\newtheorem{theorem}{Theorem}[section]
\newtheorem{lemma}[theorem]{Lemma}
\theoremstyle{definition}
\newtheorem{definition}[theorem]{Definition}
\newtheorem{example}[theorem]{Example}
\theoremstyle{remark}
\newtheorem{remark}[theorem]{Remark}
\numberwithin{equation}{section}
\theoremstyle{plain}
\newtheorem{acknowledgement}{Acknowledgement}
\newtheorem{axiom}{Axiom}
\newtheorem{conjecture}{Conjecture}
\newtheorem{corollary}{Corollary}
\newtheorem{exercise}{Exercise}
\newtheorem{proposition}{Proposition}
\chardef\@x10\chardef\@xv60
\def\tcitime{
\def\@time{%
  \@minute\time\@hour\@minute\divide\@hour\@xv
  \ifnum\@hour<\@x 0\fi\the\@hour:%
  \multiply\@hour\@xv\advance\@minute-\@hour
  \ifnum\@minute<\@x 0\fi\the\@minute
  }}%
\def\x@hyperref#1#2#3{%
   \catcode`\~ = 12
   \catcode`\$ = 12
   \catcode`\_ = 12
   \catcode`\# = 12
   \catcode`\& = 12
   \y@hyperref{#1}{#2}{#3}%
}
\def\y@hyperref#1#2#3#4{%
   #2\ref{#4}#3
   \catcode`\~ = 13
   \catcode`\$ = 3
   \catcode`\_ = 8
   \catcode`\# = 6
   \catcode`\& = 4
}
\def\QCTOpt[#1]#2{%
  \def\QCTOptB{#1}
  \def\QCTOptA{#2}
}
\def\QCTNOpt#1{%
  \def\QCTOptA{#1}
  \let\QCTOptB\empty
}
\def\Qct{%
  \@ifnextchar[{%
    \QCTOpt}{\QCTNOpt}
}
\def\QCBOpt[#1]#2{%
  \def\QCBOptB{#1}%
  \def\QCBOptA{#2}%
}
\def\QCBNOpt#1{%
  \def\QCBOptA{#1}%
  \let\QCBOptB\empty
}
\def\Qcb{%
  \@ifnextchar[{%
    \QCBOpt}{\QCBNOpt}%
}
\def\PrepCapArgs{%
  \ifx\QCBOptA\empty
    \ifx\QCTOptA\empty
      {}%
    \else
      \ifx\QCTOptB\empty
        {\QCTOptA}%
      \else
        [\QCTOptB]{\QCTOptA}%
      \fi
    \fi
  \else
    \ifx\QCBOptA\empty
      {}%
    \else
      \ifx\QCBOptB\empty
        {\QCBOptA}%
      \else
        [\QCBOptB]{\QCBOptA}%
      \fi
    \fi
  \fi
}
\def\GRAPHICSPS#1{%
 \ifcase\GRAPHICSTYPE
   \special{ps: #1}%
 \or
   \special{language "PS", include "#1"}%
 \fi
}%
\def\graffile#1#2#3#4{%
    \bgroup
	   \@inlabelfalse
       \leavevmode
       \@ifundefined{bbl@deactivate}{\def~{\string~}}{\activesoff}%
        \raise -#4 \BOXTHEFRAME{%
           \hbox to #2{\raise #3\hbox to #2{\null #1\hfil}}}%
    \egroup
}%
\def\draftbox#1#2#3#4{%
 \leavevmode\raise -#4 \hbox{%
  \frame{\rlap{\protect\tiny #1}\hbox to #2%
   {\vrule height#3 width\z@ depth\z@\hfil}%
  }%
 }%
}%
\let\nographics=\@msidraft
\newif\ifwasdraft
\def\GRAPHIC#1#2#3#4#5{%
   \ifnum\@msidraft=\@ne\draftbox{#2}{#3}{#4}{#5}%
   \else\graffile{#1}{#3}{#4}{#5}%
   \fi
}
\def\addtoLaTeXparams#1{%
    \edef\LaTeXparams{\LaTeXparams #1}}%
\newif\ifBoxFrame \BoxFramefalse
\newif\ifOverFrame \OverFramefalse
\newif\ifUnderFrame \UnderFramefalse
\def\BOXTHEFRAME#1{%
   \hbox{%
      \ifBoxFrame
         \frame{#1}%
      \else
         {#1}%
      \fi
   }%
}
\def\doFRAMEparams#1{\BoxFramefalse\OverFramefalse\UnderFramefalse\readFRAMEparams#1\end}%
\def\readFRAMEparams#1{%
 \ifx#1\end%
  \let\next=\relax
  \else
  \ifx#1i\dispkind=\z@\fi
  \ifx#1d\dispkind=\@ne\fi
  \ifx#1f\dispkind=\tw@\fi
  \ifx#1t\addtoLaTeXparams{t}\fi
  \ifx#1b\addtoLaTeXparams{b}\fi
  \ifx#1p\addtoLaTeXparams{p}\fi
  \ifx#1h\addtoLaTeXparams{h}\fi
  \ifx#1X\BoxFrametrue\fi
  \ifx#1O\OverFrametrue\fi
  \ifx#1U\UnderFrametrue\fi
  \ifx#1w
    \ifnum\@msidraft=1\wasdrafttrue\else\wasdraftfalse\fi
    \@msidraft=\@ne
  \fi
  \let\next=\readFRAMEparams
  \fi
 \next
 }%
\def\IFRAME#1#2#3#4#5#6{%
      \bgroup
      \let\QCTOptA\empty
      \let\QCTOptB\empty
      \let\QCBOptA\empty
      \let\QCBOptB\empty
      #6%
      \parindent=0pt
      \leftskip=0pt
      \rightskip=0pt
      \setbox0=\hbox{\QCBOptA}%
      \@tempdima=#1\relax
      \ifOverFrame
          \typeout{This is not implemented yet}%
          \show\HELP
      \else
         \ifdim\wd0>\@tempdima
            \advance\@tempdima by \@tempdima
            \ifdim\wd0 >\@tempdima
               \setbox1 =\vbox{%
                  \unskip\hbox to \@tempdima{\hfill\GRAPHIC{#5}{#4}{#1}{#2}{#3}\hfill}%
                  \unskip\hbox to \@tempdima{\parbox[b]{\@tempdima}{\QCBOptA}}%
               }%
               \wd1=\@tempdima
            \else
               \textwidth=\wd0
               \setbox1 =\vbox{%
                 \noindent\hbox to \wd0{\hfill\GRAPHIC{#5}{#4}{#1}{#2}{#3}\hfill}\\%
                 \noindent\hbox{\QCBOptA}%
               }%
               \wd1=\wd0
            \fi
         \else
            \ifdim\wd0>0pt
              \hsize=\@tempdima
              \setbox1=\vbox{%
                \unskip\GRAPHIC{#5}{#4}{#1}{#2}{0pt}%
                \break
                \unskip\hbox to \@tempdima{\hfill \QCBOptA\hfill}%
              }%
              \wd1=\@tempdima
           \else
              \hsize=\@tempdima
              \setbox1=\vbox{%
                \unskip\GRAPHIC{#5}{#4}{#1}{#2}{0pt}%
              }%
              \wd1=\@tempdima
           \fi
         \fi
         \@tempdimb=\ht1
         \advance\@tempdimb by -#2
         \advance\@tempdimb by #3
         \leavevmode
         \raise -\@tempdimb \hbox{\box1}%
      \fi
      \egroup%
}%
\def\DFRAME#1#2#3#4#5{%
  \hfil\break
  \bgroup
     \leftskip\@flushglue
	 \rightskip\@flushglue
	 \parindent\z@
	 \parfillskip\z@skip
     \let\QCTOptA\empty
     \let\QCTOptB\empty
     \let\QCBOptA\empty
     \let\QCBOptB\empty
	 \vbox\bgroup
        \ifOverFrame 
           #5\QCTOptA\par
        \fi
        \GRAPHIC{#4}{#3}{#1}{#2}{\z@}%
        \ifUnderFrame 
           \break#5\QCBOptA
        \fi
	 \egroup
   \egroup
   \break
}%
\def\FFRAME#1#2#3#4#5#6#7{%
  \@ifundefined{floatstyle}
    {
     \begin{figure}[#1]%
    }
    {
	 \ifx#1h
      \begin{figure}[H]%
	 \else
      \begin{figure}[#1]%
	 \fi
	}
  \let\QCTOptA\empty
  \let\QCTOptB\empty
  \let\QCBOptA\empty
  \let\QCBOptB\empty
  \ifOverFrame
    #4
    \ifx\QCTOptA\empty
    \else
      \ifx\QCTOptB\empty
        \caption{\QCTOptA}%
      \else
        \caption[\QCTOptB]{\QCTOptA}%
      \fi
    \fi
    \ifUnderFrame\else
      \label{#5}%
    \fi
  \else
    \UnderFrametrue%
  \fi
  \begin{center}\GRAPHIC{#7}{#6}{#2}{#3}{\z@}\end{center}%
  \ifUnderFrame
    #4
    \ifx\QCBOptA\empty
      \caption{}%
    \else
      \ifx\QCBOptB\empty
        \caption{\QCBOptA}%
      \else
        \caption[\QCBOptB]{\QCBOptA}%
      \fi
    \fi
    \label{#5}%
  \fi
  \end{figure}%
 }%
\def\makeactives{
  \catcode`\"=\active
  \catcode`\;=\active
  \catcode`\:=\active
  \catcode`\'=\active
  \catcode`\~=\active
}
   \gdef\activesoff{%
      \def"{\string"}%
      \def;{\string;}%
      \def:{\string:}%
      \def'{\string'}%
      \def~{\string~}%
    }
\def\FRAME#1#2#3#4#5#6#7#8{%
 \bgroup
 \ifnum\@msidraft=\@ne
   \wasdrafttrue
 \else
   \wasdraftfalse%
 \fi
 \def\LaTeXparams{}%
 \dispkind=\z@
 \def\LaTeXparams{}%
 \doFRAMEparams{#1}%
 \ifnum\dispkind=\z@\IFRAME{#2}{#3}{#4}{#7}{#8}{#5}\else
  \ifnum\dispkind=\@ne\DFRAME{#2}{#3}{#7}{#8}{#5}\else
   \ifnum\dispkind=\tw@
    \edef\@tempa{\noexpand\FFRAME{\LaTeXparams}}%
    \@tempa{#2}{#3}{#5}{#6}{#7}{#8}%
    \fi
   \fi
  \fi
  \ifwasdraft\@msidraft=1\else\@msidraft=0\fi{}%
  \egroup
 }%
\def\TEXUX#1{"texux"}
\long\def\QQQ#1#2{%
     \long\expandafter\def\csname#1\endcsname{#2}}%
\long\def\QQA#1#2{}%
\def\QTR#1#2{{\csname#1\endcsname {#2}}}%
\def\EXPAND#1[#2]#3{}%
\def\NOEXPAND#1[#2]#3{}%
\def\LaTeXparent#1{}%
\def\ChildStyles#1{}%
\def\ChildDefaults#1{}%
\def\QTagDef#1#2#3{}%
  \providecommand{\UNICODE}[2][]{\protect\rule{.1in}{.1in}}
  \providecommand{\U}[1]{\protect\rule{.1in}{.1in}}
\def\QQfnmark#1{\footnotemark}
 \def\abstract{%
  \if@twocolumn
   \section*{Abstract (Not appropriate in this style!)}%
   \else \small 
   \begin{center}{\bf Abstract\vspace{-.5em}\vspace{\z@}}\end{center}%
   \quotation 
   \fi
  }%
   \def\registered{\relax\ifmmode{}\r@gistered
                    \else$\m@th\r@gistered$\fi}%
 \def\r@gistered{^{\ooalign
  {\hfil\raise.07ex\hbox{$\scriptstyle\rm\text{R}$}\hfil\crcr
  \mathhexbox20D}}}}{}%
\newdimen\theight
\def\newfmtname{LaTeX2e}
  \DeclareOldFontCommand{\rm}{\normalfont\rmfamily}{\mathrm}
  \DeclareOldFontCommand{\sf}{\normalfont\sffamily}{\mathsf}
  \DeclareOldFontCommand{\tt}{\normalfont\ttfamily}{\mathtt}
  \DeclareOldFontCommand{\bf}{\normalfont\bfseries}{\mathbf}
  \DeclareOldFontCommand{\it}{\normalfont\itshape}{\mathit}
  \DeclareOldFontCommand{\sl}{\normalfont\slshape}{\@nomath\sl}
  \DeclareOldFontCommand{\sc}{\normalfont\scshape}{\@nomath\sc}
\def\alpha{{\Greekmath 010B}}%
\def\beta{{\Greekmath 010C}}%
\def\gamma{{\Greekmath 010D}}%
\def\delta{{\Greekmath 010E}}%
\def\epsilon{{\Greekmath 010F}}%
\def\zeta{{\Greekmath 0110}}%
\def\eta{{\Greekmath 0111}}%
\def\theta{{\Greekmath 0112}}%
\def\iota{{\Greekmath 0113}}%
\def\kappa{{\Greekmath 0114}}%
\def\lambda{{\Greekmath 0115}}%
\def\mu{{\Greekmath 0116}}%
\def\nu{{\Greekmath 0117}}%
\def\xi{{\Greekmath 0118}}%
\def\pi{{\Greekmath 0119}}%
\def\rho{{\Greekmath 011A}}%
\def\sigma{{\Greekmath 011B}}%
\def\tau{{\Greekmath 011C}}%
\def\upsilon{{\Greekmath 011D}}%
\def\phi{{\Greekmath 011E}}%
\def\chi{{\Greekmath 011F}}%
\def\psi{{\Greekmath 0120}}%
\def\omega{{\Greekmath 0121}}%
\def\varepsilon{{\Greekmath 0122}}%
\def\vartheta{{\Greekmath 0123}}%
\def\varpi{{\Greekmath 0124}}%
\def\varrho{{\Greekmath 0125}}%
\def\varsigma{{\Greekmath 0126}}%
\def\varphi{{\Greekmath 0127}}%
\def\nabla{{\Greekmath 0272}}
\def\FindBoldGroup{%
   {\setbox0=\hbox{$\mathbf{x\global\edef\theboldgroup{\the\mathgroup}}$}}%
}
\def\Greekmath#1#2#3#4{%
    \if@compatibility
        \ifnum\mathgroup=\symbold
           \mathchoice{\mbox{\boldmath$\displaystyle\mathchar"#1#2#3#4$}}%
                      {\mbox{\boldmath$\textstyle\mathchar"#1#2#3#4$}}%
                      {\mbox{\boldmath$\scriptstyle\mathchar"#1#2#3#4$}}%
                      {\mbox{\boldmath$\scriptscriptstyle\mathchar"#1#2#3#4$}}%
        \else
           \mathchar"#1#2#3#4%
        \fi 
    \else 
        \FindBoldGroup
        \ifnum\mathgroup=\theboldgroup 
           \mathchoice{\mbox{\boldmath$\displaystyle\mathchar"#1#2#3#4$}}%
                      {\mbox{\boldmath$\textstyle\mathchar"#1#2#3#4$}}%
                      {\mbox{\boldmath$\scriptstyle\mathchar"#1#2#3#4$}}%
                      {\mbox{\boldmath$\scriptscriptstyle\mathchar"#1#2#3#4$}}%
        \else
           \mathchar"#1#2#3#4%
        \fi     	    
	  \fi}
\newif\ifGreekBold  \GreekBoldfalse
\let\SAVEPBF=\pbf
\def\pbf{\GreekBoldtrue\SAVEPBF}%
  \newcounter{equationnumber}  
  \def\mathletters{%
     \addtocounter{equation}{1}
     \edef\@currentlabel{\theequation}%
     \setcounter{equationnumber}{\c@equation}
     \setcounter{equation}{0}%
     \edef\theequation{\@currentlabel\noexpand\alph{equation}}%
  }
    \def\BibTeX{{\rm B\kern-.05em{\sc i\kern-.025em b}\kern-.08em
                 T\kern-.1667em\lower.7ex\hbox{E}\kern-.125emX}}}{}%
\def\AmS{{\protect\usefont{OMS}{cmsy}{m}{n}%
                A\kern-.1667em\lower.5ex\hbox{M}\kern-.125emS}}}{}%
\def\@@eqncr{\let\@tempa\relax
    \ifcase\@eqcnt \def\@tempa{& & &}\or \def\@tempa{& &}%
      \else \def\@tempa{&}\fi
     \@tempa
     \if@eqnsw
        \iftag@
           \@taggnum
        \else
           \@eqnnum\stepcounter{equation}%
        \fi
     \fi
     \global\tag@false
     \global\@eqnswtrue
     \global\@eqcnt\z@\cr}
\def\TCItag{\@ifnextchar*{\@TCItagstar}{\@TCItag}}
\def\@TCItag#1{%
    \global\tag@true
    \global\def\@taggnum{(#1)}}
\def\@TCItagstar*#1{%
    \global\tag@true
    \global\def\@taggnum{#1}}
\begin{document}
\title[$\ast $-homogeneous ideals]{On $\ast $-homogeneous ideals}
\author{Muhammad Zafrullah}
\address{Department of Mathematics, Idaho State University,\\
Pocatello, Idaho 83209 USA}
\email{mzafrullah@usa.net}
\urladdr{http://www.lohar.com}
\subjclass[2010]{13A15; Secondary 13G05; 06F20}
\date{July 27, 2001}
\keywords{star operation, Monoid, pre-Riesz Monoid, $\ast $-homogeneous $%
\ast $-potent domain}
\dedicatory{Dedicated to the memory of True Friendship}

\begin{abstract}
Let $\ast $ be a star operation of finite character. Call a $\ast $-ideal $I$
of finite type a $\ast $-homogeneous ideal if $I$ is contained in a unique
maximal $\ast $-ideal $M=M(I).$ A maximal $\ast $-ideal that contains a $%
\ast $-homogeneous ideal is called potent and the same name bears a domain
all of whose maximal $\ast $-ideals are potent. One among the various aims
of this article is to indicate what makes a $\ast $-ideal of finite type a $%
\ast $-homogeneous ideal, where and how we can find one, what they can do
and how this notion came to be. We also prove some results of current
interest in ring theory using some ideas from this author's joint work in 
\cite{LYZ 2014} on partially ordered monoids. We characterize when a
commutative Riesz monoid generates a Riesz group
\end{abstract}

\maketitle

\section{Introduction}

Let $\ast $ be a finite character star operation defined on an integral
domain $D$ throughout. (A working introduction to the star operations, and
the reason for using them, will follow.) Call a nonzero $\ast $-ideal of
finite type a $\ast $-homogeneous ideal, if $I$ is contained in a unique
maximal $\ast $-ideal. According to proposition $1$ of \cite{AZ 2019},
associated with each $\ast $-homogeneous ideal $I$ is a unique $\ast $%
-maximal ideal $M(I)=\{x\in D|$ $(x,I)^{\ast }\neq D\}.$ The notion of a $%
\ast $-homogeneous ideal has figured prominently in describing unique
factorization of ideals and elements in \cite{AZ 2019} and it seems
important to indicate some other properties and uses of this notion and
notions related to it. Call a $\ast $-maximal ideal $M$ $\ast $-potent if $M$
contains a $\ast $-homogeneous ideal and call a domain $D$ $\ast $-potent if
each of the $\ast $-maximal ideals of $D$ is $\ast $-potent. The aim of this
article is to study some properties of $\ast $-homogeneous ideals and of $%
\ast $-potent domains. We show for instance that while in a $\ast $-potent
domain every proper $\ast $-ideal of finite type is contained in a $\ast $%
-homogeneous ideal, the converse may not be true. We shall also indicate how
these concepts can be put to use. Before we elaborate on that, it seems
pertinent to give an idea of our main tool, the star operations. Indeed, the
rest of what we plan to prove will be included in the plan of the paper
after the introduction to star operations.

\subsection{Introduction to star operations}

Let $D$ be an integral domain with quotient field $K$ $,$ throughout. Let $%
F(D)$ be the set of nonzero fractional ideals of $D,$ and let $f(D)=\{A\in
F(D)|A$ is finitely generated\}. A star operation $\ast $ on $D$ is a
closure operation on $F(D)$ that satisfies $D^{\ast }=D$ and $(xA)^{\ast
}=xA^{\ast }$ for $A\in F(D)$ and $x\in K=K\backslash \{0\}$. With $\ast $
we can associate a new star-operation $\ast _{s}$ given by $A\mapsto A^{\ast
_{s}}=\cup \{B^{\ast }|B\subseteq A,B\in f(D)\}$ for each $A\in F(D).$ We
say that $\ast $ has finite character if $\ast =\ast _{s}$. Three important
star-operations are the $d$-operation $A\mapsto A_{d}=A$, the $v$-operation $%
A\mapsto A_{v}=(A^{-1})^{-1}$ $=\cap \{Dx|Dx\supseteq A,x\in K\}$ where $%
A^{-1}=\{x\in K:xA\subseteq D\}$ and the $t$-operation $t=v_{s}.$ Here $d$
and $t$ have finite character. A fractional ideal $A$ is a $\ast $-ideal if $%
A=A^{\ast }$and a $\ast $-ideal $A$ is of finite type if $A=B^{\ast }$ for
some $B\in f(D).$ If $\ast $ has finite character and $A^{\ast }$ is of
finite type, then $A^{\ast }=B^{\ast }$ for some $B\in f(D),$ $B\subseteq A.$
A fractional ideal $A\in F(D)$ is $\ast $-invertible if there exists a $B\in
F(D)$ with $(AB)^{\ast }=D$; in this case we can take $B=A^{-1}$. For any $%
\ast $-invertible $A\in F(D),A^{\ast }=A_{v}$. If $\ast $ has finite
character and $A$ is $\ast $-invertible, then $A^{\ast }$ is a finite type $%
\ast $-ideal and $A^{\ast }=A_{t}$ . Given two fractional ideals $A,B\in
F(D),~(AB)^{\ast }$ denotes their $\ast $-product. Note that $(AB)^{\ast
}=(A^{\ast }B)^{\ast }=(A^{\ast }B^{\ast })^{\ast }$. Given two star
operations $\ast _{1}$ and $\ast _{2}$ on $D$, we write $\ast _{1}\leq \ast
_{2}$ if $A^{\ast _{1}}\subseteq A^{\ast _{2}}$ for all $A\in F(D)$. So $%
\ast _{1}\leq \ast _{2}\Leftrightarrow (A^{\ast _{1}})^{\ast _{2}}=A^{\ast
_{2}}\Leftrightarrow (A^{\ast _{2}})^{\ast _{1}}=A^{\ast _{2}}$ for all $%
A\in F(D).$

Indeed for any finite character star-operation $\ast $ on $D$ we have $d\leq
\ast \leq t$ . For a quick introduction to star-operations, the reader is
referred to \cite[Sections 32, 34]{Gil 1972} or \cite{Zaf 2000}, for a quick
review. For a more detailed treatment see Jaffard \cite{Jaf 1962}. A keenly
interested reader may also look up \cite{H-K 1998}. These days star
operations are being used to define analogues of various concepts. The trick
is to take a concept, e.g., a PID and look for what the concept would be if
we require that for every nonzero ideal $I,$ $I^{\ast }$ is principal and
voila! You have several concepts parallel to that of a PID. Of these $t$-PID
turns out to be a UFD. Similarly a $v$-PID is a completely integrally closed
GCD domain of a certain kind. A $t$-Dedekind domain, on the other hand is a
Krull domain and a $v$-Dedekind domain is a domain with the property that
for each nonzero ideal $A$ we have $A_{v}$ invertible. So when we prove a
result about a general star operation $\ast $ the result gets proved for all
the different operations, $d,t,v$ etc. Apart from the above, any terminology
that is not mentioned above will be introduced at the point of entry of the
concept.

Suppose that $\ast $ is a finite character star-operation on $D$. Then a
proper $\ast $-ideal is contained in a maximal $\ast $-ideal and a maximal $%
\ast $-ideal is prime. We denote the set of maximal $\ast $-ideals of $D$ by 
$\ast $-$Max(D)$. We have $D=\cap D_{P}$ where $P$ ranges over $\ast $-$%
Max(D).$From this point on we shall use $\ast $ to denote a finite type star
operation. Call $D$ of finite $\ast $-character if for each nonzero non unit 
$x$ of $D,$ $x$ belongs to at most a finite number of maximal $\ast $%
-ideals. Apart from the introduction there are three sections in this paper.
In section \ref{S1} we talk about $\ast $-homogeneous ideals, and $\ast $%
-potent domains. We characterize $\ast $-potent domains in this section,
show that if $D$ is of finite $\ast $-character then $D$ must be potent,
examine an error in a paper of the author, \cite{DZ 2010}, in characterizing
domains of finite $\ast $-character and characterize domains of finite $\ast 
$-character and give a new proof. In section \ref{S2}, we show how creating
a suitable definition of a $\ast $-homogeneous ideal will create theory of
unique factorization of ideals. Calling an element $r\in D$ $\ast $-f-rigid (%
$\ast $-factorial rigid) if $rD$ is a $\ast $-homogeneous ideal such that
every proper $\ast $-homogeneous ideal containing $r$ is principal we call a 
$\ast $-potent maximal $\ast $-ideal $M$ (resp., domain $D$) $\ast $%
-f-potent if $M$ (resp., every maximal $\ast $-ideal of $D$) contains a $%
\ast $-f-rigid element and show that over a $\ast $-f-potent domain a
primitive polynomial $f$ is super primitive i.e. if $A_{f},$ the content of $%
f,$ is such that the generators of $f$ have no non unit common factor then $%
(A_{f})_{v}=D$ and indicate how to construct atomless non-pre-Schreier
domain. In this section we offer a seamless patch to remove an error in the
proof of result in a paper by Kang \cite{Kan 1989} and show that $D$ is a $t$%
-superpotent if and only if $D[X]_{S}$ is $t$-f-potent, where $X$ is an
indeterminate and $S=\{f\in \lbrack X]|(A_{f})_{v}=D\}.$ We also show, by
way of constructing more examples, in this section that if $L$ is an
extension of $K$ the quotient field of $D$ and $X$ an indeterminate over $D$
then $D$ $t$-f-potent if and only if $D+XL[X]$ is. Finally in section \ref%
{S3} we define a pre-Riesz monoid as a p.o. monoid $M$ if for any $%
x_{1},x_{2},...,x_{n}\in M\backslash \{0\}$ $glb(x_{1},x_{2},...,x_{n})=0$
or there is $r\in M$ with $0<r\leq x_{1},x_{2},...,x_{n}$ and indicate that
the monoid of $\ast $-ideals of finite type is a pre-Riesz monoid and, of
course we indicate how to use this information.

\section{\label{S1}$\ast $-potent domains and $\ast $-homogeneous ideals}

Work on this paper started in earnest with the somewhat simple observation
that if $D$ is $\ast $-potent then every nonzero non unit $x\in D$ is
contained in some $\ast $-homogeneous ideal. The proof goes as follows:
Because $x$ is a nonzero non unit, $x$ must be contained in some maximal $%
\ast $-ideal $M.$ Now as $D$ is $\ast $-potent $M=M(I)$ for some $\ast $%
-homogeneous ideal $I.$ Consider $J=(I,x)^{\ast }$ and note that $%
(I,x)^{\ast }\neq D$ because $x\in M$ and $(I,x)^{\ast }$ is contained in a
unique maximal $\ast $-ideal and this makes $J$ a $\ast $-homogeneous ideal.

This leads to the question: If $D$ is a domain with a finite character star
operation $\ast $ defined on it such that every nonzero non unit $x$ of $D$
is contained in some $\ast $-homogeneous ideal $I$ of $D,$ must $D$ be $\ast 
$-potent?

This question came up in a different guise as: when is a certain type of
domain $\ast _{s}$-potent for a general star operation $\ast $ in \cite{YZ
2011} and sort of settled in a tentative fashion in Proposition 5.12 of \cite%
{YZ 2011} saying, in the general terms being used here, that: Suppose that $%
D $ is a domain with a finite character $\ast $-operation defined on it.
Then $D$ is $\ast $-potent provided (1) every nonzero non unit $x$ of $D$ is
contained in some $\ast $-homogeneous ideal $I$ of $D$ and (2) for $M,$ $%
M_{\alpha }\in $ $\ast $-$max(D)$, $M\subseteq \cup M_{\alpha }$ implies $%
M=M_{\alpha }$ for some $\alpha .$

The proof could be something like: By (1) for every nonzero non unit $x$
there is a $\ast $-homogeneous ideal $I_{x}$ containing $x$ and so $x\in
M(I_{x})$. So $M\subseteq \cup M(I_{x})$ and by (2) $M$ must be equal to $%
M(I_{x})$ for some $x.$

Thus we have the following statement.

\begin{theorem}
\label{Theorem 1}. Let $\ast $ be a finite character star operation defined
on $D.$ Then $D$ is $\ast $-potent if $D$ satisfies the following: (1)every
nonzero non unit $x$ of $D$ is contained in some $\ast $-homogeneous ideal $%
I $ of $D$ and (2) For $M,$ $M_{\alpha }\in $ $\ast $-$max(D)$, $M\subseteq
\cup M_{\alpha }$ implies $M=M_{\alpha }$ for some $\alpha .$
\end{theorem}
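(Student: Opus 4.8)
The plan is to formalize the sketch given just above the statement, filling in the two small gaps it glosses over: that $M(I_x)$ is genuinely a maximal $\ast$-ideal, and that $x$ actually lies in it. So fix an arbitrary $M\in \ast$-$Max(D)$; the task is to exhibit a $\ast$-homogeneous ideal contained in $M$. For each nonzero non unit $x\in M$, hypothesis (1) furnishes a $\ast$-homogeneous ideal $I_{x}$ with $x\in I_{x}$, and by Proposition 1 of \cite{AZ 2019} the set $M(I_{x})=\{y\in D\mid (y,I_{x})^{\ast }\neq D\}$ is the unique maximal $\ast$-ideal containing $I_{x}$; in particular it is a maximal $\ast$-ideal.

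The next step is to check that $x\in M(I_{x})$. Since $I_{x}$ is a $\ast$-homogeneous ideal it is contained in a maximal $\ast$-ideal and hence is proper, so $I_{x}=I_{x}^{\ast }\neq D$. Because $x\in I_{x}$ we get $(x,I_{x})^{\ast }=I_{x}^{\ast }=I_{x}\neq D$, i.e. $x\in M(I_{x})$ by the very definition of $M(I_{x})$. Consequently every nonzero non unit of $M$ lies in $\bigcup _{x}M(I_{x})$, the union taken over the nonzero non units $x$ of $M$; since $0$ lies in every ideal, this gives $M\subseteq \bigcup _{x}M(I_{x})$, a cover of the maximal $\ast$-ideal $M$ by maximal $\ast$-ideals.

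Now hypothesis (2) applies verbatim — it is precisely an avoidance principle for such covers — and yields $M=M(I_{x_{0}})$ for some nonzero non unit $x_{0}\in M$. Then $M=M(I_{x_{0}})$ contains the $\ast$-homogeneous ideal $I_{x_{0}}$, so $M$ is $\ast$-potent. As $M\in \ast$-$Max(D)$ was arbitrary, every maximal $\ast$-ideal of $D$ is $\ast$-potent, which is exactly the assertion that $D$ is $\ast$-potent.

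I do not expect a genuine obstacle here: the argument is an assembly of the definitions together with the cited description of $M(I)$, and condition (2) is imposed precisely so that the (possibly infinite) covering $M\subseteq \bigcup _{x}M(I_{x})$ can be collapsed to a single term. The only point demanding a moment's care is the verification that $x\in M(I_{x})$, which rests on $I_{x}$ being a \emph{proper} $\ast$-ideal of finite type; everything else is bookkeeping.
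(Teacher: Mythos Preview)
Your proof is correct and follows exactly the route the paper sketches just before the theorem: for each nonzero non unit $x\in M$ produce a $\ast$-homogeneous ideal $I_{x}$ containing $x$, observe $x\in M(I_{x})$, cover $M\subseteq \bigcup_{x}M(I_{x})$, and invoke condition (2). You have simply fleshed out the two points the paper leaves implicit, namely that $M(I_{x})$ is a maximal $\ast$-ideal and that $x$ lies in it; both verifications are fine (and in fact $x\in I_{x}\subseteq M(I_{x})$ gives the latter even more directly).
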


Condition (2) in the statement of Theorem \ref{Theorem 1} has had to face a
lot of doubt from me, in that, is it really necessary or perhaps can it be
relaxed a little?

The following example shows that condition (2) or some form of it is here to
stay.

It is well known that the ring $\mathcal{E}$ of entire functions is a Bezout
domain \cite[Exercise 18, p 147]{Gil 1972}. It is easy to check that a
principal prime in a Bezout domain is maximal. Now we know that a zero of an
entire function determines a principal prime in $\mathcal{E}$ and that the
set of zeros of a nontrivial entire function is discrete, including
multiplicities, the multiplicity of a zero of an entire function is a
positive integer \cite[Theorem 6]{Hel 1940}. Thus each nonzero non unit of $%
\mathcal{E}$ is expressible as a countable product of finite powers of
distinct principal primes. For the identity star operation $d,$ certainly
defined on $\mathcal{E},$ only an ideal $I$ generated by a power of a
principal prime can be $d$-homogeneous. For if $I$ is $d$-homogeneous, then $%
I=(x_{1},...,x_{n})^{d}=x\mathcal{E}$ a principal ideal and hence a
countable product of distinct primes. Now $I$ cannot be in a unique non
principal prime for then $I$ would have to be a countably infinite product
of principal primes and so in infinitely many principal prime ideals, which
are maximal. So $I$ can only belong to a unique principal prime and has to
be a finite prime power. To see that $\mathcal{E}$ falls foul of Theorem \ref%
{Theorem 1}, let's put $S=\{p|p$ a prime element in $\mathcal{E}\}.$ Then
for each non principal prime $P$ of $\mathcal{E}$ we have $P\subseteq \cup
_{p\in S}p\mathcal{E}$ because each element of $P$ is divisible by some
member(s) of $S.$ (I have corresponded with Prof. Evan Houston about the
above material and I gratefully acknowledge that.)

Once we know more about $\ast $-homogeneous ideals we would know that rings
do not behave in the same manner as groups do. To get an idea of how groups
behave and what is the connection the reader may look up \cite{YZ 2011}.
Briefly, the notion of a $\ast $-homogeneous ideal arose from the notion of
a basic element of a lattice ordered group $G$ (defined as $b>0$ in $G$ such
that $(0,b]$ is a chain). A basis of $G,$ if it exists, is a maximal set of
mutually disjoint strictly positive basic elements of $G$. According to \cite%
{Con 1961} a l.o. group $G$ has a basis if and only if every strictly
positive element of $G$ exceeds a basic element. So if we were to take $D$
being potent as having a basis (every proper $\ast $-ideal of finite type
being contained in a $\ast $-homogeneous ideal) then every proper $\ast $%
-ideal of finite type being contained in a $\ast $-homogeneous ideal does
not imply that $D$ is potent.

We next tackle the question of where $\ast $-homogeneous ideals can be
found. Call $D$ of finite $\ast $-character if every nonzero non unit of $D$
is contained in at most a finite number of maximal $\ast $-ideals. Again, a
domain of finite $\ast $-character could be a domain of finite character
(every nonzero non unit belongs to at most a finite number of maximal
ideals) such as an h-local domain or a semilocal domain or a PID or a domain
of finite $t$-character such as a Krull domain.

\begin{proposition}
\label{Proposition X1} A domain $D$ of finite $\ast $-character is $\ast $%
-potent.
\end{proposition}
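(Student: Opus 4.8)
The plan is to check, for an arbitrary maximal $\ast $-ideal $M$ of $D$, that $M$ contains a $\ast $-homogeneous ideal; this is exactly what is needed for $\ast $-potency. We may assume $D$ is not a field (otherwise there are no proper $\ast $-ideals, hence no maximal $\ast $-ideals, and the statement is vacuous), so $M\neq (0)$ and we may choose a nonzero non unit $x\in M$. The idea is that $xD$ is already a principal, hence finite type, $\ast $-ideal sitting inside $M$; the only obstruction to its being $\ast $-homogeneous is that $x$ may lie in several maximal $\ast $-ideals, and finite $\ast $-character is precisely what lets us trim it down.

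So, by the finite $\ast $-character hypothesis, $x$ belongs to only finitely many maximal $\ast $-ideals, say $M=M_{1},M_{2},\dots ,M_{n}$. For each $i\geq 2$, the ideals $M$ and $M_{i}$ are distinct maximal $\ast $-ideals, hence incomparable, so we may pick $y_{i}\in M\setminus M_{i}$. Put $I=(x,y_{2},\dots ,y_{n})^{\ast }$. Then $I$ is a nonzero $\ast $-ideal of finite type (it is the $\ast $-closure of the finitely generated ideal $(x,y_{2},\dots ,y_{n})$), and since every generator lies in $M$ we have $I\subseteq M$, so $I$ is proper.

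It remains to see that $I$ is contained in a \emph{unique} maximal $\ast $-ideal. Any maximal $\ast $-ideal containing $I$ contains $x$, hence is one of $M_{1},\dots ,M_{n}$; and for each $i\geq 2$ we have $y_{i}\in I$ while $y_{i}\notin M_{i}$, so $M_{i}\not\supseteq I$. Therefore $M=M_{1}$ is the only maximal $\ast $-ideal containing $I$, which shows $I$ is $\ast $-homogeneous with $M(I)=M$, so $M$ is $\ast $-potent. As $M$ was arbitrary, $D$ is $\ast $-potent.

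I do not expect a genuine obstacle here: the argument is elementary once one recognizes that finite $\ast $-character converts "principal ideal through $M$" into "finite type $\ast $-ideal lying only in $M$". The only points that require a word of justification are that distinct maximal $\ast $-ideals are incomparable (immediate from maximality, giving the elements $y_{i}$) and that a proper finite type $\ast $-ideal is contained in some maximal $\ast $-ideal (recorded in the introduction), the latter guaranteeing that "unique maximal $\ast $-ideal" is the correct condition being verified and that $I$ is not contained in \emph{zero} maximal $\ast $-ideals.
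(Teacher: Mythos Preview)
Your proof is correct and follows essentially the same approach as the paper: pick a nonzero $x\in M$, list the finitely many maximal $\ast$-ideals $M=M_{1},M_{2},\dots ,M_{n}$ containing $x$, adjoin elements $y_{i}\in M\setminus M_{i}$ for $i\geq 2$, and observe that the resulting finite type $\ast$-ideal lies in $M$ alone. The paper's argument is the same up to indexing conventions.
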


\begin{proof}
Let $M$ be a maximal $\ast $-ideal of $D$ and let $x$ be a nonzero element
of $M.$ If $x$ belongs to no other maximal $\ast $-ideal then $xD$ is $\ast $%
-homogeneous and $M$ is potent. So let us assume that $M,$ $%
M_{1},M_{2},...,M_{n}$ is the set of all maximal $\ast $-ideals containing $%
x.$ Now consider the ideal $A=(x,x_{1},...,x_{n})$ where $x_{i}\in
M\backslash M_{i}$ for $i=1,...,n.$ Obviously $A\subseteq M$ but $%
A\nsubseteq M_{i}$ because of $x_{i}.$ Note that $A$ cannot be contained in
any maximal $\ast $-ideal other than $M,$ for if $N$ were any maximal $\ast $%
-ideal containing $A$ then $N$ would belong to $\{M,M_{1},M_{2},...,M_{n}\}$
because of $x.$ And $N$ cannot be any of the $M_{i}.$ Thus $A^{\ast }$ is a $%
\ast $-homogeneous ideal contained in $M$ and $M$ is potent. Since $M$ was
arbitrary we have the conclusion.
\end{proof}

The above proof is essentially taken from the proof for part (2) of Theorem
1.1 of \cite{ACZ 2013}.

Now how do we get a domain of finite $\ast $-character? The answer is
somewhat longish and interesting. Bazzoni conjectured in \cite{Baz 2000} and 
\cite{Baz 2001} that a Prufer domain $D$ is of finite character if every
locally principal ideal of $D$ is invertible. \cite{HMMT 2008} were the
first to verify the conjecture using partially ordered groups. Almost
simultaneously \cite{H-K 2009} proved the conjecture for $r$-Prufer monoids,
using Clifford semigroups of ideals and soon after I chimed in with a very
short paper \cite{Zaf 2010}. The ring-theoretic techniques used in this
paper not only verified the Bazzoni conjecture but also helped prove
Bazzoni-like statements for other, suitable, domains that were not
necessarily PVMDs. (Recall that $D$ is a PVMD if every $t$-ideal $A$ of
finite type of $D$ is $t$-invertible i.e. $(AA^{-1})_{t}=D$.) In the course
of verification of the conjecture I mentioned a result due to Griffin from 
\cite{Gri 1967} that says:

\begin{theorem}
\label{Theorem X2} A PVMD\ $D$ is of finite $t$-character if and only if
each $t$-invertible $t$-ideal of $D$ is contained in at most a finite number
of mutually $t$-comaximal $t$-invertible $t$-ideals of $D$.
\end{theorem}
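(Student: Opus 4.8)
The plan is to establish the two implications separately. The forward implication is elementary and does not even use the PVMD hypothesis; the converse is the substantive half, and it is there that I would exploit the valuation–theoretic structure of a PVMD. Throughout I would use freely the facts recorded above: in a PVMD every finite–type $t$-ideal is $t$-invertible, a proper $t$-ideal lies in a maximal $t$-ideal, and localizing at a maximal $t$-ideal yields a valuation domain, so that prime avoidance and approximation arguments are available in each $D_M$.

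For the forward direction, assume $D$ has finite $t$-character and let $A$ be a $t$-invertible $t$-ideal, which we may take to be proper. Pick a nonzero nonunit $a\in A$; by hypothesis $a$ lies in only finitely many maximal $t$-ideals, say $M_{1},\dots ,M_{m}$. If $\{B_{\lambda }\}$ is any family of $t$-invertible $t$-ideals with $A\subseteq B_{\lambda }\subsetneq D$ for every $\lambda $ and $(B_{\lambda }+B_{\mu })_{t}=D$ for $\lambda \neq \mu $, then each $B_{\lambda }$ sits in a maximal $t$-ideal $N_{\lambda }$, and $a\in A\subseteq B_{\lambda }\subseteq N_{\lambda }$ forces $N_{\lambda }\in \{M_{1},\dots ,M_{m}\}$; mutual $t$-comaximality forbids $N_{\lambda }=N_{\mu }$ for $\lambda \neq \mu $, so the family has at most $m$ members. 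That is the easy half.

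For the converse I would argue contrapositively: if $D$ is a PVMD that is not of finite $t$-character, fix a nonzero nonunit $x$ lying in infinitely many maximal $t$-ideals, and produce an infinite family of mutually $t$-comaximal $t$-invertible $t$-ideals each containing $xD$, contradicting the hypothesis applied to the $t$-invertible $t$-ideal $xD$. The family would be built recursively: having found proper $t$-invertible $t$-ideals $B_{1}=(x,a_{1})^{t},\dots ,B_{n}=(x,a_{n})^{t}$ that are mutually $t$-comaximal, I want a further $B_{n+1}=(x,a_{n+1})^{t}$. The requirement $(B_{n+1}+B_{i})_{t}=D$ for $i\le n$ says exactly that $a_{n+1}$ must avoid every maximal $t$-ideal containing some $(x,a_{i})$, while $B_{n+1}\subsetneq D$ asks that $a_{n+1}$ lie in some maximal $t$-ideal $M$ with $x\in M$. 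Since $x$ sits in infinitely many maximal $t$-ideals, such an $M$ exists outside any prescribed finite set of maximal $t$-ideals, and then prime avoidance (legitimate because $M$ is maximal, hence not contained in a finite union of distinct maximal $t$-ideals) yields $a_{n+1}\in M$ missing that finite set.

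The crux — and the step I expect to be the main obstacle — is keeping that "set to be avoided", namely $\bigcup _{i\le n}\{N:(x,a_{i})\subseteq N\}$, finite as the recursion proceeds: a priori each $B_{i}$ could lie in infinitely many maximal $t$-ideals, and one cannot dodge infinitely many primes with a single element. This is where the PVMD hypothesis must be used in earnest, in the spirit of the ring–theoretic treatments of the Bazzoni conjecture cited above: using the valuations of the various $D_{M}$ and an approximation argument over only the finitely many maximal $t$-ideals relevant at each stage, one should be able to extract the $B_{i}$ so that each is $t$-homogeneous (hence lies in a single maximal $t$-ideal), or at least has a controlled finite locus of maximal $t$-ideals, so that the danger set at stage $n$ stays finite and the recursion never stalls. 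I would expect the verification that inside a maximal $t$-ideal of a PVMD containing $x$ one can always find a finite–type $t$-ideal above $xD$ with suitably small prime locus to be the technical heart of the argument, and the precise point where Griffin's valuation–theoretic reasoning is indispensable.
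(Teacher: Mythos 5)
Your forward implication is fine and is exactly the easy half: a nonzero nonunit of $A$ lies in only finitely many maximal $t$-ideals, each proper $t$-ideal containing $A$ lives in one of them, and no maximal $t$-ideal can contain two $t$-comaximal ideals. The problem is the converse. Your recursion produces $B_{n+1}=(x,a_{n+1})_{t}$ by choosing $a_{n+1}$ to avoid every maximal $t$-ideal containing some earlier $B_{i}$, and you correctly observe that this only works if each $B_{i}$ has a \emph{finite} locus of maximal $t$-ideals. But the lemma you then defer to --- that inside a prescribed maximal $t$-ideal $M$ of a PVMD containing $x$ one can always find a finite-type $t$-ideal above $xD$ that is $t$-homogeneous, or at least has finite locus --- is false, so the recursion cannot be repaired along those lines. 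The ring $\mathcal{E}$ of entire functions is a Bezout domain (hence a PVMD, with $t=d$ and every maximal ideal a maximal $t$-ideal); if $M$ is a free maximal ideal of $\mathcal{E}$ and $x\in M$, then every finitely generated ideal between $xD$ and $M$ is principal, generated by a divisor of $x$ lying in $M$, hence by a function with infinitely many zeros, hence contained in infinitely many (fixed) maximal ideals. This is precisely the example discussed in the paper to show that free maximal ideals of $\mathcal{E}$ contain no homogeneous ideal. (In that example the desired conclusion still holds, because the infinitely many principal primes dividing $x$ already give an infinite mutually comaximal family; but that is a different argument, and your construction as written stalls.) No approximation argument over ``the finitely many maximal $t$-ideals relevant at each stage'' can help, because the whole difficulty is that the relevant set need not be finite.

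The way the result is actually proved --- Griffin's argument, which the paper recasts via Conrad's F-condition on the lattice-ordered group of $t$-invertible $t$-ideals and then generalizes in its Theorem \ref{Theorem X6} --- runs the substantive implication in the direct rather than the contrapositive direction, and this reversal is what dissolves your obstacle. One \emph{assumes} that $A$ is contained in at most finitely many mutually $t$-comaximal finite-type $t$-ideals and uses that assumption to manufacture homogeneous ideals: if no finite-type $t$-ideal above $A$ were homogeneous, one could repeatedly split non-homogeneous ideals into $t$-comaximal pairs ($A_{1},B_{11},B_{22},\dots$ in the paper's notation) and build an arbitrarily large mutually $t$-comaximal family containing $A$, contradicting the hypothesis. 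With homogeneous ideals in hand, one partitions those containing $A$ into similarity classes, picks one representative per class to get a finite mutually comaximal set $T$, and shows the maximal $t$-ideals $M(H)$ for $H\in T$ exhaust the maximal $t$-ideals containing $A$. If you want to keep a contrapositive flavour, the honest version is a dichotomy: either $xD$ already lies in infinitely many mutually comaximal finite-type $t$-ideals (and you are done), or it does not, in which case the splitting-and-counting argument applies to $xD$ and shows it lies in only finitely many maximal $t$-ideals --- but that is the paper's proof, not a constructive recursion.
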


As indicated in the introduction of \cite{Zaf 2010} the set of $t$%
-invertible $t$-ideals of a PVMD is a lattice ordered group under $t$%
-multiplication and the order defined by reverse containment of the ideals
involved and that the above result for PVMDs came from the use of Conrad's
F-condition. Stated for lattice ordered groups Conrad's F-condition says:
Every strictly positive element exceeds at most a finite number of mutually
disjoint elements. This and Theorem \ref{Theorem X2}, eventually led the
authors of \cite{DZ 2010}, to the following statement.

\begin{theorem}
\label{Theorem X3}(cf. Theorem 1 of \cite{DZ 2010}) Let $D$ be an integral
domain, $\ast $ a finite character star operation on $D$ and let $\Gamma $
be a set of proper, nonzero, $\ast $-ideals of finite type of $D$ such that
every proper nonzero $\ast $-finite $\ast $-ideal of $D$ is contained in
some member of $\Gamma $ . Let $I$ be a nonzero finitely generated ideal of $%
D$ with $I^{\ast }\neq D$. Then $I$ is contained in an infinite number of
maximal $\ast $-ideals if and only if there exists an infinite family of
mutually $\ast $-comaximal ideals in $\Gamma $ containing $I$.
\end{theorem}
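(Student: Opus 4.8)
The plan is as follows. The direction ($\Leftarrow$) is immediate: if $\{J_{\lambda}\}_{\lambda\in\Lambda}$ is an infinite mutually $\ast$-comaximal family in $\Gamma$ with $I\subseteq J_{\lambda}$ for all $\lambda$, then each $J_{\lambda}$, being a proper $\ast$-ideal, lies in some maximal $\ast$-ideal $M_{\lambda}$; and $M_{\lambda}=M_{\mu}$ for $\lambda\neq\mu$ is impossible, since that maximal $\ast$-ideal would then contain $(J_{\lambda}+J_{\mu})^{\ast}=D$. Hence the $M_{\lambda}$ are distinct and all contain $I$, so $I$ lies in infinitely many maximal $\ast$-ideals.

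For ($\Rightarrow$), I would construct the family by recursion, writing $V(A)$ for the set of maximal $\ast$-ideals containing a $\ast$-ideal $A$; recall $V(A)\subseteq V(B)$ when $A\supseteq B$, that $A\subseteq\bigcap_{N\in V(A)}N$, and that $V((AB)^{\ast})=V(A)\cup V(B)$ because maximal $\ast$-ideals are prime. The crucial local input comes from the finite character of $\ast$: if $M$ is a maximal $\ast$-ideal and $P$ is a $\ast$-finite $\ast$-ideal with $P\nsubseteq M$, then $(P+M)^{\ast}\supsetneq M$, so $(P+M)^{\ast}=D$ by maximality, and finite character then yields a finitely generated $C\subseteq M$ with $(P_{0}+C)^{\ast}=D$, where $P_{0}$ is any finitely generated ideal with $P_{0}^{\ast}=P$; moreover we are free to enlarge $C$ by any single element of $M$ while keeping $(P_{0}+C)^{\ast}=D$.

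The recursion keeps, after stage $n$, mutually $\ast$-comaximal members $J_{1},\dots,J_{n}\in\Gamma$, each containing $I$, together with the invariant that the set $\mathcal{N}_{n}$ of maximal $\ast$-ideals $N$ with $I\subseteq N$ and $J_{i}\nsubseteq N$ for all $i\le n$ is infinite; for $n=0$ this is $V(I)$, infinite by hypothesis. At stage $n$, set $P:=(J_{1}\cdots J_{n})^{\ast}$ (and $P:=D$ if $n=0$); since each $N\in\mathcal{N}_{n}$ is prime and contains no $J_{i}$, it does not contain $P$, and one also has $J_{i}\supseteq P$ for every $i$. I would pick $M\in\mathcal{N}_{n}$ and, using the local input, a finitely generated $C\subseteq M$ with $(P+C)^{\ast}=D$ and with $C\nsubseteq N$ for infinitely many $N\in\mathcal{N}_{n}$; granting this, $(I+C)^{\ast}$ is a proper nonzero $\ast$-finite $\ast$-ideal contained in $M$, hence contained in some $J_{n+1}\in\Gamma$. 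Then $J_{n+1}\supseteq C$, so any maximal $\ast$-ideal containing $J_{n+1}$ and some $J_{i}$ would contain $C$ and $P$, hence $(P+C)^{\ast}=D$, which is absurd; thus $J_{n+1}$ is $\ast$-comaximal with each $J_{i}$, and it contains $I$. Finally each $N\in\mathcal{N}_{n}$ with $C\nsubseteq N$ lies in $\mathcal{N}_{n+1}$, so the invariant persists. Running the recursion through all $n$ gives the required infinite mutually $\ast$-comaximal family $\{J_{n}\}_{n\ge1}$ in $\Gamma$, each term containing $I$.

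The hard part is precisely the step I flagged: selecting $M\in\mathcal{N}_{n}$ and a finitely generated $C\subseteq M$ with $(P+C)^{\ast}=D$ that fails to be contained in cofinitely many members of $\mathcal{N}_{n}$. In view of the local input and the freedom to absorb single elements into $C$, it is enough to produce $M\in\mathcal{N}_{n}$ and $b\in M$ lying outside infinitely many members of $\mathcal{N}_{n}$; so the task is to rule out the configuration in which every element of every $N\in\mathcal{N}_{n}$ lies in all but finitely many members of $\mathcal{N}_{n}$. I would enumerate distinct maximal $\ast$-ideals $N_{1},N_{2},\dots$ in $\mathcal{N}_{n}$, use incomparability of distinct maximal $\ast$-ideals to extract elements $b_{k}\in N_{k+1}\setminus N_{k}$, and combine these (together with finite character) to contradict the assumed configuration, forcing $\mathcal{N}_{n}$ to be finite. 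An alternative that sidesteps maintaining infinitude is to first show that any finite mutually $\ast$-comaximal family in $\Gamma$ above $I$ can be strictly enlarged --- by adjoining a member supported at a maximal $\ast$-ideal in $V(I)$ that the family misses, or, when the finitely many $V(J_{i})$ already cover $V(I)$ and hence one $V(J_{i})$ is infinite, by splitting that $J_{i}$ into two $\ast$-comaximal members of $\Gamma$ still above $I$ using two distinct maximal $\ast$-ideals over it --- and then to promote ``families of every finite size'' to an infinite family by a greedy/transfinite argument showing the enlargement process cannot stabilize below stage $\omega$; this reduction is clean, but the final promotion is itself the heart of the matter.
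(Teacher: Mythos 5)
Your ($\Leftarrow$) direction is correct. The ($\Rightarrow$) direction, however, has a genuine gap at exactly the point you flagged, and the sketch you give for closing it does not work. You need, inside the infinite set $\mathcal{N}_{n}$ of maximal $\ast$-ideals over $I$, an $M$ and a finitely generated $C\subseteq M$ with $(P+C)^{\ast}=D$ that escapes infinitely many members of $\mathcal{N}_{n}$. Incomparability of distinct maximal $\ast$-ideals only hands you an element of one maximal $\ast$-ideal missing \emph{one} other; it says nothing about missing infinitely many, and stringing together elements $b_{k}\in N_{k+1}\setminus N_{k}$ produces no contradiction with the bad configuration (each $b_{k}$ may perfectly well lie in all but finitely many $N_{j}$ while missing $N_{k}$). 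Worse, ruling out that configuration is not a lemma you can prove in isolation --- it is essentially the theorem itself: if every element of every $N\in\mathcal{N}_{0}$ lay in all but finitely many members of $\mathcal{N}_{0}$, then any two finite-type $\ast$-ideals containing $I$ would have cofinite, hence intersecting, ``supports'' in $\mathcal{N}_{0}$ and so would never be $\ast$-comaximal; no two-element comaximal family in $\Gamma$ over $I$ would exist, contradicting the conclusion you are trying to prove. So your recursion is circular at its base case. Your fallback at the end fails for the reason you yourself identify: in the covering case the enlargement \emph{replaces} a member $J_{i}$ by two new ones that are not comaximal with $J_{i}$, so the chain of families is not increasing under inclusion, unions at limit stages are unavailable, and arbitrarily large finite comaximal families do not compactify into an infinite one.

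The paper's route (the proposed corrigendum to \cite{DZ 2010}, carried out in detail in the proof of Theorem \ref{Theorem X6} and Corollary \ref{Corollary PX}) sidesteps all of this by proving the contrapositive through $\ast$-\emph{homogeneous} ideals, i.e.\ finite-type $\ast$-ideals contained in a unique maximal $\ast$-ideal. Assuming every mutually $\ast$-comaximal family in $\Gamma$ over $I$ is finite, a splitting argument (any non-homogeneous finite-type $\ast$-ideal sits under two $\ast$-comaximal proper ones; iterating while \emph{keeping} all previously produced members and only ever splitting the residual one accumulates an infinite comaximal family unless the process stops) shows $I$ lies in a homogeneous member of $\Gamma$. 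Zorn's lemma on sets of mutually $\ast$-comaximal homogeneous members of $\Gamma$ over $I$ then yields a finite maximal such set $\{V_{1},\dots,V_{n}\}$, and since each $V_{i}$ determines a unique maximal $\ast$-ideal $M(V_{i})$, the final step --- showing every maximal $\ast$-ideal over $I$ is one of the $M(V_{i})$ --- only requires picking $x\in N\setminus\bigcup_{i=1}^{n}M(V_{i})$, which is prime avoidance over \emph{finitely} many primes. That homogeneity reduction is precisely what converts your intractable ``avoid infinitely many maximal $\ast$-ideals'' problem into an ``avoid finitely many'' problem, and it is the idea missing from your proposal.
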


This theorem was a coup, it sort of catapulted the consideration of
finiteness of character from Prufer-like domains to consideration of
finiteness of $\ast $-character in general domains. But alas, there was an
error in the proof. There was no reason for the error as I had used the
technique, Conrad's F-condition, involved in the proof of Theorem \ref%
{Theorem X3} at other places such as \cite{DLMZ 2001}, \cite{MRZ 2008} and,
later, \cite{DZ 2011} but there it was. I realized the error while working
on a paper on p.o. groups, that I eventually published with Y.C. Yang as 
\cite{YZ 2011}. I wrote to my coauthor of \cite{DZ 2010}, proposing a
corrigendum. But for one reason or another the corrigendum never got off the
ground. Fortunately Chang and Hamdi have recently published \cite{CH 2019}
including Theorem 1 of \cite{DZ 2010} as Lemma 2.3 with proof exactly the
way I would have liked after the corrigendum was used.

Perhaps as a kind gesture those authors have not pointed out the error in
the proof of \cite[Theorem 1]{DZ 2010}, but a careless use of Zorn's Lemma
must be pointed out so that others do not fall in a similar pit. Now going
over the whole thing anew might be painful, so I reproduce below the
proposed brief corrigendum and point out any other s made that I could not
see at that time.

\begin{quotation}
\textquotedblleft There is some confusion in lines 8-15 of the proof of
Theorem 1. In the following we offer a fix to clear the confusion and give a
rationale for the fix.

The fix: Read the proof from the sentence that starts from line 8 as
follows: Let $S$ be the family of sets of mutually $\ast $-comaximal
homogeneous members of $\Gamma $ containing $I$. Then $S$ is non empty by $%
(\sharp \sharp ).$ Obviously $S$ is partially ordered under inclusion. Let $%
A_{n_{1}}\subset A_{n_{2}}\subset ...\subset A_{n_{r}}\subset ...$ be an
ascending chain of sets in $S$. Consider $T=\cup A_{n_{r}}.$ We claim that
the members of $T$ are mutually $\ast $-comaximal. For take $x,y\in T,$ then 
$x,y\in A_{n_{i}},$ for some $i,$ and hence are $\ast $-comaximal. Having
established this we note that by $(\sharp ),$ $T$ must be finite and hence
must be equal to one of the $A_{n_{j}}.$ Thus by Zorn's Lemma, $S$ must have
a maximal element $U=\{V_{1},V_{2},...,V_{n}\}.$ Disregard the next two
sentences and read on from: Next let $M_{i}$ be the maximal $\ast $-ideal....

Rationale for the Fix: Using sets of mutually $\ast $-comaximal elements
would entail some unwanted maximal elements as the following example shows:
Let $x=2^{2}5^{2}$ in $Z$ the ring of integers. Then $\mathcal{S}%
=\{\{(2^{2}5^{2})\},\{(25^{2})\},\{(2^{2}5)\}\{(2^{2})\},$ $%
\{(5^{2})\},\{(2^{2}),(5^{2})\},$ $\{(2)\},\{(5)\},$ $\{(2),(5^{2})\},%
\{(2^{2}),(5)\},$ $\{(2),(5)\}\}.$ In this case, while $\mathcal{S}$
includes legitimate maximal elements: $\{(2^{2}),(5^{2})\},$ $%
\{(2),(5^{2})\},$ $\{(2^{2}),(5)\},\{(2),(5)\}$ it also includes $%
\{(2^{2}5^{2})\},\{(25^{2})\},\{(2^{2}5)\}$ which fit the definition of
maximal elements. The reason why the fix should work is that given any set $%
T=\{A_{1},A_{2},...,A_{m}\}$ of mutually $\ast $-comaximal $\ast $-finite
ideals, by $(\sharp \sharp )$ there is a set of mutually $\ast $-comaximal
homogeneous $\ast $-finite ideals $\{H_{1},H_{2},...,H_{n}\}$ in $\Gamma ,$
where $n\geq m$ such that each $H_{j}$ contains some $A_{i}.$ Also as a
homogeneous ideal cannot be contained in two disjoint ideals we do not face
the above indicated problem and Zorn's Lemma gives the required maximal
elements.\textquotedblright\ 

(To be sure that the above "proposal" was not created after seeing the Chang
Hamdi paper check the image of the E-mail sent to Prof. Dumitrescu and a pdf
version of the corrigendum here \cite{Zaf1 2019}, at the end of that
doument.)
\end{quotation}

The other error was essentially confusing the size of a set with the set, on
my part. I must admit that my coauthor told me to say, after finding that
there was at least one homogeneous ideal containing a given $\ast $-ideal $A$
of finite type, that one can find a largest set of mutually $\ast $%
-comaximal homogeneous ideals containing $A.$ But I just don't care about
doing that unless the conclusion is very simple.

It's only fitting that I end this saga with a more satisfying statement
and/or proof of \cite[Theorem 1]{DZ 2010}. Lurking behind the fa\c{c}ade of
the set $\Gamma $ and the other conditions were the following definitions
and statements. Call a $\ast $-ideal $I$ of finite type ($\ast $-)
homogeneous, as we have already done, if $I$ is contained in a unique
maximal $\ast $-ideal $M=M(I)$.

\begin{lemma}
\label{Lemma X4}A $\ast $-ideal $I$ of finite type is $\ast $-homogeneous if
and only if for each pair $X,Y$ of proper $\ast $-ideals of finite type
containing $I$ we have that $(X+Y)^{\ast }$ is proper.
\end{lemma}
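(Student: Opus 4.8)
The plan is to prove the two implications separately. The forward direction is essentially immediate from the fact quoted earlier (for a finite character $\ast$, every proper $\ast$-ideal lies in a maximal $\ast$-ideal); the reverse direction needs a small construction in which finite character is used in an essential way.

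\emph{Forward direction.} Suppose $I$ is $\ast$-homogeneous, with $M=M(I)$ the unique maximal $\ast$-ideal containing $I$. Let $X,Y$ be proper $\ast$-ideals of finite type with $I\subseteq X$ and $I\subseteq Y$. Since $X$ is a proper $\ast$-ideal, it is contained in some maximal $\ast$-ideal $N$; but then $I\subseteq X\subseteq N$, so $N=M$ by uniqueness, i.e. $X\subseteq M$, and likewise $Y\subseteq M$. Hence $X+Y\subseteq M$, so $(X+Y)^{\ast}\subseteq M^{\ast}=M\subsetneq D$, which is exactly what is claimed.

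\emph{Reverse direction.} I would argue the contrapositive. Assume $I$ is a proper nonzero $\ast$-ideal of finite type (the degenerate case $I=D$ is excluded, consistently with the paper's convention, since then $I$ lies in no maximal $\ast$-ideal while the right-hand condition is vacuous) and suppose $I$ lies in two distinct maximal $\ast$-ideals $M_{1}\neq M_{2}$. By maximality $M_{2}\not\subseteq M_{1}$, so $M_{1}+M_{2}$ strictly contains $M_{1}$, and since $M_{1}$ is maximal among proper $\ast$-ideals this forces $(M_{1}+M_{2})^{\ast}=D$. Now finite character enters: from $1\in(M_{1}+M_{2})^{\ast}=\bigcup\{B^{\ast}: B\subseteq M_{1}+M_{2},\ B\text{ finitely generated}\}$ we obtain a finitely generated $C=(c_{1},\dots,c_{k})\subseteq M_{1}+M_{2}$ with $C^{\ast}=D$; write $c_{i}=a_{i}+b_{i}$ with $a_{i}\in M_{1}$, $b_{i}\in M_{2}$. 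Set $X=(I,a_{1},\dots,a_{k})^{\ast}$ and $Y=(I,b_{1},\dots,b_{k})^{\ast}$. Using that $I$ is of finite type together with the identity $(A+B)^{\ast}=(A^{\ast}+B^{\ast})^{\ast}$, both $X$ and $Y$ are $\ast$-ideals of finite type containing $I$; and $X\subseteq M_{1}$, $Y\subseteq M_{2}$, so both are proper. Yet $(X+Y)^{\ast}\supseteq (a_{1},\dots,a_{k},b_{1},\dots,b_{k})^{\ast}\supseteq C^{\ast}=D$, so $(X+Y)^{\ast}=D$, contradicting the hypothesis. Hence $I$ lies in a unique maximal $\ast$-ideal, i.e. $I$ is $\ast$-homogeneous.

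The one genuine obstacle is in the reverse direction. The tempting move is to pick single elements $a\in M_{1}\setminus M_{2}$ and $b\in M_{2}\setminus M_{1}$ and take $X=(I,a)^{\ast}$, $Y=(I,b)^{\ast}$, but then $(X+Y)^{\ast}=(I,a,b)^{\ast}$ need not equal $D$: if $I$ sits inside infinitely many maximal $\ast$-ideals, some third one may still contain $I$, $a$, and $b$. The fix is to route through $(M_{1}+M_{2})^{\ast}=D$ and then use finite character to pull back a finitely generated witness $C$ of $1$, splitting each of its (finitely many) generators across $M_{1}$ and $M_{2}$; this is what makes the construction work in full generality.
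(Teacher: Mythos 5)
Your proof is correct and follows essentially the same route as the paper's: the forward direction is the same uniqueness argument, and in the reverse direction the paper likewise uses finite character to extract a finitely generated witness of $\ast$-comaximality of the two maximal $\ast$-ideals and adjoins it to $I$ on each side. The only cosmetic difference is that the paper splits asymmetrically (a single $n\in N_{1}\setminus N_{2}$ together with a finite $J\subseteq N_{2}$ satisfying $(n,J)^{\ast}=D$) where you split the generators of your witness $C$ symmetrically across $M_{1}$ and $M_{2}$.
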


\begin{proof}
Let $I$ be $\ast $-homogeneous, then any proper finite type $\ast $-ideals $%
X,Y$ containing $I$ are $\ast $-homogeneous contained in $M(I)$ and so $%
(X+Y)^{\ast }\subseteq M(I).$ Conversely if the condition holds and $I$ is
contained in two distinct maximal $\ast $-ideals $N_{1},N_{2}$. For $n\in
N_{1}\backslash N_{2}$ we have $(n,N_{2})^{\ast }=D,$ so there is a finite
set $J\subseteq N_{2}$ such that $(n,J)^{\ast }=D$, because $\ast $ is of
finite type. But then $X=(I,n)^{\ast }\subseteq N_{1}$ and $Y=(I,J)^{\ast
}\subseteq N_{2}$ both containing $I$ but $(X+Y)^{\ast }=D$ a contradiction.
\end{proof}

\begin{remark}
\label{Remark X5}Note that if $A$ and $B$ are proper $\ast $-ideals such
that $(A+B)^{\ast }=D$ and if $C$ is any proper $\ast $-ideal containing $B$
then $(A+C)^{\ast }=D,$ since $(A+C)^{\ast }=(A+B+C)^{\ast }.$
\end{remark}

\begin{theorem}
\label{Theorem X6}Let $\ast $ be a finite type star operation defined on an
integral domain $D.$ Then $D$ is of finite $\ast $-character if and only if
every $\ast $-ideal of finite type of $D$ is contained in at most a finite
number of mutually $\ast $-comaximal $\ast $-ideals of finite type.
\end{theorem}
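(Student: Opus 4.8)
I would prove the two implications separately; the forward one is a short counting argument and the converse carries the content. For ``finite $\ast $-character $\Rightarrow $ the comaximality condition'': let $I$ be a $\ast $-ideal of finite type, which we may take to be proper (for $I=D$ there is nothing to prove), and pick a nonzero $x\in I$. Then $x$ is a nonzero non-unit, so only finitely many maximal $\ast $-ideals contain $x$, and hence only finitely many, say $M_{1},\dots ,M_{r}$, contain $I$. If $\{J_{\alpha }\}$ is any family of mutually $\ast $-comaximal $\ast $-ideals of finite type each containing $I$, then each proper $J_{\alpha }$ lies in some $M_{i(\alpha )}$, and $(J_{\alpha }+J_{\beta })^{\ast }=D$ forces $i(\alpha )\neq i(\beta )$ when $\alpha \neq \beta $; so the family has at most $r$ proper members and is finite.

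For the converse I argue by contradiction: assume the comaximality condition and suppose a nonzero non-unit $x$ lies in infinitely many maximal $\ast $-ideals. The first step is to see that the hypothesis forces $D$ to be $\ast $-potent. If some maximal $\ast $-ideal $M$ were not $\ast $-potent, then no $\ast $-ideal of finite type contained in $M$ is $\ast $-homogeneous (otherwise $M$ would contain it and be $\ast $-potent). Starting from $B_{0}=(x_{0}D)^{\ast }$ for a nonzero $x_{0}\in M$ and iterating the construction from the proof of Lemma~\ref{Lemma X4} --- at stage $n$ the ideal $B_{n}\subseteq M$, being non-homogeneous, lies in a second maximal $\ast $-ideal $M_{n}\neq M$; pick $a_{n}\in M\setminus M_{n}$, use finite character of $\ast $ to get a finite $J_{n}\subseteq M_{n}$ with $(a_{n},J_{n})^{\ast }=D$, and put $B_{n+1}=(B_{n},a_{n})^{\ast }$ and $Z_{n+1}=(B_{n},J_{n})^{\ast }$ --- one obtains a chain $B_{0}\subseteq B_{1}\subseteq \cdots $ inside $M$ and finite-type $\ast $-ideals $Z_{n+1}\supseteq B_{n}$ with $(B_{n+1}+Z_{n+1})^{\ast }=D$. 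Since $Z_{m+1}\supseteq B_{m}\supseteq B_{n+1}$ for $m>n$, Remark~\ref{Remark X5} shows the $Z_{n}$ are pairwise $\ast $-comaximal, and all of them contain $(x_{0}D)^{\ast }$; this infinite family contradicts the hypothesis. Hence every maximal $\ast $-ideal is $\ast $-potent.

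Next, choose distinct maximal $\ast $-ideals $M_{1},M_{2},\dots $ containing $x$; each is $\ast $-potent, so $M_{i}=M(I_{i})$ for some $\ast $-homogeneous ideal $I_{i}$. Then $H_{i}:=(I_{i},x)^{\ast }$ is a $\ast $-ideal of finite type with $(xD)^{\ast }\subseteq H_{i}\subseteq M_{i}$; it is again $\ast $-homogeneous with $M(H_{i})=M_{i}$ (a maximal $\ast $-ideal containing $H_{i}$ contains $I_{i}$, hence equals $M(I_{i})$), and $(H_{i}+H_{j})^{\ast }=D$ for $i\neq j$ since otherwise a maximal $\ast $-ideal over $H_{i}+H_{j}$ would equal both $M_{i}$ and $M_{j}$. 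Thus $\{H_{1},H_{2},\dots \}$ is an infinite family of mutually $\ast $-comaximal $\ast $-ideals of finite type containing $(xD)^{\ast }$, again contradicting the hypothesis; so $x$ lies in only finitely many maximal $\ast $-ideals and $D$ is of finite $\ast $-character.

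The main obstacle is the converse, and specifically producing the infinite mutually $\ast $-comaximal family. The tempting idea of ``peeling off one maximal $\ast $-ideal at a time'' from $(xD)^{\ast }$ does not work, because in general one cannot split off a single maximal $\ast $-ideal and leave a finite-type remainder that still lies in infinitely many maximal $\ast $-ideals --- this already fails for the ideal $(Y)$ in $k[X,Y]$. The dichotomy above is what gets around this: when the maximal $\ast $-ideals at hand are $\ast $-potent the $\ast $-homogeneous ideals $H_{i}$ give the whole family simultaneously, and when one of them is not $\ast $-potent the recursive bisection carried out entirely inside that one maximal $\ast $-ideal never terminates and produces the family. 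A secondary point is that Remark~\ref{Remark X5} must be applied carefully, to the whole chain $B_{0}\subseteq B_{1}\subseteq \cdots $, so as to yield full pairwise --- not merely consecutive --- $\ast $-comaximality of the $Z_{n}$.
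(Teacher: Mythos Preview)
Your proof is correct. The forward direction is the same pigeonhole count as in the paper, and in the converse your recursion inside a non-potent maximal $\ast$-ideal, together with the use of Remark~\ref{Remark X5} to upgrade consecutive to full pairwise $\ast$-comaximality of the $Z_n$, is sound; Step~2 then follows cleanly from $\ast$-potency.

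The paper organizes the converse differently. Its intermediate claim is not that $D$ is $\ast$-potent but that \emph{every} finite-type $\ast$-ideal lies in some $\ast$-homogeneous ideal: starting from an $A$ contained in no homogeneous ideal, it recursively splits $A\subseteq A_1,B_1$ with $(A_1+B_1)^{\ast}=D$, then splits $B_1$, and so on, non-homogeneity persisting because each piece still contains $A$. It then partitions the homogeneous ideals above $A$ into similarity classes, picks one representative per class to get a finite set $T$, and argues that the $M(H)$ for $H\in T$ exhaust the maximal $\ast$-ideals over $A$. Your recursion is the same bisection idea with a different invariant (the leftover $B_n$ stays inside a fixed non-potent $M$, so non-homogeneity persists because $M$ contains no homogeneous ideal), and your second step replaces the equivalence-class bookkeeping by a direct appeal to potency. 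The paper's route isolates a slightly different standalone lemma; yours is more streamlined toward the theorem and yields $\ast$-potency as a byproduct, which is of independent interest since the paper observes elsewhere (Theorem~\ref{Theorem 1} and the discussion following it) that the two intermediate statements are not equivalent in general.
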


\begin{proof}
(I) We first show that every $\ast $-ideal of finite type of $D$ is
contained in at least one $\ast $-homogeneous ideal of $D$. For suppose that
there is a $\ast $-ideal $A$ of finite type of $D$ that is not contained in
any $\ast $-homogeneous ideals of $D$. Then obviously $A$ is not $\ast $%
-homogeneous. So there are at least two proper $\ast $-ideals $A_{1},B_{1}$
of finite type such that $(A_{1}+B_{1})^{\ast }=D$ and $A\subseteq
A_{1},B_{1}$. Obviously, neither of $A_{1},B_{1}$ is homogeneous. As $B_{1}$
is not $\ast $-homogeneous there are at least two $\ast $-comaximal proper $%
\ast $-ideals $B_{11},B_{12}$ of finite type containing $B_{1}$. Now by
Remark \ref{Remark X5} $A_{1},B_{11},B_{12}$ are mutually $\ast $-comaximal
proper $\ast $-ideals containing $A$ and by assumption none of these is $%
\ast $-homogeneous. Let $B_{123}$ and $B_{22}$ be two $\ast $-comaximal
proper $\ast $-ideals containing $B_{12}.$ Then by Remark \ref{Remark X5}
and by assumption, $A_{1},B_{11},B_{22}$, $B_{123}$ are proper mutually $%
\ast $-comaximal $\ast $-ideals containing $A$ and none of these ideals is
homogeneous, and so on. Thus at stage $n$ we have a collection: $%
A_{1},B_{11},B_{22},...,$ $B_{nn},B_{12...n,n+1}$ that are proper mutually $%
\ast $-comaximal $\ast $-ideals containing $A$ and none of these ideals is
homogeneous. The process is never ending and has the potential of delivering
an infinite number of mutually $\ast $-comaximal proper $\ast $-ideals of
finite type containing $A,$ contrary to the finiteness condition. Whence the
conclusion.

Call two $\ast $-homogeneous ideals $A,B$ similar if $(A,B)^{\ast }\neq D,$
that is if $A$ and $B$ belong to the same maximal $\ast $-ideal. The
relation $R=$ \textquotedblleft $A$ is similar to $B$\textquotedblright\ is
obviously an equivalence relation on the set $S$ of $\ast $-homogeneous
ideals containing $A.$ Form a set $T$ of $\ast $-homogeneous ideals by
selecting one and exactly one $\ast $-homogeneous ideal from each
equivalence class of $R$. Then $T$ is a set of mutually $\ast $-comaximal $%
\ast $-homogeneous ideals containing $A$ and so must be finite because of
the finiteness condition. Let $|T|=n$ and claim that $n$ is the largest
number of mutually $\ast $-comaximal $\ast $-ideals of finite type
containing $A.$ For if not then there is say a set $U$ of mutually $\ast $%
-comaximal $\ast $-ideals of finite type that contain $A$ and $|U|=r>n.$
Then there is at least one member $B$ of $U$ that is $\ast $-comaximal with
each member of $T.$ (Since no two $\ast $-comaximal $\ast $-ideals share the
same maximal $\ast $-ideal.) But then, by (I), there is a $\ast $%
-homogeneous ideal $J$ containing $B.$ By Remark \ref{Remark X5}, $J$ is $%
\ast $-comaximal with each member of $T,$ yet by the construction of $T$ a $%
\ast $-homogeneous ideal containing $A$ must be similar to a member of $T,$
a contradiction. Finally if $P_{1},P_{2},...,P_{n}$ are maximal $\ast $%
-ideals such that each contains a member of $T$ then these are the only
maximal $\ast $-ideals containing $A.$ For if not then there is a maximal $%
\ast $-ideal $M\neq P_{i}$ containing $A$ and there is $x\in M\backslash
P_{i},$ $i=1,2,...,n.$ But then $(A,x)^{\ast }$ is a finite type $\ast $%
-ideal containing $A$ and $\ast $-comaximal with each member of $T,$ yet by
(I) $(A,x)^{\ast }$ must be contained in a $\ast $-homogeneous ideal that is 
$\ast $-comaximal with each member of $T$, a contradiction. For the converse
note that if a nonzero non unit $x\in D$ is contained in infinitely many
mutually $\ast $-comaximal ideals then $D$ cannot be of finite $\ast $%
-character, because a maximal $\ast $-ideal cannot contain two or more $\ast 
$-comaximal ideals.
\end{proof}

So, if we must construct a $\ast $-homogeneous ideal we know where to go.
Otherwise there are plenty of $\ast $-potent domains, with one kind studied
in \cite{HZ 2019} under the name $\ast $-super potent domains. Let's note
here that there is a slight difference between the definitions. Definition
1.1 of \cite{HZ 2019} calls a finitely generated ideal $I$ $\ast $-rigid if $%
I$ is contained in a unique maximal $\ast $-ideal. But it turns out that if $%
I$ is $\ast $-rigid, then $I^{\ast }$ is $\ast $-homogeneous and if $J$ is $%
\ast $-homogeneous then $J$ contains a finitely generated ideal $K$ such
that $K$ is exactly in the same maximal $\ast $-ideal containing $J,$ making 
$K$ $\ast $-rigid, see also \cite{Zaf2 2019}.)

\bigskip

\section{\label{S2}What $\ast $-homogeneous ideals can do}

This much about $\ast $-homogeneous ideals and potent domains leads to the
questions: What else can $\ast $-homogeneous ideals do? $\ast $-homogeneous
ideals arise and figure prominently in the study of finite $\ast $-character
of integral domains. The domains of $\ast $-finite character where the $\ast 
$-homogeneous ideals show their full force are the $\ast $-Semi Homogeneous (%
$\ast $-SH) Domains.

It turns out, and it is easy to see, that if $I$ and $J$ are two $\ast $%
-homogeneous ideals that are similar, i.e. that belong to the same unique
maximal $\ast $-ideal (i.e. $M(I)=M(J)$ in the notation and terminology of 
\cite{AZ 2019}) then $(IJ)^{\ast }$ is $\ast $-homogeneous belonging to the
same maximal $\ast $-ideal. With the help of this and some auxiliary results
it can then be shown that if an ideal $K$ is a $\ast $-product of finitely
many $\ast $-homogeneous ideals then $K$ can be uniquely expressed as a $%
\ast $-product of mutually $\ast $-comaximal $\ast $-homogeneous ideals.
Based on this a domain $D$ is called a $\ast $-semi homogeneous ($\ast $-SH)
domain if every proper principal ideal of $D$ is expressible as a $\ast $%
-product of finitely many $\ast $-homogeneous ideals. It was shown in \cite[%
Theorem 4]{AZ 2019} that $D$ is a $\ast $-SHD if and only if $D$ is a $\ast $%
-h-local domain ($D$ is a locally finite intersection of localizations at
its maximal $\ast $-ideals and no two maximal $\ast $-ideals of $D$ contain
a common nonzero prime ideal.) Now if we redefine a $\ast $-homogeneous
ideal so that the $\ast $-product of two similar, newly defined, $\ast $%
-homogeneous ideals is a $\ast $-homogeneous ideal meeting the requirements
of the new definition, we have a new theory.

To explain the process of getting a new theory of factorization merely by
producing a suitable definition of a $\ast $-homogeneous ideal we give below
one such theory.

Let's recall first that if $A=(a_{1},...,a_{m})$ is a finitely generated
ideal then $A_{(r)}$ denotes $(a_{1}^{r},...,a_{m}^{r})$. Let's also recall
that if $A$ is $\ast $-invertible then $(A^{r})^{\ast
}=(a_{1}^{r},...,a_{m}^{r})^{\ast }$ \cite[Lemma 1.14]{AHZ 2019}.

\begin{definition}
\label{Definition X7} Call a $\ast $-homogeneous ideal $I$ $\ast $-almost
factorial general homogeneous ($\ast $-afg homogeneous) if (afg1) $I$ is $%
\ast ~$-invertible, and (afg2) for each finite type $\ast $-homogeneous
ideal $J\subseteq M(I)$ we have for some $r\in N,$ $(I^{r}+J)_{m}^{\ast }$
is principal for some $m\in N,$ $m$ depending upon the choice of generators
of $(I^{r}+J).$
\end{definition}

(You can also redefine it as: Definition A. Call a $\ast $-homogeneous ideal 
$I$ $\ast $-almost factorial general homogeneous ($\ast $-afg homogeneous)
if (afg1) $I$ is $\ast ~$-invertible and (afg2) for each finitely generated $%
\ast $-homogeneous ideal $J\subseteq M(I)$ such that $J^{\ast }\supseteq
I^{r}$ , for some $r\in N$, we have $(J)_{m}^{\ast }$ principal for some $%
m\in N$. (Here you may add that $m$ may vary with each choice of generators
of $J.$ And redo the following accordingly.)

\begin{lemma}
\label{Lemma X8} Let $I$ be $\ast $-invertible and $J$ any f.g. ideal then $%
((IJ)_{r})^{\ast }=(I^{r}J_{r})^{\ast }$
\end{lemma}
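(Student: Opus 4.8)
The plan is to prove the identity $((IJ)_{r})^{\ast} = (I^{r}J_{r})^{\ast}$ by first reducing to the case where $I$ is finitely generated (using the finite character of $\ast$ and the fact that a $\ast$-invertible ideal has $I^{\ast} = B^{\ast}$ for a finitely generated $B \subseteq I$ which is itself $\ast$-invertible), then comparing the two finitely generated ideals $(IJ)_{r}$ and $I^{r}J_{r}$ inside a common $\ast$-closure. Write $I = (a_{1},\dots,a_{s})$ and $J = (b_{1},\dots,b_{t})$, so that $IJ$ is generated by the products $a_{i}b_{j}$ and $(IJ)_{r}$ is generated by the $r$-th powers $a_{i}^{r}b_{j}^{r}$. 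On the other hand $I^{r}J_{r}$ is generated by the products (monomial of degree $r$ in the $a$'s) times $b_{j}^{r}$. So the containment $(IJ)_{r} \subseteq I^{r}J_{r}$ is immediate: each generator $a_{i}^{r}b_{j}^{r} = (a_{i}^{r})(b_{j}^{r})$ lies in $I^{r}J_{r}$ since $a_{i}^{r} \in I^{r}$ and $b_{j}^{r} \in J_{r}$. Taking $\ast$-closures gives $((IJ)_{r})^{\ast} \subseteq (I^{r}J_{r})^{\ast}$ for free, without invertibility.

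For the reverse containment $(I^{r}J_{r})^{\ast} \subseteq ((IJ)_{r})^{\ast}$ I would invoke the key fact recalled just before Definition X7: if $I$ is $\ast$-invertible then $(I^{r})^{\ast} = I_{(r)}^{\ast} = (a_{1}^{r},\dots,a_{s}^{r})^{\ast}$ (this is \cite[Lemma 1.14]{AHZ 2019}). Thus $(I^{r}J_{r})^{\ast} = (I^{r}J_{r})^{\ast} = ((I^{r})^{\ast}J_{r})^{\ast} = (I_{(r)}J_{r})^{\ast}$, so it suffices to show $I_{(r)}J_{r} \subseteq ((IJ)_{r})^{\ast}$. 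Now $I_{(r)}J_{r}$ is generated by the products $a_{i}^{r}b_{j}^{r}$, which are exactly the generators of $(IJ)_{r}$; hence $I_{(r)}J_{r} = (IJ)_{r}$ as honest ideals, and the inclusion is trivial. Combining the two directions and using idempotence of $\ast$ yields $((IJ)_{r})^{\ast} = (I_{(r)}J_{r})^{\ast} = (I^{r}J_{r})^{\ast}$.

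The one point that needs care — and is really the heart of the argument — is the appeal to $(I^{r})^{\ast} = I_{(r)}^{\ast}$: in general $I^{r}$ is generated by \emph{all} degree-$r$ monomials in the generators of $I$, not just the pure powers $a_{i}^{r}$, so the identity $(I^{r})^{\ast} = (a_{1}^{r},\dots,a_{s}^{r})^{\ast}$ genuinely uses $\ast$-invertibility (it fails for arbitrary ideals). Once that is granted, everything else is formal manipulation with the identity $(XY)^{\ast} = (X^{\ast}Y)^{\ast}$ and the observation that $(IJ)_{r}$ and $I_{(r)}J_{r}$ have literally the same generating set. I would also remark that the choice of generators of $I$ does not matter here because $(I^{r})^{\ast}$ is intrinsic, so the cleanest route is: $((IJ)_{r})^{\ast} = (I_{(r)}J_{r})^{\ast} = ((I^{r})^{\ast}J_{r})^{\ast} = (I^{r}J_{r})^{\ast}$, reading the chain of equalities off from the recalled lemma and the basic $\ast$-product identities.
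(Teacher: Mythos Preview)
Your proof is correct and follows essentially the same route as the paper's. The paper's argument is compressed into a single chain of equalities $(IJ)_{r}^{\ast}=(\{a_{i}^{r}b_{j}^{r}\})^{\ast}=((a_{1}^{r},\dots,a_{m}^{r})(b_{1}^{r},\dots,b_{n}^{r}))^{\ast}=(I^{r}J_{r})^{\ast}$, invoking $\ast$-invertibility of $I$ at the last step exactly where you do, namely to pass from $I_{(r)}$ to $I^{r}$ under the $\ast$-closure via \cite[Lemma 1.14]{AHZ 2019}; your version simply unpacks this into the two containments and is a bit more explicit about the reduction to finitely generated $I$, which the paper leaves implicit.
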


\begin{proof}
Let $I=(a_{1},...,a_{m}),$ $J=(b_{1},...,b_{n}).$ Then $IJ=(%
\{a_{i}b_{j}|i=1,...,m$; $j=1,...n\})$ and $(IJ)_{r}^{\ast
}=\{a_{i}^{r}b_{j}^{r}|i=1,...,m$; $j=1,...n\}^{\ast
}=((a_{1}^{r},...,a_{m}^{r})(b_{1}^{r},...,b_{n}^{r}))^{\ast
}=(I^{r}J_{r})^{\ast },$ because $I$ is $\ast $-invertible.
\end{proof}

Using the above definition, we can be sure of the following.

\begin{proposition}
\label{Proposition X9} The following hold for a $\ast $-afg ideal $I.$ (1) $%
(I^{r})^{\ast }$ is principal for some positive integer $r,$ (2) for any
finitely generated ideal $J\subseteq M(I),$ we have $(I^{m})^{\ast
}\subseteq (J_{m})^{\ast }$ or $(J_{m})^{\ast }\subseteq (I^{m})^{\ast }$
for some positive integer $m,$ (3) if $J$ is a $\ast $-invertible $\ast $%
-ideal that contains $I,$ then $J$ itself is a $\ast $-afg ideal and (4) if $%
J$ is a $\ast $-afg ideal similar to $I$ (i.e., $J\subseteq M(I)),$ then $%
(IJ)^{\ast }$ is $\ast $-afg similar to both $I$ and $J.$
\end{proposition}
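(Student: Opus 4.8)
The plan is to establish the four claims in turn, leaning throughout on two facts about the $\ast$-invertible ideal $I$: by Lemma \ref{Lemma X8} (and the recalled identity $(A^{r})^{\ast}=(a_{1}^{r},\dots ,a_{k}^{r})^{\ast}$ for $\ast$-invertible $A$) the $\ast$-powers of $I$ are computed on any generating set of $I$; and a finite type $\ast$-ideal whose $\ast$-closure is principal is itself $\ast$-invertible, so that once one $\ast$-power-closure is principal all higher ones are. These are what let the condition (afg2) of Definition \ref{Definition X7} be transported along inclusions and products. Claim (1) is then immediate: apply (afg2) to the test ideal $J:=I$, which is a finite type $\ast$-homogeneous ideal contained in $M(I)$; since $I^{r}\subseteq I$ the ideal $I^{r}+I$ is just $I$, so $(I^{r}+I)_{m}^{\ast}=(I)_{m}^{\ast}=(I^{m})^{\ast}$ (the last by $\ast$-invertibility), and this is principal for some $m$, whence $(I^{mk})^{\ast}=((I^{m})^{\ast})^{k}$ is principal for every $k$.

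For (2): given a finitely generated $J\subseteq M(I)$, the ideal $(I+J)^{\ast}$ is again finite type and $\ast$-homogeneous with $M((I+J)^{\ast})=M(I)$ — it is proper, of finite type, and every maximal $\ast$-ideal over it contains $I$, hence equals $M(I)$. Feeding $(I+J)^{\ast}$ into (afg2) and using $I^{r}\subseteq(I+J)^{\ast}$, the relevant $\ast$-ideal collapses to $((I+J)_{m})^{\ast}$; taking for $I+J$ the generating set made of generators of $I$ together with generators of $J$ and applying Lemma \ref{Lemma X8} with $\ast$-invertibility of $I$, this equals $(I^{m}+J_{m})^{\ast}$. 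So $(I^{m}+J_{m})^{\ast}$ is principal, and localizing at $M(I)$ — where $I$ becomes principal since $(A^{\ast})_{M}=A_{M}$ for finite character $\ast$ — one reads off that $(I^{m})^{\ast}$ and $(J_{m})^{\ast}$ are comparable. When $J$ is not itself $\ast$-homogeneous this comparability is the subtle point; it is cleanest read with $J$ ranging over finite type $\ast$-homogeneous ideals in $M(I)$, which is all that (4) needs.

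For (3): if $J\supseteq I$ is a proper $\ast$-invertible $\ast$-ideal, then $J$ is of finite type (finite character plus $\ast$-invertibility) and lies in no maximal $\ast$-ideal but $M(I)$, so $J$ is finite type $\ast$-homogeneous with $M(J)=M(I)$; that is (afg1). For (afg2) put $I_{1}:=(IJ^{-1})^{\ast}$: since $I\subseteq J$ and $J$ is $\ast$-invertible, $I_{1}$ is an integral, $\ast$-invertible, finite type $\ast$-ideal containing $I$, hence $\ast$-homogeneous in $M(I)$, and $(JI_{1})^{\ast}=I$. Applying (afg2) of $I$ to the test ideal $K:=I_{1}$ gives $(I_{1}^{m})^{\ast}$ principal for some $m$ (again $I^{r}+I_{1}=I_{1}$ and $I_{1}$ is $\ast$-invertible); combined with (1) at a common exponent, the identity $(I^{m})^{\ast}=((J^{m})^{\ast}(I_{1}^{m})^{\ast})^{\ast}$ with two of its three factors principal forces $(J^{m})^{\ast}$ principal. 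With a principal $\ast$-power of $J$ available, (afg2) for $J$ is obtained by running (afg2) of $I$ against an arbitrary $K\subseteq M(J)$ and transporting the resulting principal $\ast$-power-closure along $(JI_{1})^{\ast}=I$, absorbing the now-principal $\ast$-powers of $I_{1}$.

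For (4): $(IJ)^{\ast}$ is $\ast$-invertible as a $\ast$-product of $\ast$-invertible ideals, giving (afg1), and it is finite type $\ast$-homogeneous with $M((IJ)^{\ast})=M(I)=M(J)$, since any maximal $\ast$-ideal over $IJ$ contains $I$ or $J$ by primeness and hence equals $M(I)$ — this is exactly the observation recalled at the start of Section \ref{S2} that a $\ast$-product of similar $\ast$-homogeneous ideals is $\ast$-homogeneous in the common maximal $\ast$-ideal, so $(IJ)^{\ast}$ is similar to both $I$ and $J$. For (afg2), part (1) applied to $I$ and to $J$ produces an exponent $N$ with $(I^{N})^{\ast}$ and $(J^{N})^{\ast}$ both principal, whence $((IJ)^{N})^{\ast}=((I^{N})^{\ast}(J^{N})^{\ast})^{\ast}$ is principal; given $K\subseteq M(I)$ one then feeds $K$ into (afg2) of $I$, raises the exponents to a common multiple of $N$ (legitimate because a principal $\ast$-power-closure is $\ast$-invertible, so raising its generators to a power merely raises the generator of the principal ideal), and reassembles a principal $\ast$-power-closure for $((IJ)^{\ast})^{r}+K$. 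The main obstacle in (2)--(4) is precisely this bookkeeping of the two independent exponents $r,m$ in (afg2): one keeps passing between ``$\ast$-power computed on generators'', available because the ideals in sight are $\ast$-invertible, and ``principal $\ast$-power-closure, hence $\ast$-invertible, hence all higher $\ast$-power-closures principal''; the subtlest single point is the comparability in (2) for a merely finitely generated $J$.
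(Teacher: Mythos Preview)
Your argument for (1) matches the paper's. The real gap is in (2): the localization step does not deliver what you claim. Knowing that $(I^{m}+J_{m})^{\ast}=dD$ and that $I D_{M(I)}$ is principal gives you comparability of $I^{m}D_{M(I)}$ and $J_{m}D_{M(I)}$ in the local ring, but $(J_{m})^{\ast}$ need not be $\ast$-homogeneous, so local comparability at $M(I)$ does not lift to an inclusion between $(I^{m})^{\ast}$ and $(J_{m})^{\ast}$ in $D$. You flag this yourself (``the subtlest single point''), but the sentence offered does not close it. The paper argues instead by dividing $(I^{m}+J_{m})^{\ast}=dD$ through by $d$: then $(I^{m}/d+J_{m}/d)^{\ast}=D$, and since $dD\supseteq I^{m}$ the principal ideal $dD$ is itself $\ast$-homogeneous in $M(I)$; one then argues that each of $(I^{m}/d)^{\ast},(J_{m}/d)^{\ast}$, if proper, lies in $M(I)$, forcing one of them to equal $D$ and hence one of $(I^{m})^{\ast},(J_{m})^{\ast}$ to equal $dD$ and contain the other.

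For (3) and (4) your routes are correct in spirit but needlessly indirect, and the closing phrases (``transporting \ldots\ along $(JI_{1})^{\ast}=I$, absorbing the now-principal $\ast$-powers'', ``reassembles a principal $\ast$-power-closure'') are not proofs. The paper's (3) is a one-liner with no need for $I_{1}=(IJ^{-1})^{\ast}$: since $I\subseteq J$ one has $(J^{s}+G)^{\ast}=(I^{s}+J^{s}+G)^{\ast}=(I^{s}+(J^{s}+G))^{\ast}$ for every $\ast$-homogeneous $G\subseteq M(J)=M(I)$, and (afg2) for $I$ applied to the $\ast$-homogeneous ideal $(J^{s}+G)^{\ast}$ gives the required principal closure directly. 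For (4) the paper uses (2), not (1): from $(I^{m})^{\ast}\supseteq (J^{m})^{\ast}$ (or the reverse) one gets $(I^{mr}J^{mr}+G_{m})^{\ast}=(J^{2mr}+H)^{\ast}$ for a suitable $H$, and then a single appeal to (afg2) of $J$ finishes; this sidesteps the bookkeeping of two independent principal powers that you describe as the ``main obstacle''.
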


\begin{proof}
(1) If $I$ is $\ast $-afg, $(I^{r})^{\ast }=(f),$ for some $r\in N$ and $%
f\in D$ by definition and we can choose $r$ to be minimum. ($(I+I)_{r}=(f)$)

(2) By definition, if $I$ is $\ast $-afg, we also have ($(I^{m}+J_{m})^{\ast
}=(d),$ for each finitely generated ideal $J.$ Dividing both sides by $d$ we
get $(I^{m}/d+J_{m}/d)^{\ast }=D.$ Now as $I^{m}$ and $J_{m}$ are contained
in $M(I),$ and no other maximal $\ast $-ideal, so $(I^{m}/d)^{\ast }$ and $%
(J_{m}/d)^{\ast }$ have no choice but to be in $M(I),$ if non-trivial. So, $%
(I^{m}/d)^{\ast }=D$ or $(J_{m}/d)^{\ast }=D.$ Thus if $(I^{m})^{\ast }=dD,$
then $(I^{m})^{\ast }=(d)\supseteq (J_{m})^{\ast }$ and if $(J_{m})^{\ast
}=dD$ then $I^{m}\subseteq J_{m}=dD.$ Thus by (afg2) $(I^{m})^{\ast }$ is
principal and contains $(J_{m})^{\ast }$ or $(J_{m})^{\ast }$ is principal
and contains $(I^{m})^{\ast },$ for some $m\in N.$

(3) Note that as $J\supseteq I$ we have $J^{r}\supseteq I^{r}$ for all
positive integers $r.$ Next for every finitely generated ideal $F$ such that 
$F^{\ast }\supseteq J$ $^{s}$ for some $s$ we have $F^{\ast
}=(J^{s}+G)^{\ast }=(J^{s}+I^{s}+G)^{\ast }=(I^{s}+(J^{s}+G))^{\ast }$ and
so $(F_{m})^{\ast }=(d)$ for some positive integer $m$ and for each $\ast $%
-homogeneous ideal $G.)$ (4) If $I,J$ are two similar $\ast $-afg
homogeneous ideals then $(IJ)^{\ast }$ is similar to both $I$ and $J.${} $%
(IJ)^{\ast }$ is $\ast $-invertible and $\ast $-homogeneous and of course
similar to both $I$ and $J.$ We have to show that for each $\ast $%
-homogeneous ideal $G,$ for some $r\in N,$ $(I^{r}J^{r}+G)_{m}^{\ast }=(d)$
for some $m.$ Let $F=(I^{r}J^{r}+G)^{\ast }.$ By (2) we know that $%
I^{m}\supseteq J^{m}$ or $J^{m}\supseteq I^{m},$ say $I^{m}\supseteq J^{m}.$
Now consider $(F_{m})^{\ast }=(I^{mr}J^{mr}+G_{m})^{\ast }\supseteq
(J^{2mr}+G_{m})$ or $(F_{m})^{\ast }=(J^{2mr}+H)^{\ast }$ and by definition $%
(F_{mt})^{\ast }$ is principal for some $t.$
\end{proof}

Now define a $\ast $-afg semi homogeneous domain ($\ast $-afg-SHD) as: $D$
is a $\ast $-afg-SHD if every nonzero non unit of $D$ is expressible as a $%
\ast $-product of finitely many $\ast $-afg homogeneous ideals. Indeed $D$
is a $\ast $-afg-SHD is a $\ast $-SHD whose $\ast $-homogeneous ideals are $%
\ast $-afg homogeneous. (S. Xing, a student of Wang Fanggui, is working with
me on this topic. Xing, incidentally, is also at Chengdu University, China.
Now Dan Anderson has also joined in and there's a possibility that the
definition will be completely twisted out of shape.)

Next, each of the definitions of homogeneous elements can actually give rise
to $\ast $-potent domains in the same manner as the $\ast $-super potent
domains of \cite{HZ 2019}. In \cite{HZ 2019}, for a star operation $\ast $
of finite character, a $\ast $-homogeneous ideal is called $\ast $-rigid.
The $\ast $-maximal ideal containing a $\ast $-homogeneous ideal $I$ may be
called a $\ast $-potent maximal $\ast $-ideal, as we have already done. Next
we may call the $\ast $-homogeneous ideal $I$ $\ast $-super-homogeneous if
each $\ast $-homogeneous ideal $J$ containing $I$ is $\ast $-invertible and
we may call a $\ast $-potent domain $D$ $\ast $-super potent if every
maximal $\ast $- ideal $I$ of $D$ contains a $\ast $-super homogeneous
ideal. But then one can study $\ast $-A-potent domains where A refers to a $%
\ast $-homogeneous ideal that corresponds to a particular definition. For
example a $\ast $-homogeneous ideal $J$ is said to be of type $1$ in \cite%
{AZ 2019} if $\sqrt{J}=M(J).$ So we can talk about $\ast $-type $1$ potent
domains as domains each of whose maximal $\ast $-ideals contains a $\ast $%
-homogeneous ideal of type $1.$ The point is, to each suitable definition
say A of a $\ast $-homogeneous ideal we can study the $\ast $-A-potent
domains as we studied the $\ast $-super potent domains in \cite{HZ 2019}. Of
course the theory corresponding to definition A would be different from that
of other $\ast $-potent domains. For example each of the maximal $\ast $%
-ideal of the $\ast $-type $1$ potent domain would be the radical of a $\ast 
$-homogeneous ideal etc. Now as it is usual we present some of the concepts
that have some direct and obvious applications, stemming from the use of $%
\ast $-homogeneous ideals. For this we select the $\ast $-f-potent domains
for a study.

\subsection{$\ast $-f-potent domains}

Let $\ast $ be a finite type star operation defined on an integral domain $D$%
. Call a nonzero non unit element $r$ of $D$ $\ast $-factorial rigid ( $\ast 
$-f-rigid) if $r$ belongs to a unique maximal $\ast $-ideal and every finite
type $\ast $-homogeneous ideal containing $r$ is principal. Indeed if $r$ is
a $\ast $-f-rigid element then $rD$ is a $\ast $-f- homogeneous ideal and
hence a $\ast $-super homogeneous ideal. So the terminology and the theory
developed in \cite{AZ 2019} applies. Note here that every non unit factor $s$
of a $\ast $-f-rigid element $r$ is $\ast $-f-rigid because of the
definition. Note also that if $r,s$ are similar $\ast $-f-rigid elements
(i.e. $rD,$ $sD$ are similar $\ast $-f-homogeneous ideals) then $rs$ is a $%
\ast $-f-rigid element similar to $r$ and $s$ and so if $r$ is $\ast $%
-f-rigid then $r^{n}$ is $\ast $-f-rigid for any positive integer $n.$

\begin{example}
\label{Example A}. Every prime element is a $t$-f-rigid element.
\end{example}

Call a maximal $\ast $-ideal $M$ $\ast $-f-potent if $M$ contains a $\ast $%
-f-rigid element and a domain $D$ $\ast $-f-potent if every maximal $\ast $%
-ideal of $D$ is $\ast $-f-potent.

\begin{example}
\label{Examples B}. UFDs PIDs, Semirigid GCD domains, prime potent domains
are all $t$-f-potent.
\end{example}

(domains in which every maximal $t$-ideal contains a prime element may be
called prime potent. Indeed a prime element generates a maximal $t$-ideal 
\cite[13.5]{H-K 1998}. (So a domain in which every maximal $t$-ideal
contains a prime element is simply a domain in which every maximal $t$-ideal
is principal.)

The definition suggests right away that if $r$ is $\ast $-f-rigid and $x$
any element of $D$ then $(r,x)^{\ast }=sD$ for some $s\in D$ and applying
the $v$-operation to both sides we conclude that $GCD(r,x)=(r,x)_{v}$ of $r$
exists with every nonzero element $x$ of $D$ and that for each pair of
nonzero factors $u,v$ of $r$ we have $u|v$ or $v|u$; that is $r$ is a rigid
element of $D$, in Cohn's terminology \cite{Coh 1968}. Indeed it is easy to
see, if necessary with help from \cite{AZ 2019}, that a finite product of $%
\ast $-f-rigid elements is uniquely expressible as a product of mutually $%
\ast $-comaximal $\ast $-f-rigid elements, up to order and associates and
that if every nonzero non unit of $D$ is expressible as a product of $\ast $%
-f-rigid elements then $D$ is a semirigid GCD domain of \cite{Zaf 1975}.
Also, as we shall show below, a $t$-f-potent domain of $t$-dimension one
(i.e. every maximal $t$-ideal is of height one) is a GCD domain of finite $t$%
-character. But generally a $t$-f-potent domain is far from being a GCD
domain. Before we delve into examples, let's prove a necessary result, by
mimicking Theorem 4.12 of \cite{CMZ 1978} and its proof. (We shall also use
Theorem 4.21 of \cite{CMZ 1978}, in the proofs of results below.)

\begin{proposition}
\label{Proposition B1} Let $D$ be an integral domain and let $L$ be an
extension of the field of fractions $K$ of $D.$ Then each ideal $I$ of $%
R=D+XL[X]$ is of the form $f(X)FR=f(X)(F+XL[X])$, where $F$ is a nonzero $D$%
-submodule of $L$ such that $f(0)F\subseteq D$ and $f(X)\in L[X]$. The
finitely generated ideals of $R$ are of the form $f(X)JR$, where $J$ is a
finitely generated $D$-submodule of $L$ and $f(X)\in R$.
\end{proposition}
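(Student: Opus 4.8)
The plan is to pass to the overring $L[X]$, which is a principal ideal domain, extract there the polynomial gcd of the ideal, and then analyze the module that is left over. Throughout, write $R = D + XL[X] \subseteq L[X] \subseteq L(X)$, and keep in mind the two structural facts that make everything work: $D$ is exactly the set of constant terms of elements of $R$, while $XL[X]$ sits entirely inside $R$.

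First I would treat an arbitrary nonzero ideal $I$ of $R$. Since $L[X]$ is a PID, $IL[X] = f(X)L[X]$ for some $f(X) \in L[X]\setminus\{0\}$; and since $I \subseteq IL[X] = f(X)L[X]$, the set $J := f(X)^{-1}I$ is an $R$-submodule of $L[X]$ with $JL[X] = L[X]$. The crux is a small lemma: \emph{any} $R$-submodule $J$ of $L[X]$ with $JL[X] = L[X]$ contains $XL[X]$. Indeed, write $1 = \sum_i a_i b_i$ with $a_i \in J$, $b_i \in L[X]$; then for $\ell \in L$ and $n \ge 1$, $X^n\ell = \sum_i (X^n\ell\, b_i)\, a_i$ with each factor $X^n\ell\, b_i \in XL[X] \subseteq R$, so $X^n\ell \in J$. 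Granting this, any $g \in J$ splits as $g = g(0) + (g - g(0))$ with $g - g(0) \in XL[X] \subseteq J$, so $g(0) \in F := J \cap L$; hence $J = F + XL[X]$, where $F$ is a $D$-submodule of $L$ (since $D \subseteq R$) and $F \ne 0$ (else $J = XL[X]$ and $JL[X] \ne L[X]$). As $F \ne 0$ and $L$ is a field, $F$ contains a unit of $L[X]$, so $FL[X] = L[X]$ and therefore $FR = FD + X(FL[X]) = F + XL[X] = J$. Thus $I = f(X)J = f(X)FR = f(X)(F + XL[X])$, and $I \subseteq R$ forces exactly $f(0)F \subseteq D$ (the condition that the constant terms $f(0)\ell$, $\ell \in F$, lie in $D$). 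That gives the first assertion.

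For the second assertion I would start from $I = (g_1,\dots,g_k)R$ and apply the first part to get $I = f(X)(F + XL[X])$ with $f(X) \in L[X]$, $0 \ne F$ a $D$-submodule of $L$, $f(0)F \subseteq D$. Then rescale: choose $0 \ne \ell_0 \in F$ and replace $f(X)$ by $\tilde f(X) := \ell_0 f(X)$ and $F$ by $G := \ell_0^{-1}F$. Still $I = \tilde f(X)(G + XL[X])$ and $\tilde f(0)G \subseteq D$, but now $\tilde f(0) = \ell_0 f(0) \in D$, so $\tilde f(X) \in R$; and $1 \in G$, so $\tilde f(X) \in I$, hence $\tilde f(X)R \subseteq I$. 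Writing $g_i = \tilde f(X)u_i(X)$ with $u_i(X) \in G + XL[X] \subseteq L[X]$, set $J := D + Du_1(0) + \cdots + Du_k(0)$, a finitely generated $D$-submodule of $L$ (with $\tilde f(0)J \subseteq D$, since $g_i(0) = \tilde f(0)u_i(0) \in D$). On one side, $g_i = \tilde f(X)u_i(0) + \tilde f(X)(u_i(X) - u_i(0)) \in \tilde f(X)(J + XL[X])$, and since $1 \in J$ gives $J + XL[X] = JR$ an $R$-module, this forces $I \subseteq \tilde f(X)(J + XL[X])$. On the other side, $\tilde f(X)XL[X] \subseteq \tilde f(X)R \subseteq I$, $\tilde f(X)D \subseteq \tilde f(X)R \subseteq I$, and $\tilde f(X)u_i(0) = g_i - \tilde f(X)(u_i(X) - u_i(0)) \in I$, so $\tilde f(X)(J + XL[X]) \subseteq I$. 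Hence $I = \tilde f(X)(J + XL[X]) = \tilde f(X)JR$ with $\tilde f(X) \in R$ and $J$ finitely generated over $D$, as claimed.

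The hard part is really just the lemma that an $R$-submodule $J$ of $L[X]$ with $JL[X] = L[X]$ must contain all of $XL[X]$; after that, everything reduces to bookkeeping with constant terms, exploiting that $D$ is the constant part of $R$ and $XL[X]$ lies in $R$. The one other delicate point, in the finitely generated case, is the rescaling that simultaneously moves $f(X)$ into $R$ and places $1$ inside the $D$-module $G$ — this is what lets the finitely many generators $g_1,\dots,g_k$ pin down a finitely generated $J$.
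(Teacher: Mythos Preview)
Your proof is correct and follows essentially the same architecture as the paper's: pass to the PID $L[X]$, factor out $f(X)$, and identify the remaining piece as $F + XL[X] = FR$ via the key fact that such a module contains $XL[X]$. The paper obtains this key fact by citing \cite[Lemma 1.1]{CMZ 1986} (stated for ideals of $R$, then applied after a case split on whether $IL[X]=L[X]$), whereas you prove it directly for $R$-submodules of $L[X]$ and thereby avoid the case split entirely.

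In the finitely generated part your rescaling is also slightly different: the paper divides by $h=f(0)$ when $h\neq 0$ (and treats $f(0)=0$ separately), while you multiply by a nonzero $\ell_0\in F$, which works uniformly and simultaneously places $1$ in the new module $G$. You are also more explicit than the paper about why the resulting $D$-module is finitely generated; the paper simply asserts this, though the justification (read off the constant terms of $f(X)^{-1}g_i$) is the same computation you carry out. Net effect: same proof, but yours is more self-contained and handles the edge cases more cleanly.
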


\begin{proof}
First observe that a subset of $R$ of the form $f(X)FR,$ where $%
f(0)F\subseteq D,$ is in fact an ideal of $R$. According to \cite[Lemma 1.1]%
{CMZ 1986}, the following are equivalent for an ideal $I$ of $R$: (1) $I$ is
such that $I\cap D\neq 0,$ (2) $I\supseteq XL[X]$ and (3) $IL[X]=L[X]$.
Further if any of these hold, then $I=I\cap D+XL[X]=(I\cap D)R$ and taking $%
f=1$, $F=I\cap D$ we have the stated form. Let's now consider the case when $%
IL[X]\neq L[X].$ In this case $IL[X]=f(X)L[X]$ where $f(X)$ is a variable
polynomial of $L[X].$ Then there is a nonzero element $\alpha \in L$ such
that $\alpha f(X)\in I$. Let $F=\{\alpha \in L$ $|$ $\alpha f(X)\in I\}$.
Then $F$ is a $D$-submodule of $L$. Since $F\neq 0$ and $f(X)F\subseteq
I,I\supseteq f(X)FR=f(X)(F+XL[X]).$ Now if $h(X)\in I$, then $%
h(X)=f(X)(\alpha _{0}+...+\alpha _{n}X^{n})$, where $\alpha _{0},,...,\alpha
_{n}\in L$ whence $h(X)=$ $\alpha _{0}f(X)+h\prime (X)$, where $h\prime
(X)\in f(X)XL[X]\in I$. Hence $\alpha _{0}\in F$ and $h(X)\in f(X)(F+XL[X])$%
. Thus $I=f(X)(F+XL[X])=f(X)FR$, from which it also follows that $%
f(0)F\subseteq D$. Finally let $I$ be finitely generated, then by the above
we have $I=f(X)FR$ where $F$ is a finitely generated $D$-submodule of $L$
and $f(X)\in L[X].$ If $f(0)=0,$ then $f(X)$ is obviously in $R.$ So let's
consider $f(0)=h\neq 0$ and $F=(\alpha _{1},\alpha _{2},...,\alpha _{r})D$.
Since $f(0)F\subseteq D$ we must have $h\alpha _{i}\in D.$ But then $%
I=f(X)FR $ can be written as $I=\frac{f(X)}{h}(h\alpha _{1},h\alpha
_{2},...,h\alpha _{r})R$ where $\frac{f(X)}{h}\in R.$
\end{proof}

(I was struggling with an earlier version of Proposition \ref{Proposition B1}
and Prof. T. Dumitrescu's suggested improvement for it when I remembered
Theorem 4.12 of \cite{CMZ 1978}. I am thankful for his input.)

\begin{lemma}
\label{Lemma B2} Let $D$ be an integral domain and let $L$ be an extension
field of the field of fractions $K$ of $D.$ Then $d\in D\backslash (0)$ is a 
$t$-f-homogeneous element of $D$ if and only if $d$ is a $t$-f-homogeneous
element of $D+XL[X]$.
\end{lemma}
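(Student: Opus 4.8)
The plan is to push every statement about ideals of $R=D+XL[X]$ that contain the fixed nonzero element $d\in D$ back to $D$, using the description of the ideals of $R$ in Proposition \ref{Proposition B1} together with the behaviour of the $t$-operation under the extension $D\subseteq R$ recorded in Theorem 4.21 of \cite{CMZ 1978}. Recall that, by definition, a nonzero element is $t$-f-homogeneous in a domain exactly when it is a non-unit lying in a unique maximal $t$-ideal and every finite-type $t$-homogeneous ideal containing it is principal (the $t$-f-rigid condition of \S\ref{S2}); since the units of $R$ are the units of $D$, the non-unit clause matches automatically, and the two things to be matched between $D$ and $R$ are: the maximal $t$-ideals containing $d$, and the finite-type $t$-homogeneous ideals containing $d$ together with their principality. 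The elementary half of the argument is a dictionary for ideals of $R$ containing $d$. Since $0\neq d\in D$ and $D\cap XL[X]=0$, any ideal $I$ of $R$ with $d\in I$ has $I\cap D\neq 0$, so by the lemma of \cite{CMZ 1986} quoted inside the proof of Proposition \ref{Proposition B1} one gets $I=(I\cap D)+XL[X]=(I\cap D)R$; and if $I$ is finitely generated, writing its generators as $a_{i}+Xh_{i}(X)$ with $a_{i}\in I\cap D$ and using that some $a_{j}\neq 0$ (whence $a_{j}R\supseteq XL[X]$) shows $I=(a_{1},\dots ,a_{n})R$, so $I_{0}:=I\cap D$ is a finitely generated ideal of $D$ containing $d$. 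Conversely, for a finitely generated ideal $I_{0}$ of $D$ with $d\in I_{0}$, the ideal $I_{0}R=I_{0}+XL[X]$ is finitely generated, satisfies $I_{0}R\cap D=I_{0}$, and is proper in $R$ iff $I_{0}$ is proper in $D$. Finally, in $L[X]$ a constant divisor of the nonzero constant $d$ is again a nonzero constant, and a constant lies in $R$ iff it lies in $D$; hence a principal ideal $gR$ of $R$ containing $d$ has $g\in D\setminus\{0\}$, and then $gR=gD+XL[X]$ with $gR\cap D=gD$.

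Next I would transport the $t$-structure via Theorem 4.21 of \cite{CMZ 1978}: for a nonzero ideal $A$ of $D$ one has $(AR)_{t}=A_{t}R$ (so $AR$ is a finite-type $t$-ideal of $R$ whenever $A$ is finitely generated, and $(AR)_{t}\cap D=A_{t}$), and the maximal $t$-ideals of $R$ that contain $XL[X]$ are precisely the ideals $MR=M+XL[X]$ with $M$ a maximal $t$-ideal of $D$ (the remaining maximal $t$-ideals of $R$ meet $D$ in $0$, hence do not contain $d$). Since $dR\supsetneq XL[X]$, every maximal $t$-ideal of $R$ containing $d$ contains $XL[X]$, hence is of the form $MR$; so $M\mapsto MR$ is a bijection between the maximal $t$-ideals of $D$ containing $d$ and those of $R$ containing $d$, and $dD$ is proper iff $dR$ is. In particular $d$ lies in a unique maximal $t$-ideal of $D$ iff $d$ lies in a unique maximal $t$-ideal of $R$; this settles the ``homogeneous'' half of both definitions (alternatively one can route this through Lemma \ref{Lemma X4}, checking via the dictionary that for finitely generated $X_{0},Y_{0}$ containing $d$ the ideal $(X_{0}R+Y_{0}R)_{t}$ is proper iff $(X_{0}+Y_{0})_{t}$ is).

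For the factoriality clause I would argue as follows. Let $H$ be a finite-type $t$-homogeneous ideal of $R$ with $d\in H$; write $H=B_{t}$ with $B$ finitely generated, and after replacing $B$ by $(B,d)$ assume $d\in B$, so $B=I_{0}R$ with $I_{0}$ a finitely generated ideal of $D$ containing $d$ and $H=(I_{0}R)_{t}=(I_{0})_{t}R$. By the bijection of maximal $t$-ideals, $(I_{0})_{t}$ is a finite-type $t$-homogeneous ideal of $D$ containing $d$; if the condition on $D$ holds it is principal, say $(I_{0})_{t}=gD$, and then $H=gDR=gR$ is principal. Conversely, let $H_{0}=(I_{0})_{t}$ be a finite-type $t$-homogeneous ideal of $D$ with $d\in I_{0}$ ($I_{0}$ finitely generated); then $H_{0}R=(I_{0}R)_{t}$ is a finite-type $t$-homogeneous ideal of $R$ containing $d$, so under the condition on $R$ it is principal, $H_{0}R=gR$; since $d\in gR$, the dictionary forces $g\in D\setminus\{0\}$, whence $H_{0}=H_{0}R\cap D=gR\cap D=gD$ is principal. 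Together with the previous paragraph this gives the stated equivalence.

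The step I expect to be the real content — and the one place I would invoke \cite{CMZ 1978} rather than compute — is the compatibility of the $t$-operation with the pullback $D\subseteq R$, namely the identity $(AR)_{t}=A_{t}R$ and the identification of the maximal $t$-ideals of $R$ lying over $D$. If one wanted a self-contained proof, the crux would be to show $(AR)^{-1}=A^{-1}R$ for a nonzero fractional ideal $A$ of $D$, which reduces to determining the $z\in L(X)$ with $zXL[X]\subseteq R$ (precisely $z\in L[X]$) and then observing that $zAR\subseteq R$ iff $z(0)\in A^{-1}$; everything else is the constant-polynomial bookkeeping of the first paragraph.
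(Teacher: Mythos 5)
Your proposal is correct and follows essentially the same route as the paper: both rest on the description of ideals of $D+XL[X]$ meeting $D$ as $(I\cap D)+XL[X]$, the compatibility $(AR)_t=A_tR$ of the $t$-operation with the $D+M$ construction (the paper cites \cite[Proposition 2.4]{AR 1988} where you cite \cite{CMZ 1978}), and the resulting bijection between maximal $t$-ideals containing $d$. If anything, your write-up is the more complete one, since the paper's proof only spells out the direction from $D$ to $D+XL[X]$ while you carry out the converse explicitly.
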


\begin{proof}
Let's first note that $D+XL[X]$ has the $D+M$ form. Thus if $I$ is a nonzero
ideal of $D$ then $(I+XL[X])_{v}=I_{v}+XL[X]=I_{v}(D+XL[X]),$ by \cite[%
Proposition 2.4]{AR 1988} and using this we can also conclude that $%
(I+XL[X])_{t}=I_{t}+XL[X]=I_{t}(D+XL[X]).$ Now let $d$ be a $t$%
-f-homogeneous element of $D$ then $dD$ is a $t$-f-homogeneous ideal, so any 
$t$-ideal of finite type, of $D,$ containing $dD$ is principal. Next
consider $d\in D+XL[X].$ Any $t$-ideal of finite type $F$ of $R$ containing $%
d$ intersects $D$ and so has the form $(F\cap D)+XL[X]$, according to \cite[%
Lemma 1.1]{CMZ 1986}. Consequently $F$ contains $dD+XL[X].$ We show that $F$
is principal. For this let $%
F=(a_{1}+Xf_{1}(X),...,a_{n}+Xf_{n}(X)_{t}=((a_{1},...,a_{n})+XL[X])_{v}$ $%
=((a_{1},...,a_{n})_{v}+XL[X]).$ But $((a_{1},...,a_{n})_{v}+XL[X])=F%
\supseteq dD+XL[X]$ forces $(a_{1},...,a_{n})_{v}\supseteq dD$. Also $dD$
being $t$-f-rigid, $(a_{1},...,a_{n})_{v}$ must be principal whence $F$ is
principal Now note that according to \cite{CMZ 1986}, every prime ideal $M$
of $R$ that intersects $D$ is of the form $M\cap D+XL[X]$ and using the
above mentioned result of \cite[Proposition 2.4]{AR 1988} we can show that
every maximal $t$-ideal $M$ that intersects $D$ is of the form $M\cap
D+XL[X] $ where $M\cap D$ is a maximal $t$-ideal of $D$ and that,
conversely, if $m$ is a maximal ideal of $D$ then $m+XL[X]$ is a maximal
ideal of $R.$ Thus, finally, if $m$ is the unique maximal $t$-ideal of $D$
containing $d$ then $m+XL[X]$ is a maximal $t$-ideal of $R$ containing $d$
and if $N$ were another maximal $t$-ideal containing $d$ then $N\cap D$
would be another maximal $t$-ideal of $D$ containing $d$ a contradiction.
Thus $d$ is a $t$-f-homogeneous ideal of $R.$
\end{proof}

\begin{proposition}
\bigskip \label{Proposition B3} Let $D$ be an integral domain and let $L$ be
an extension field of the field of fractions $K$ of $D.$ Then $D$ is $t$%
-potent if and only if $R=D+XL[X]$ is.
\end{proposition}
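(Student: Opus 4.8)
The plan is to sort the maximal $t$-ideals of $R=D+XL[X]$ into those that meet $D$ and those that do not, and to pass $t$-homogeneous ideals across the projection $R\to R/XL[X]=D$. Throughout I will freely use the facts isolated in the proof of Lemma~\ref{Lemma B2}: that $(I+XL[X])_{t}=I_{t}+XL[X]$ for every ideal $I$ of $D$, and that $m\mapsto m+XL[X]$ is a bijection between the maximal $t$-ideals $m$ of $D$ and the maximal $t$-ideals $\mathcal M$ of $R$ with $\mathcal M\cap D\neq 0$, with inverse $\mathcal M\mapsto\mathcal M\cap D$. I will also use the elementary remark that for $0\neq a\in D$ one has $aR\supseteq XL[X]$, since $X\ell/a\in XL[X]\subseteq R$ for every $\ell\in L$; hence $(a_{1},\dots ,a_{n})R=(a_{1},\dots ,a_{n})+XL[X]$ as soon as some $a_{i}\neq 0$, and in that case this is a \emph{finitely generated} ideal of $R$ whose $t$-closure is $(a_{1},\dots ,a_{n})_{t}+XL[X]$.

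$(\Rightarrow)$ Assume $D$ is $t$-potent and let $\mathcal M$ be a maximal $t$-ideal of $R$. If $\mathcal M\cap D\neq 0$, write $\mathcal M=m+XL[X]$ with $m$ a maximal $t$-ideal of $D$; by hypothesis $m$ contains a $t$-homogeneous ideal $I$, and since $t$ has finite character we may write $I=(a_{1},\dots ,a_{n})_{t}$ with some $a_{i}\neq 0$. Put $H:=\bigl((a_{1},\dots ,a_{n})R\bigr)_{t}=I+XL[X]\subseteq \mathcal M$. Then $H$ is a finite-type $t$-ideal, and it is $t$-homogeneous: a maximal $t$-ideal of $R$ containing $H$ contains $0\neq a_{1}\in D$, hence meets $D$, hence is $\mathfrak n+XL[X]$ for a maximal $t$-ideal $\mathfrak n\supseteq I$ of $D$, which forces $\mathfrak n=M(I)$; so $M(H)=M(I)+XL[X]$, and $\mathcal M$ is $t$-potent. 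If instead $\mathcal M\cap D=0$, then by Proposition~\ref{Proposition B1} $\mathcal M=f(X)FR$ with $0\neq f\in L[X]$, and $f$ is non-constant (a nonzero constant $f$ would give $0\neq f(0)F\subseteq \mathcal M\cap D$, since $f(0)F\subseteq D$). If $f$ has an irreducible factor $q\neq X$, then $\mathcal M\subseteq qL[X]\cap R=\widehat q R$, where $\widehat q:=q/q(0)\in R$; here $\widehat q R$ is a principal \emph{maximal} ideal of $R$, because $R/\widehat q R$ embeds into the field $L[X]/(q)$ and this embedding is onto (since $X$, being coprime to $q$, already fills $L[X]/(q)$ modulo $q$). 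Maximality of $\mathcal M$ among $t$-ideals forces $\mathcal M=\widehat q R$, a principal — hence finite-type — $t$-ideal lying in no maximal $t$-ideal but itself, so $\mathcal M$ is $t$-homogeneous and in particular $t$-potent. The only remaining option, $f$ a power of $X$, gives $\mathcal M\subseteq XL[X]$, which is impossible when $D$ is not a field (then $m+XL[X]\supsetneq XL[X]\supseteq \mathcal M$ for any maximal $t$-ideal $m$ of $D$, contradicting maximality of $\mathcal M$), while when $D$ is a field it forces $\mathcal M=XL[X]$, which is then a maximal $t$-ideal containing the $t$-homogeneous ideal $XR$. Hence every maximal $t$-ideal of $R$ is $t$-potent.

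$(\Leftarrow)$ Assume $R$ is $t$-potent; we may assume $D$ is neither a field nor $t$-local, a $t$-local domain being visibly $t$-potent. Let $m$ be a maximal $t$-ideal of $D$, so $\mathcal M:=m+XL[X]$ is a maximal $t$-ideal of $R$ and therefore contains a $t$-homogeneous ideal $H$, necessarily with $M(H)=\mathcal M$. We cannot have $H\subseteq XL[X]$: otherwise $H\subseteq (XL[X])_{t}\subseteq (m'+XL[X])_{t}=m'+XL[X]$ for \emph{every} maximal $t$-ideal $m'$ of $D$, so $H$ would sit in at least two maximal $t$-ideals of $R$, against its $t$-homogeneity. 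Hence $H$ admits generators $h_{i}=a_{i}+Xf_{i}$ with some $a_{i}\neq 0$. Then $I_{0}:=(a_{1},\dots ,a_{n})_{t}$ is a nonzero finite-type $t$-ideal of $D$ with $I_{0}\subseteq m$, and $\widetilde H:=\bigl((a_{1},\dots ,a_{n})R\bigr)_{t}=I_{0}+XL[X]$ satisfies $H\subseteq \widetilde H\subseteq \mathcal M$; since any maximal $t$-ideal containing $\widetilde H$ contains $H$ and hence equals $\mathcal M$, $\widetilde H$ is $t$-homogeneous. Finally, if $I_{0}\subseteq m_{1}\cap m_{2}$ for maximal $t$-ideals $m_{1},m_{2}$ of $D$, then $m_{1}+XL[X]$ and $m_{2}+XL[X]$ are maximal $t$-ideals of $R$ containing $\widetilde H=I_{0}+XL[X]$, hence coincide, so $m_{1}=m_{2}$. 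Thus $I_{0}$ is $t$-homogeneous and $m$ is $t$-potent, whence $D$ is $t$-potent.

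The main obstacle is the second case of $(\Rightarrow)$: one has to pin down the maximal $t$-ideals of $R$ that miss $D$, and the clean route is to read off from Proposition~\ref{Proposition B1} that such an ideal is principal — generated, after normalizing, by an irreducible polynomial coprime to $X$ — with residue ring a field, so that it is its own unique enclosing maximal $t$-ideal and hence automatically $t$-homogeneous; the degenerate sub-case where $D$ is a field (so that $XL[X]$ itself is the relevant maximal $t$-ideal, handled by the $t$-homogeneous ideal $XR$) must be noted separately. Everything else is bookkeeping with the identity $(I+XL[X])_{t}=I_{t}+XL[X]$ and with the inclusion $aR\supseteq XL[X]$ for $0\neq a\in D$.
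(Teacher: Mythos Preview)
Your proof is correct and follows the same overall strategy as the paper: split the maximal $t$-ideals of $R$ according to whether they meet $D$, and pass $t$-homogeneous ideals across the correspondence $m\leftrightarrow m+XL[X]$ using $(I+XL[X])_t=I_t+XL[X]$.

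The one place you genuinely diverge is in handling maximal $t$-ideals $\mathcal M$ with $\mathcal M\cap D=0$. The paper outsources this: it quotes \cite[Lemma 1.2]{CMZ 1986} to say that any prime of $R$ not comparable with $XL[X]$ contains an element of the form $1+Xg(X)$, hence contains a prime element of that shape, hence is a principal (maximal) prime; and it cites \cite[Theorem 4.21]{CMZ 1978} for the accompanying structure of $t$-homogeneous ideals. You instead read off from Proposition~\ref{Proposition B1} that such an $\mathcal M$ has the form $f(X)FR$ with $f$ non-constant, pick an irreducible factor $q\neq X$, and show by an explicit quotient computation that $\widehat qR$ (with $\widehat q=q/q(0)$) is a maximal ideal of $R$, forcing $\mathcal M=\widehat qR$. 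Your route is more self-contained and makes the ``principal prime'' conclusion transparent; the paper's route is shorter because it appeals to prior literature. Both arrive at the same endpoint: every maximal $t$-ideal of $R$ missing $D$ is itself $t$-homogeneous.

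Your backward direction is also more carefully written than the paper's: you explicitly exclude the $t$-local case, argue that the $t$-homogeneous $H\subseteq m+XL[X]$ cannot lie inside $XL[X]$ (since it would then sit in $m'+XL[X]$ for a second maximal $t$-ideal $m'$), and then project to the constant terms to produce the $t$-homogeneous $I_0\subseteq m$. The paper leaves this descent largely implicit in its characterization of $t$-homogeneous ideals. One minor remark: your observation that ``$f$ is non-constant'' actually follows immediately from the dichotomy in Proposition~\ref{Proposition B1} (since $\mathcal M\cap D=0$ forces $\mathcal ML[X]\neq L[X]$), so your separate argument for it is redundant, though not incorrect.
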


\begin{proof}
Note that, according to \cite[Lemma 1.2]{CMZ 1986}, every prime ideal $P$ of 
$R$ that is not comparable with $XL[X]$ contains an element of the form $%
1+Xg(X)$, so must contain a prime element of the form $1+Xg(X)$ and so must
be a principal prime. We next show that a finitely generated ideal $%
F\nsubseteq XL[X]$ of $R$ is $t$-homogeneous if and only if $F$ is of the
form $A+XL[X],$ where $A$ is a $t$-homogeneous ideal of $D$ or generated by
a prime power of the form $(1+Xh(X))^{n},$ \cite[Theorem 4.21]{CMZ 1978}.
Obviously if $A$ is contained in a unique maximal $t$-ideal $P$ of $D$ then $%
A+XL[X]$ is contained in the maximal $t$-ideal $P+XL[X]$ and any maximal $t$%
-ideal that contains $A+XL[X]$ also contains $P+XL[X].$ Next, an ideal
generated by a prime power is $t$-homogeneous anyway. Conversely let $F$ be
a finitely generated nonzero ideal of $R.$ Then by Proposition \ref%
{Proposition B1}, $F=f(X)(J+XL[X])$ where $f(X)\in L[X]$ as $F$ is not
contained in $XL[X],$ $f(0)=1$ forcing $J$ to be a finitely generated ideal
of $D.$ If in addition $F$ has to be $t$-homogeneous then $F$ is either
contained in a prime ideal of the form $P+XL[X]$ or in a prime ideal
incomparable with $XL[X].$ In the first case $F=J+XL[X]$ where $J$ is a
rigid ideal belonging to $P$ and in the second case $F=f(X)R,$ \cite[Theorem
4.21]{CMZ 1978}.
\end{proof}

\begin{corollary}
\label{Corollary B4} Let $D$ be an integral domain and let $L$ be an
extension field of the field of fractions $K$ of $D.$ Then $D$ is $t$%
-f-potent if and only if $R=D+XL[X]$ is.
\end{corollary}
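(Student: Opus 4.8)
The plan is to enumerate the maximal $t$-ideals of $R=D+XL[X]$ and check the $t$-f-rigid condition on each, transferring between $D$ and $R$ via Lemma~\ref{Lemma B2}. Recall from the proofs of Lemma~\ref{Lemma B2} and Proposition~\ref{Proposition B3} (together with \cite{CMZ 1986}, \cite[Proposition~2.4]{AR 1988}) that a maximal $t$-ideal $\mathcal M$ of $R$ either meets $D$, in which case $\mathcal M=m+XL[X]$ with $m=\mathcal M\cap D$ a maximal $t$-ideal of $D$, or is incomparable with $XL[X]$, in which case it is a principal prime containing a prime element $1+Xg(X)$. (As usual I take $D\neq K$; if $D$ is a field the $D$-side has no maximal $t$-ideals and the statement is degenerate.)

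For the implication that $D$ $t$-f-potent forces $R$ $t$-f-potent, let $\mathcal M$ be a maximal $t$-ideal of $R$. If $\mathcal M\cap D=m\neq0$, pick a $t$-f-rigid element $d$ of $D$ lying in $m$; by Lemma~\ref{Lemma B2} $d$ is a $t$-f-rigid element of $R$, and $d\in m\subseteq\mathcal M$. If $\mathcal M\cap D=0$, then (the only primes of $R$ contained in $XL[X]$ being $0$ and $XL[X]$, and $XL[X]$ being non-maximal since it sits properly inside $m'+XL[X]$ for any maximal $t$-ideal $m'$ of $D$) $\mathcal M$ is incomparable with $XL[X]$, hence contains a prime element $1+Xg(X)$, which is $t$-f-rigid by Example~\ref{Example A}. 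Thus every maximal $t$-ideal of $R$ contains a $t$-f-rigid element.

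For the converse, suppose $R$ is $t$-f-potent and let $m$ be a maximal $t$-ideal of $D$, so $\mathcal M=m+XL[X]$ is a maximal $t$-ideal of $R$ and contains a $t$-f-rigid element $\rho$ of $R$; its unique maximal $t$-ideal is then $\mathcal M$. Now $\rho R$ is a finite type $t$-homogeneous ideal with $M(\rho R)=\mathcal M\supseteq XL[X]$, so by the description of finitely generated $t$-homogeneous ideals of $R$ in the proof of Proposition~\ref{Proposition B3}, $\rho R$ is not generated by a prime power $(1+Xh(X))^n$ (which would make $M(\rho R)=(1+Xh(X))R$, incomparable with $XL[X]$); hence either $\rho R=A+XL[X]$ with $A$ a finite type $t$-homogeneous ideal of $D$, or $\rho R\subseteq XL[X]$. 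In the first case, since $\rho R$ is a principal $t$-ideal containing $XL[X]$ its generator can be chosen in $D$ (a generator $\gamma$ with $\gamma R\supseteq XL[X]$ has $\gamma R\cap D\neq0$ by \cite[Lemma~1.1]{CMZ 1986}, forcing $\gamma(0)\neq0$ and then, as $R^{\ast}=D^{\ast}$, $\gamma$ to be an associate of an element of $D$); writing $\rho R=dR$ with $d\in D\setminus(0)$ and intersecting $A+XL[X]=dD+XL[X]$ with $D$ gives $A=dD\subseteq m$, and since $dR=\rho R$ is $t$-f-homogeneous, $d$ is a $t$-f-rigid element of $R$, so by Lemma~\ref{Lemma B2} a $t$-f-rigid element of $D$ lying in $m$. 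In the second case $\rho R\subseteq XL[X]\subseteq m'+XL[X]$ for every maximal $t$-ideal $m'$ of $D$, and uniqueness of $M(\rho R)$ forces $m$ to be the only maximal $t$-ideal of $D$; then each proper finitely generated ideal $C$ of $D$ is $t$-homogeneous, $\rho\in XL[X]\subseteq CR=C+XL[X]$, and $(CR)_{t}=C_{t}+XL[X]$ (by \cite[Proposition~2.4]{AR 1988}) is a finite type $t$-homogeneous ideal of $R$ containing $\rho$, hence principal by $t$-f-rigidity of $\rho$; intersecting with $D$ shows $C_{t}$ principal, so every nonzero non unit of $D$ is $t$-f-rigid and $m$ certainly contains one. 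Either way $m$ contains a $t$-f-rigid element of $D$, so $D$ is $t$-f-potent.

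The routine inputs are the structure of ideals of $R$ (Proposition~\ref{Proposition B1}, \cite{CMZ 1986}), the identities $CR=C+XL[X]$ and $(CR)_{t}=C_{t}+XL[X]$ for $0\neq C$ finitely generated in $D$, and the observation that a principal ideal of $R$ containing $XL[X]$ has a generator in $D$; none of these is deep. The one genuine subtlety, and the step I expect to be the main obstacle, is the alternative $\rho R\subseteq XL[X]$ in the converse: one must notice that this can occur only when $D$ is $t$-local, and then play $t$-f-rigidity of $\rho$ against the ideals $C+XL[X]$ to conclude that $D$ is $t$-f-potent after all.
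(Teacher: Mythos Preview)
Your argument is correct and follows the same outline as the paper: classify the maximal $t$-ideals of $R=D+XL[X]$ and use Lemma~\ref{Lemma B2} to transfer $t$-f-rigid elements between $D$ and $R$. The forward direction is identical to the paper's.

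For the converse, you are actually more careful than the paper. The paper simply asserts that the $t$-f-rigid element of $R$ sitting in $M+XL[X]$ is (or yields) a $t$-f-rigid element of $D$ in $M$ ``by Lemma~\ref{Lemma B2}'', without explaining why that element may be taken in $D$; your case split on whether $\rho R\subseteq XL[X]$ supplies precisely this missing justification. Your Case~2 (the $t$-local situation) is handled correctly.

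That said, Case~2 can be avoided by a shorter uniform argument: pick any nonzero $d\in m$ and form $(\rho,d)_{t}$. This is a finite-type $t$-homogeneous ideal of $R$ containing $\rho$ (its only maximal $t$-ideal is $m+XL[X]$ since it contains $\rho$), hence principal, say $eR$. From $e\mid d$ in $R$ with $d\in D\setminus\{0\}$ one gets $e\in D$ (your own degree argument), and any finite-type $t$-homogeneous ideal containing $e$ contains $eR\ni\rho$ and is therefore principal; so $e$ is $t$-f-rigid in $R$, hence in $D$ by Lemma~\ref{Lemma B2}, and $e\in m$. This sidesteps both the appeal to the structural classification in Proposition~\ref{Proposition B3} and the separate $t$-local case. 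Either route is fine; yours is longer but entirely sound.
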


\begin{proof}
Suppose that $D$ is $t$-f-potent. As in the proof of Proposition \ref%
{Proposition B3} every maximal $t$-ideal $P$ of $R$ that is not comparable
with $XL[X]$ contains an element of the form $1+Xg(X)$, so must contain a
prime element of the form $1+Xg(X)$ and so must be a principal prime. Next
the maximal $t$-ideals comparable with $XL[X]$ are of the form $P+XL[X]$
where $P$ is a maximal $t$-ideal of $D.$ Since $D$ is $t$-f-potent $P$
contains a $t$-f-rigid element, which is also a $t$-f-rigid element of $R,$
by Lemma \ref{Lemma B2}. So $P+XL[X]$ contains a $t$-f-rigid element of $R.$
In sum, every maximal $t$-ideal of $R$ contains a $t$-f-rigid element of $R$
and so $R$ is $t$-f-potent. Conversely suppose that $R$ is $t$-f-potent.
Then as for each maximal $t$-ideal $M$ of $D,$ $M+XL[X]$ is a maximal $t$%
-ideal, each $M$ contains a $t$-f-rigid element of $R$ and hence of $D,$ by
Lemma \ref{Lemma B2}. Thus each maximal $t$-ideal of $D$ contains a $t$%
-f-rigid element of $D.$
\end{proof}

Recall, from \cite{AAZ 1995}, that a GCD domain of finite $t$-character that
is also of $t$-dimension $1$ is termed as a generalized UFD (GUFD).

\begin{example}
\label{Example C} If $D$ is a UFD (GUFD, Semirigid GCD domain) and $L$ an
extension of the quotient field $K$ of $D,$ then the ring $D+XL[X]$ is a $t$%
-f-potent domain.
\end{example}

The $t$-f-potent domains and their examples are nice but we must show that
they have some useful properties. We start with the most striking property.
Here let $X$ be an indeterminate over $D.$ A polynomial $f=\sum $ $%
a_{i}X^{i} $ is called primitive if its content $%
A_{f}=(a_{0},a_{1},...,a_{n})$ generates a primitive ideal, i.e., $%
(a_{0},a_{1},...,a_{n})\subseteq aD$ implies $a$ is a unit and super
primitive if $(A_{f})_{v}=D.$ It is known that while a super primitive
polynomial is primitive a primitive polynomial may not be super primitive,
see e.g. Example 3.1 of \cite{AZ 2007}. A domain $D$ is called a PSP domain
if each primitive polynomial $f$ over $D$ is superprimitive, i.e. if $%
(A_{f})_{v}=D.$

\begin{proposition}
\label{Proposition E} A $t$-f-potent domain $D$ has the PSP property.
\end{proposition}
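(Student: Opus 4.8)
The plan is to argue by contradiction. Assume $f=\sum a_iX^i$ is primitive, with content $A_f=(a_0,\dots ,a_n)$, but that $(A_f)_v\neq D$. Since $A_f$ is finitely generated and $\ast =t$ is of finite type, $(A_f)_t=(A_f)_v$, so $(A_f)_t$ is a proper $t$-ideal of finite type and is therefore contained in some maximal $t$-ideal $M$. Because $D$ is $t$-f-potent, $M$ contains a $t$-f-rigid element $r$; by the definition of $t$-f-rigid, $r$ lies in a \emph{unique} maximal $t$-ideal, which must then be $M$.

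Next I would form the ideal $J=(a_0,\dots ,a_n,r)^{t}=(A_f+rD)^{t}$. It is a $t$-ideal of finite type, and since $r\in J$, any maximal $t$-ideal containing $J$ contains $r$ and hence coincides with $M$; moreover $J\subseteq M$ because $(A_f)_t\subseteq M$ and $r\in M$. Thus $J$ is a proper finite-type $t$-ideal contained in a unique maximal $t$-ideal, i.e. $J$ is a $t$-homogeneous ideal, and it contains the $t$-f-rigid element $r$. By the defining property of a $t$-f-rigid element, every finite-type $t$-homogeneous ideal containing $r$ is principal, so $J=dD$ for some $d\in D\setminus\{0\}$.

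Finally, $a_i\in J=dD$ for every $i$, so $A_f=(a_0,\dots ,a_n)\subseteq dD$; since $f$ is primitive this forces $d$ to be a unit, whence $J=dD=D$. This contradicts $J\subseteq M\subsetneq D$, and we conclude $(A_f)_v=D$, i.e. $D$ has the PSP property. The only steps demanding care are the verification that $J$ genuinely qualifies as a $t$-homogeneous ideal in the paper's sense (a finite-type $t$-ideal sitting inside a unique maximal $t$-ideal), so that the definition of a $t$-f-rigid element applies to it — this is exactly where the hypothesis that $r$ belongs to a unique maximal $t$-ideal gets used — and the observation that "$d$ divides every coefficient $a_i$" is precisely the failure of primitivity unless $d$ is a unit.
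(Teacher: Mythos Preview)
Your argument is correct and follows essentially the same route as the paper's proof: both pick a maximal $t$-ideal $M$ containing $A_f$, use $t$-f-potency to find a $t$-f-rigid element $r\in M$, form the finite-type $t$-ideal $(A_f,r)_t$, observe it is $t$-homogeneous and hence principal by the defining property of $r$, and then derive a contradiction with primitivity. Your version is simply more explicit in verifying that $(A_f,r)_t$ is genuinely $t$-homogeneous before invoking the principality clause, whereas the paper compresses this into a single biconditional.
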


\begin{proof}
Let $f=\sum $ $a_{i}X^{i}$ be primitive i.e. $(a_{0},a_{1},...,a_{n})%
\subseteq aD$ implies $a$ is a unit and consider the finitely generated
ideal $(a_{0},a_{1},...,a_{n})$ in a $t$-f- potent domain$D.$ Then $%
(a_{0},a_{1},...,a_{n})$ is contained in a maximal $t$-ideal $M$ associated
with a $t$-f-rigid element $r$ (of course $M=M(rD))$ if and only if $%
(a_{0},a_{1},...,a_{n},r)_{t}=sD\neq D.$ Since every maximal $t$-ideal of a $%
t$-f-potent domain is associated with a $t$-f-rigid element, we conclude
that in a $t$-f-potent domain $D,$ $f=\sum $ $a_{i}X^{i}$ primitive implies
that $A_{f}$ is contained in no maximal $t$-ideal of $D;$ giving $%
(A_{f})_{v} $ $=D$ which means that each primitive polynomial $f$ in a $t$%
-f-potent domain $D$ is actually super primitive.
\end{proof}

Now PSP implies AP i.e. every atom is prime, see e.g. \cite{AZ 2007}. So, in
a $t$-f-potent domain every atom is a prime. If it so happens that a $t$%
-f-potent domain has no prime elements then the $t$-f-potent domain in
question is atomless. Recently atomless domains have been in demand. The
atomless domains are also known as antimatter domains. Most of the examples
of atomless domains that were constructed were the so-called pre-Schreier
domains, i.e. domains in which every nonzero non unit $a$ is primal (is such
that ($a|xy$ implies $a=rs$ where $r|x$ and $s|y)$. One example (Example
2.11 \cite{AZ 2007}) was laboriously constructed in \cite{AZ 2007} and this
example was atomless and not pre-Schreier, As we indicate below, it is easy
to establish a method of telling whether a $t$-f-potent domain is
pre-Schreier or not.

Cohn in \cite{Coh 1968} called an element $c$ in an integral domain $D$
primal if (in $D)$ $c|a_{1}a_{2}$ implies $c=c_{1}c_{2}$ where $c_{i}|a_{i}.$
Cohn \cite{Coh 1968} assumes that $0$ is primal. We deviate slightly from
this definition and call a nonzero element $c$ of an integral domain $D$
primal if $c|a_{1}a_{2},$ for all $a_{1},a_{2}\in D\backslash \{0\},$
implies $c=c_{1}c_{2}$ such that $c_{i}|a_{i}.$ He called an integral domain 
$D$ a Schreier domain if (a) every (nonzero) element of $D$ is primal and
(b) $D$ is integrally closed. We have included nonzero in brackets because
while he meant to include zero as a primal element, he mentioned that the
group of divisibility of a Schreier domain is a Riesz group. Now the
definition of the group of divisibility $G(D)(=\{\frac{a}{b}D:$ $a,b\in
D\backslash \{0\}\}$ ordered by reverse containment) of an integral domain $%
D $ involves fractions of only nonzero elements of $D$, so it's permissible
to restrict primal elements to be nonzero and to study domains whose nonzero
elements are all primal. This is what McAdam and Rush did in \cite{MR 1978}.
In \cite{Zaf 1987} integral domains whose nonzero elements are primal were
called pre-Schreier. It turned out that pre-Schreier domains possess all the
multiplicative properties of Schreier domains. So let's concentrate on the
terminology introduced by Cohn as if it were actually introduced for
pre-Schreier domains.

Cohn called an element $c$ of a domain $D$ completely primal if every factor
of $c$ is primal and proved, in Lemma 2.5 of \cite{Coh 1968} that the
product of two completely primal elements is completely primal and stated in
Theorem 2.6 a Nagata type result that can be rephrased as: Let $D$ be
integrally closed and let $S$ be a multiplicative set generated by
completely primal elements of $D$. If $D_{S}$ is a Schreier domain then so
is $D.$ This result was analyzed in \cite{AZ 2007} and it was decided that
the following version (\cite[Theorem 4.4]{AZ 2007} of Cohn's Nagata type
theorem works for pre-Schreier domains.

\begin{theorem}
\label{Theorem F} (Cohn's Theorem for pre-Schreier domains). Let $D$ be an
integral domain and $S$ a multiplicative set of $D$. (i) If $D$ is
pre-Schreier, then so is $D_{S}$ . (ii) (Nagata type theorem) If $D_{S}$ is
a pre-Schreier domain and $S$ is the set generated by a set of completely
primal elements of $D$, then $D$ is a pre-Schreier domain.
\end{theorem}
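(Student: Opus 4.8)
The plan is to prove the two parts separately: (i) is a short localization computation, while (ii) is the substantive Nagata‑type descent. Part (ii) is essentially \cite[Theorem 4.4]{AZ 2007} and its proof is an adaptation of Cohn's argument \cite{Coh 1968} for Schreier domains, the point being that the pre‑Schreier setting allows one to drop the integral‑closedness hypothesis that appears in Cohn's original version.

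\emph{Part (i).} Every nonzero element of $D_S$ is associate in $D_S$ to some $c\in D\setminus\{0\}$ (clear the denominator, which is a unit of $D_S$), so it suffices to show each nonzero $c\in D$ is primal in $D_S$. Suppose $c\mid\alpha_1\alpha_2$ in $D_S$ with $\alpha_1,\alpha_2\in D_S\setminus\{0\}$; again we may take $\alpha_1,\alpha_2\in D\setminus\{0\}$. Writing $\alpha_1\alpha_2=c\beta$ with $\beta\in D_S$ and clearing the denominator of $\beta$ gives $t\alpha_1\alpha_2=cd$ in $D$ for some $t\in S$ and $d\in D$, so $c\mid(t\alpha_1)\alpha_2$ in $D$ with $t\alpha_1,\alpha_2$ nonzero. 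Since $D$ is pre‑Schreier, $c=c_1c_2$ with $c_1\mid t\alpha_1$ and $c_2\mid\alpha_2$ in $D$. As $t$ is a unit of $D_S$, the same factorization yields $c_1\mid\alpha_1$ and $c_2\mid\alpha_2$ in $D_S$, so $c$ is primal in $D_S$.

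\emph{Part (ii).} We must show every nonzero $d\in D$ is primal; units are trivially primal, so assume $d$ is a non‑unit, and suppose $d\mid ef$ in $D$ with $e,f\in D\setminus\{0\}$. Passing to $D_S$, which is pre‑Schreier, $d$ is primal there, so $d=d_1d_2$ in $D_S$ with $d_1\mid e$ and $d_2\mid f$ in $D_S$. Clearing denominators, there is $s\in S$ with $u:=sd_1,\ v:=sd_2\in D$, so $uv=s^2d$ in $D$, and, enlarging $s$ if necessary, $u\mid ew$ and $v\mid fw'$ in $D$ for suitable $w,w'\in S$. Now the hypothesis enters: each element of $S$ is, up to a unit, a product of the prescribed completely primal elements, hence completely primal by Cohn's Lemma \cite[Lemma 2.5]{Coh 1968}, and by definition every $D$‑factor of a completely primal element is again completely primal. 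The useful consequence is that if $\sigma$ is completely primal and $\sigma\mid xy$ in $D$, then $\sigma=\sigma'\sigma''$ with $\sigma'\mid x$, $\sigma''\mid y$, \emph{and} $\sigma',\sigma''$ again completely primal; applying this repeatedly to the $S$‑factors $s^2, w, w'$ appearing above lets one peel off the $S$‑denominators and route each piece to the correct side, converting the $D_S$‑factorization $d=d_1d_2$ into a genuine factorization $d=\delta_1\delta_2$ in $D$ with $\delta_1\mid e$ and $\delta_2\mid f$ in $D$. Hence $d$ is primal, and $D$ is pre‑Schreier.

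The main obstacle is exactly this last transfer step in (ii): the factorization produced in $D_S$ carries denominators coming from $S$, and a priori such a denominator can obstruct divisibility in $D$ — for example $\delta_1\mid s e$ in $D$ does not by itself give $\delta_1\mid e$ in $D$. Complete primality of the generators of $S$ (which can also be read off from the Riesz‑decomposition behaviour of the intervals $[0,[s]]$ in the group of divisibility) is precisely the device that allows these denominators to be absorbed without loss, and it is what substitutes for integral closedness in Cohn's theorem; getting the bookkeeping of this descent exactly right is the only real work in the proof.
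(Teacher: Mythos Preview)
The paper does not actually supply a proof of Theorem~\ref{Theorem F}; it simply records the statement as \cite[Theorem 4.4]{AZ 2007} and moves on. So there is no in-paper argument to compare against. Your proposal is consistent with this: you cite the same source, and your sketch of (i) and (ii) follows the standard line of the Cohn/Anderson--Zafrullah argument.

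That said, your part (ii) is explicitly a sketch, and you are candid about this (``getting the bookkeeping of this descent exactly right is the only real work''). The passage where you write ``applying this repeatedly to the $S$-factors $s^2,w,w'$ \ldots\ lets one peel off the $S$-denominators and route each piece to the correct side'' is the place a referee would ask you to expand: from $uv=s^2d$ with $s^2$ completely primal one gets $s^2=\sigma_1\sigma_2$ with $\sigma_i\mid u$ or $\sigma_i\mid v$, but one must check that the resulting quotients $u/\sigma_1$, $v/\sigma_2$ still divide $e$ and $f$ respectively in $D$ (not merely in $D_S$), and this uses primality of the $\sigma_i$ a second time against the cofactors coming from $u\mid ew$, $v\mid fw'$. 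None of this is wrong in your outline, but as written it is an appeal to a known argument rather than a self-contained proof---which, to be fair, is exactly what the paper itself does.
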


Now we have already established above that if $r$ is a $t$-f-rigid element
then $(r,x)_{v}$ is principal for each $x\in D\backslash \{0\}.$ But then $%
(r,x)_{v}$ is principal for each $x\in D\backslash \{0\}$ if and only if $%
(r)\cap (x)$ is principal for each $x\in D\backslash \{0\}.$ But then $r$ is
what was called in \cite{AZ 1995} an extractor. Indeed it was shown in \cite%
{AZ 1995} that an extractor is completely primal. Thus we have the following
statement.

\begin{corollary}
\label{Corollary G} Let $D$ be a $t$-f-potent domain. Then $D$ is
pre-Schreier if and only if $D_{S}$ is pre-Schreier for some multiplicative
set $S$ that is the saturation of a set generated by some $t$-f rigid
elements.
\end{corollary}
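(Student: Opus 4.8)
The plan is to derive both implications directly from Theorem \ref{Theorem F}, exploiting the fact recorded just before the corollary that a $t$-f-rigid element is an extractor in the sense of \cite{AZ 1995} and is therefore completely primal.

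For the forward implication only part (i) of Theorem \ref{Theorem F} is needed: if $D$ is pre-Schreier, then $D_S$ is pre-Schreier for \emph{every} multiplicative set $S$ of $D$, in particular for any $S$ obtained as the saturation of a multiplicative set generated by $t$-f-rigid elements. (Such an $S$ exists because $D$ is $t$-f-potent: each maximal $t$-ideal contains a $t$-f-rigid element, and one may take, for example, the saturation of the multiplicative set generated by one such element.)

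The content of the statement lies in the converse. Assume that $S$ is the saturation of the multiplicative set generated by a set $\Sigma$ of $t$-f-rigid elements of $D$ and that $D_S$ is pre-Schreier. By the discussion preceding the corollary, for every $r\in\Sigma$ and every $x\in D\backslash\{0\}$ the ideal $(r,x)_v$ is principal, equivalently $(r)\cap(x)$ is principal; hence $r$ is an extractor, so $r$ is completely primal by \cite{AZ 1995}. I would then verify that \emph{every} element of $S$ is completely primal: an element of the multiplicative set generated by $\Sigma$ is, up to a unit, a finite product of members of $\Sigma$, hence completely primal by Cohn's Lemma 2.5 of \cite{Coh 1968}; and any element $d$ of the saturation $S$ divides some such product $p$, so every factor of $d$ is a factor of $p$ and is therefore primal (as $p$ is completely primal), which makes $d$ completely primal. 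Consequently $S$ is a multiplicative set generated by a set of completely primal elements of $D$ --- say by $S$ itself --- and part (ii) of Theorem \ref{Theorem F}, the Nagata-type theorem, applies and yields that $D$ is pre-Schreier.

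The step requiring the most care is the bookkeeping with completely primal elements in the converse: one needs the class of completely primal elements to be closed under finite products (this is exactly Cohn's Lemma 2.5 of \cite{Coh 1968}) and under passing to factors (immediate, since a factor of a factor of $c$ is again a factor of $c$), so that the saturated multiplicative set $S$ is legitimately of the form required by part (ii) of Theorem \ref{Theorem F}. Granting these two closure properties, the corollary is a straightforward combination of the two halves of Theorem \ref{Theorem F}, and no further appeal to $t$-f-potency is required in the converse direction beyond ensuring that the hypothesis on $S$ is not vacuous.
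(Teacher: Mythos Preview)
Your proof is correct and follows essentially the same route as the paper: both directions rest on Theorem \ref{Theorem F}, with the converse using the fact (established just before the corollary) that $t$-f-rigid elements are extractors and hence completely primal. Your version is simply more explicit than the paper's one-line proof in verifying that the \emph{saturation} $S$ still consists of completely primal elements (via closure under products and factors), which is exactly what is needed to invoke part (ii) of Theorem \ref{Theorem F}.
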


(Proof. If $D$ is pre-Schreier then $D_{S}$ is pre-Schreier anyway. If on
the other hand $D_{S}$ is pre-Schreier and $S$ is (the saturation of a set)
multiplicatively generated by some $t$-f- rigid elements. Then by Theorem %
\ref{Theorem F}, $D$ is pre-Schreier.)

One may note here that if $D_{S}$ is not pre-Schreier for any multiplicative
set $S,$ then $D$ is not pre-Schreier. So the decision making result of Cohn
comes in demand only if $D_{S}$ is pre-Schreier. Of course in the Corollary %
\ref{Corollary G} situation, the saturation $S$ of the multiplicative set
generated by all the $t$-f-rigid elements of $D,$ leading to: if $D_{S}$ is
not pre-Schreier then $D$ is not pre-Schreier for sure and if $D_{S}$ is
pre-Schreier then $D$ cannot escape being a pre- Schreier domain.

\begin{example}
\label{Example H} Let $D=\cap _{i=1}^{i=n}V_{i}$ be a finite intersection of
distinct non-discrete rank one valuation domains with quotient field $%
K=qf(D),$ $X$ an indeterminate over $D$ and let $L$ be a proper field
extension of $K.$ Then (a) $D+XL[X]$ is a non-pre-Schreier, $t$-f-potent
domain and (b) $D+XL[X]_{(X)}$ is an atomless non-pre-Schreier, $t$-f-potent
domain.
\end{example}

Illustration: (a) It is well known that $D$ is a Bezout domain with exactly $%
n$ maximal ideals, $M_{i}$ \cite{Kap 1970}, with $V_{i}=D_{M_{i}}.$ Thus $D$%
= $\cap D_{M_{i}}$ and each of $M_{i}$ being a $t$-ideal must, each,
contains a $t$-homogeneous ideal by Proposition \ref{Proposition X1}. $%
D+XL[X]$ is $t$-f-potent by Corollary \ref{Corollary B4}.

One more result that can be added needs introduction to a neat construction
called the Nagata ring construction these days. This is how the construction
goes.

Let $\ast $ be a star operation on a domain $D$, let $X$ be an indeterminate
over $D$ and Let $S_{\ast }=\{f\in D[X]|$ $(A_{f})^{\ast }=D\}.$ Then the
ring $D[X]_{S_{\ast }}$ is called the Nagata construction from $D$ with
reference to $\ast $ and is denoted by $Na(D,\ast ).$ Indeed $Na(D,\ast
)=Na(D,\ast _{f})$

\bigskip

\begin{proposition}
\label{Proposition G1} (\cite{Kan 1989} Proposition 2.1.) Let $\ast $ be a
star operation on $R$. Let $\ast _{f}$ be the finite type star operation
induced by $\ast .$ Let $S_{\ast }=\{f\in D[X]|(A_{f})_{\ast }=D\}$. Then
(1) $S_{\ast }=D[X]\backslash \cup _{M\in \Gamma }M[X]$ where $\Gamma $ is
the set of all maximal $\ast _{f}$-ideals of $D$. (Hence $S_{\ast }$ is a
saturated multiplicatively closed subset of $D[X].$), (2) $\{M[X]_{S_{\ast
}}\}$ is the set of all maximal ideals of $[DX]_{S_{\ast }}.$
\end{proposition}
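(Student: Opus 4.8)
The plan is to reduce everything to the finite type operation $\ast_f$, identify $S_\ast$ as the complement in $D[X]$ of the union of the primes $M[X]$ over the maximal $\ast_f$-ideals $M$, and then read off both assertions by elementary localization theory; the only nontrivial ingredient will be a ``content'' lemma saying that an ideal of $D[X]$ disjoint from $S_\ast$ must sit inside a single $M[X]$. (One may harmlessly assume $D$ is not a field.) First I would note that $S_\ast=S_{\ast_f}$: for the finitely generated ideal $A_f$ one has $(A_f)^{\ast}=(A_f)^{\ast_f}$, because $A_f\subseteq A_f\in f(D)$ forces $(A_f)^{\ast_f}\supseteq(A_f)^{\ast}$ while $\ast_f\le\ast$ gives the reverse containment; so I may assume $\ast$ itself has finite character. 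Then $(A_f)^{\ast}\ne D$ precisely when $(A_f)^{\ast}$ is a proper finite type $\ast_f$-ideal, hence is contained in some maximal $\ast_f$-ideal $M$; conversely $A_f\subseteq M$ forces $(A_f)^{\ast}\subseteq M\ne D$. Since $f\in M[X]\iff A_f\subseteq M$, this yields $S_\ast=D[X]\setminus\bigcup_{M\in\Gamma}M[X]$. Each $M[X]$ is prime, as $D[X]/M[X]\cong(D/M)[X]$ is a domain ($M$ being prime, as a maximal $\ast_f$-ideal), so $S_\ast$, being the complement of a union of prime ideals, is automatically saturated and multiplicatively closed; this gives (1).

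For (2) I would first establish the lemma: if $I$ is an ideal of $D[X]$ with $I\cap S_\ast=\emptyset$, then $I\subseteq M[X]$ for some $M\in\Gamma$. Set $\mathfrak{c}=\sum_{f\in I}A_f$ and suppose $\mathfrak{c}^{\ast_f}=D$; by finite character there are $f_1,\dots,f_n\in I$ with $(A_{f_1}+\cdots+A_{f_n})^{\ast}=D$. Choosing exponents $0=e_1<e_2<\cdots<e_n$ whose consecutive gaps exceed every $\deg f_i$, the polynomial $g=\sum_{i=1}^n X^{e_i}f_i$ has pairwise disjoint coefficient supports, so $A_g=A_{f_1}+\cdots+A_{f_n}$ and $(A_g)^{\ast}=D$; thus $g\in S_\ast$, while $g\in I$ since $I$ is an ideal of $D[X]$ --- contradicting $I\cap S_\ast=\emptyset$. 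Hence $\mathfrak{c}^{\ast_f}$ is proper, so it lies in some maximal $\ast_f$-ideal $M$, and then $A_f\subseteq\mathfrak{c}\subseteq M$ for all $f\in I$, i.e.\ $I\subseteq M[X]$.

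Now I would assemble (2). For $M\in\Gamma$ we have $M[X]\cap S_\ast=\emptyset$ (if $A_f\subseteq M$ then $(A_f)^{\ast}\subseteq M\ne D$), so $M[X]_{S_\ast}$ is a proper prime ideal of $D[X]_{S_\ast}$ whose contraction to $D[X]$ is $M[X]$. Any maximal ideal of $D[X]_{S_\ast}$ has the form $P_{S_\ast}$ for its contraction $P$, a prime of $D[X]$ disjoint from $S_\ast$; by the lemma $P\subseteq M[X]$ for some $M\in\Gamma$, so that maximal ideal is contained in --- hence equals --- $M[X]_{S_\ast}$. Moreover no proper inclusion $M[X]_{S_\ast}\subseteq M'[X]_{S_\ast}$ can hold, since contracting would give $M=M[X]\cap D\subseteq M'[X]\cap D=M'$ with $M,M'$ maximal $\ast_f$-ideals, forcing $M=M'$. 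Therefore $\{M[X]_{S_\ast}:M\in\Gamma\}$ is exactly the set of maximal ideals of $D[X]_{S_\ast}$.

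The step I expect to be the main obstacle is the content lemma --- precisely the reduction, via finite character, from the possibly enormous generating family $\mathfrak{c}$ to finitely many $f_i\in I$, together with the splicing of those $f_i$ into a single element of $I\cap S_\ast$; this is exactly the kind of place where a hasty ``pick finitely many generators'' argument goes astray, and it is worth writing out carefully. Everything else is routine localization bookkeeping together with the standard fact, available from the finite character of $\ast_f$, that a proper finite type $\ast_f$-ideal is contained in a maximal $\ast_f$-ideal.
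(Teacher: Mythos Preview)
Your argument is correct. Note, however, that the paper does not supply its own proof of this proposition: it is quoted verbatim from \cite{Kan 1989} as background for the Nagata ring construction, and the paper's only intervention near this point is a patch to the proof of the \emph{next} proposition (Proposition~\ref{Proposition G2}). So there is no ``paper's own proof'' to compare against here; your proof is essentially Kang's original argument --- the identification $S_\ast=S_{\ast_f}$, the containment criterion via maximal $\ast_f$-ideals, and the splicing trick $g=\sum_i X^{e_i}f_i$ to manufacture an element of $I\cap S_\ast$ from finitely many $f_i$ --- and it is carried out cleanly.
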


\bigskip

As pointed out in \cite{FHP 2019}, proof of Part (1) of the following
proposition has a minor flaw, in that for a general domain it uses a result (%
\cite[38.4]{Gil 1972}) that is stated for integrally closed domains. The fix
offered in \cite{FHP 2019} is a new result and steeped in semistar
operations. We offer, in the following, a simple change in the proof of \cite%
[(1) Proposition 2.2.]{Kan 1989} to correct the flaw indicated above.

\bigskip

\begin{proposition}
\label{Proposition G2} (\cite{Kan 1989} Proposition 2.2.) Let $T$ be a
multiplicatively closed subset of $D[X]$ contained in $S_{v}=\{f\in
D[X]|(A_{f})_{v}=D\}$. Let $I$ be a nonzero fractional ideal of $D$. Then
(1) $(I[X]_{T})^{-1}=I^{-1}[X]_{T}$, (2) $(I[X]_{T})_{v}=I_{v}[X]_{T}$ and
(3) $(I[X]_{T})_{t}$ $=I_{t}[X]_{T}.$
\end{proposition}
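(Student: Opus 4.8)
The plan is to keep the structure of Kang's argument and to replace its one appeal to \cite[38.4]{Gil 1972} (which requires $D$ integrally closed) by the Dedekind-Mertens content formula, which holds over an arbitrary commutative ring. The only property of members of $T$ that is actually used is the following fact: if $f\in D[X]$ has $(A_f)_v=D$, if $g\in K[X]$, and if $fg\in D[X]$, then $g\in D[X]$. Indeed, after clearing a denominator of $g$ lying in $D$, the Dedekind-Mertens lemma gives $A_f^{m+1}A_g=A_f^{m}A_{fg}$ with $m=\deg g$; since $fg\in D[X]$ forces $A_{fg}\subseteq D$, we get $A_f^{m+1}A_g\subseteq A_f^{m}\subseteq D$, hence $A_g\subseteq(A_f^{m+1})^{-1}$, and from $(A_f^{m+1})_v=\big((A_f)_v^{m+1}\big)_v=D$ we conclude $(A_f^{m+1})^{-1}=D$, so $A_g\subseteq D$, i.e. $g\in D[X]$. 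The hypothesis $T\subseteq S_v$ is exactly what makes this applicable to every $f\in T$.

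Granting this, here is how I would prove (1). The inclusion $I^{-1}[X]_T\subseteq(I[X]_T)^{-1}$ is immediate: $I^{-1}[X]\cdot I[X]=(I^{-1}I)[X]\subseteq D[X]$, and localizing at $T$ gives $I^{-1}[X]_T\cdot I[X]_T\subseteq D[X]_T$. For the reverse, let $h\in(I[X]_T)^{-1}$; then $ah\in D[X]_T$ for every constant $a\in I\setminus\{0\}$. Fix $a_0\in I\setminus\{0\}$ and write $a_0h=w/f$ with $w\in D[X]$ and $f\in T$; then $fh=w/a_0\in K[X]$. For an arbitrary $a\in I\setminus\{0\}$, write $ah=u_a/f_a$ with $u_a\in D[X]$ and $f_a\in T$; then $f_a\cdot(afh)=f u_a\in D[X]$, and since $afh\in K[X]$ and $f_a\in S_v$, the fact above gives $afh\in D[X]$. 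Hence $ac\in D$ for every coefficient $c$ of $fh$ and every $a\in I$, so every coefficient of $fh$ lies in $I^{-1}=\bigcap_{a\in I\setminus\{0\}}a^{-1}D$; thus $fh\in I^{-1}[X]$ and $h=(fh)/f\in I^{-1}[X]_T$.

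Part (2) is then formal: $(I[X]_T)_v=\big((I[X]_T)^{-1}\big)^{-1}=\big(I^{-1}[X]_T\big)^{-1}=(I^{-1})^{-1}[X]_T=I_v[X]_T$, using (1) once for $I$ and once for the fractional ideal $I^{-1}$. For (3), recall $A_t=\bigcup\{B_v : B\subseteq A,\ B\text{ finitely generated}\}$. On the one hand, over finitely generated ideals $J\subseteq I$ of $D$ we have $I_t[X]_T=\bigcup_J J_v[X]_T=\bigcup_J(J[X]_T)_v\subseteq(I[X]_T)_t$, since each $J[X]_T$ is a finitely generated fractional ideal of $D[X]_T$ contained in $I[X]_T$. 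On the other hand, if $H$ is any finitely generated fractional ideal of $D[X]_T$ with $H\subseteq I[X]_T$, then clearing denominators from $T$ writes $H=PD[X]_T$ for a finitely generated ideal $P$ of $D[X]$ with $P\subseteq I[X]$; letting $J\subseteq I$ be the finitely generated ideal of $D$ generated by the coefficients of a finite generating set of $P$, we get $H\subseteq J[X]_T$, hence $H_v\subseteq(J[X]_T)_v=J_v[X]_T\subseteq I_t[X]_T$. Taking the union over all such $H$ gives $(I[X]_T)_t\subseteq I_t[X]_T$.

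The only real obstacle is pinpointing the correct substitute for the content-formula step; once the divisibility fact above is in hand as a consequence of Dedekind-Mertens (with no integral closedness needed), everything else is routine bookkeeping with localizations together with the standard reductions of the $v$- and $t$-operations to principal, respectively finitely generated, subideals. I would also make explicit the harmless conventions that $I[X]$ and $I^{-1}[X]$ denote the $D[X]$-modules of polynomials with coefficients in $I$ and in $I^{-1}$, that the quotient field of $D[X]$ is $K(X)$, and that $I^{-1}=\bigcap_{a\in I\setminus\{0\}}a^{-1}D$.
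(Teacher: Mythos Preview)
Your proof is correct and follows the same overall strategy as the paper: keep Kang's argument intact and replace only the problematic appeal to \cite[38.4]{Gil 1972} by a content identity that does not require integral closedness. The difference lies in the substitute chosen for that single step. The paper invokes \cite[Proposition 2.2]{MNZ 1990}, which gives $(A_{bfg})_v=(A_{bf}A_g)_v=(A_{bf})_v$ directly from the $v$-invertibility of $A_g$ (since $(A_g)_v=D$), and then reads off $bA_f\subseteq(A_{bf})_v\subseteq D$. You instead use the Dedekind--Mertens formula $A_f^{m+1}A_g=A_f^{m}A_{fg}$ together with $(A_f^{m+1})_v=D$ to force $A_g\subseteq D$. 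Both routes are sound; the paper's citation is a one-line replacement tailored to $v$-invertible contents, while your Dedekind--Mertens argument is more elementary and self-contained, at the cost of a short extra computation with powers of $A_f$. Your explicit treatment of parts (2) and (3) is standard and matches what the paper leaves implicit.
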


(1) It is clear that $I^{-1}[X]_{T}\subseteq (I[X]_{T})^{-1}$. Let $u\in
(I[X]_{T})^{-1}.$ Since for any $a\in I\backslash \{0\}$ we have $%
(I[X]_{T})^{-1}\subseteq a^{-1}D[X]_{T}\subseteq K[X]_{T}$ we may assume
that $u=f/h$ with $f\in K[X]$ and $h\in T$. Then $f\in (I[X]_{T})^{-1}$.
Hence $fI[X]_{T}$ $\subseteq D[X]_{T}$ $.$ Hence $bf\in D[X]_{T}$ for any $%
b\in I$. Now $bfg\in $ $D[X]$ for some $g\in S_{v}$. So $(A_{bfg})_{v}%
\subseteq D$. By \cite[Proposition 2.2.]{MNZ 1990}, $%
(A_{bfg})_{v}=(A_{bf}A_{g})_{v}=(A_{bf})_{v}$, since $(A_{g})_{v}=D$ and
hence $v$-invertible. Therefore $bA_{f}\subseteq
(bA_{f})_{v}=(A_{bf})_{v}\subseteq D$ for any $b\in I$ Hence$A_{f}\subseteq
I^{-1}$. Hence $f\in I^{-1}[X]$ and $f/h\in I^{-1}[X]_{T}.$ Therefore $%
(I[X]_{T})^{-1}=I^{-1}[X]_{T}.$

\begin{theorem}
\label{Theorem G3} (\cite{Kan 1989}, Theorem 2.4.) Let $\ast $ be a finite
type star operation on $D$. Let $I$ be a$F[X]_{S_{v}}$ nonzero ideal of $D$.
Then $I$ is $\ast $-invertible if and only if $I[X]_{S_{\ast }}$ is
invertible.
\end{theorem}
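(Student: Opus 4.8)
The plan is to reduce everything to Proposition~\ref{Proposition G2}(1) — the formula $(I[X]_{T})^{-1}=I^{-1}[X]_{T}$ — together with the description of the maximal ideals of $Na(D,\ast )=D[X]_{S_{\ast }}$ furnished by Proposition~\ref{Proposition G1}. The preliminary point to check is that $S_{\ast }\subseteq S_{v}$: since $\ast $ has finite type we have $\ast \leq t\leq v$, so $(A_{f})^{\ast }=D$ forces $(A_{f})_{v}=D$. Hence Proposition~\ref{Proposition G2} is available with $T=S_{\ast }$, giving $(I[X]_{S_{\ast }})^{-1}=I^{-1}[X]_{S_{\ast }}$ for every nonzero (fractional) ideal $I$ of $D$, and also $(I[X]_{S_{\ast }})(I^{-1}[X]_{S_{\ast }})=(II^{-1})[X]_{S_{\ast }}$ since extension of ideals to $Na(D,\ast )$ is multiplicative.

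First I would prove the forward implication. Assume $I$ is $\ast $-invertible, i.e. $(II^{-1})^{\ast }=D$ (recall $II^{-1}\subseteq D$). Because $\ast =\ast _{s}$, from $1\in (II^{-1})^{\ast }=\bigcup \{C^{\ast }\mid C\subseteq II^{-1},\ C\in f(D)\}$ and the fact that each $C^{\ast }$ is a $D$-submodule, we obtain a finitely generated ideal $C=(c_{1},\dots ,c_{k})\subseteq II^{-1}$ with $C^{\ast }=D$. Then the polynomial $g=c_{1}+c_{2}X+\cdots +c_{k}X^{k-1}\in D[X]$ has content $A_{g}=C$, so $(A_{g})^{\ast }=D$ and $g\in S_{\ast }$; thus $g$ is a unit of $Na(D,\ast )$ lying in $(II^{-1})[X]$. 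Consequently $(II^{-1})[X]_{S_{\ast }}=Na(D,\ast )$, and by the identities above this reads $(I[X]_{S_{\ast }})(I[X]_{S_{\ast }})^{-1}=Na(D,\ast )$, i.e. $I[X]_{S_{\ast }}$ is invertible.

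For the converse, suppose $I[X]_{S_{\ast }}$ is invertible in $Na(D,\ast )$, so $(I[X]_{S_{\ast }})(I[X]_{S_{\ast }})^{-1}=Na(D,\ast )$; applying Proposition~\ref{Proposition G2}(1) again this becomes $(II^{-1})[X]_{S_{\ast }}=Na(D,\ast )$. I claim $(II^{-1})^{\ast }=D$. If not, then $(II^{-1})^{\ast }$ is a proper $\ast $-ideal, hence $II^{-1}\subseteq M$ for some maximal $\ast $-ideal $M$, and then $(II^{-1})[X]_{S_{\ast }}\subseteq M[X]_{S_{\ast }}$, which by Proposition~\ref{Proposition G1}(2) is a proper (indeed maximal) ideal of $Na(D,\ast )$, contradicting the previous equality. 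Hence $(II^{-1})^{\ast }=D$, so $I$ is $\ast $-invertible.

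The genuinely load-bearing step is the inclusion $S_{\ast }\subseteq S_{v}$, which is exactly what makes Proposition~\ref{Proposition G2} applicable here; granting that, the rest is the bookkeeping above plus the elementary ``content polynomial'' device. If one also wants the sharper conclusion that $I[X]_{S_{\ast }}$ is in fact \emph{principal}, one localizes at each maximal ideal $M[X]_{S_{\ast }}$ of $Na(D,\ast )$ — which by Proposition~\ref{Proposition G1} is $D_{M}(X)$ — uses that $ID_{M}$ is principal (a local consequence of $\ast $-invertibility), and notes that a finitely generated ideal of the local domain $D_{M}$ is principal if and only if its extension to $D_{M}(X)$ is, by Nakayama applied over the residue-field extension $D_{M}/MD_{M}\hookrightarrow D_{M}(X)/MD_{M}(X)$; but this refinement is not needed for the equivalence stated.
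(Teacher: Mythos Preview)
The paper does not give its own proof of this statement; it simply quotes it from \cite{Kan 1989} as Theorem~2.4 and uses it as a black box (together with \cite[Proposition~2.14]{Kan 1989}, recorded here as Theorem~\ref{Theorem G4}) in the proof of Corollary~\ref{Corollary G5}. So there is nothing in the paper to compare your argument against.

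That said, your proof is correct and is essentially Kang's original argument: the observation $S_{\ast }\subseteq S_{v}$ makes Proposition~\ref{Proposition G2}(1) available with $T=S_{\ast }$, the ``content polynomial'' trick produces a unit of $Na(D,\ast )$ inside $(II^{-1})[X]$ in the forward direction, and the description of $\max(Na(D,\ast ))$ from Proposition~\ref{Proposition G1}(2) handles the converse. Your closing remark about principality is exactly the content of Theorem~\ref{Theorem G4}, which the paper also cites rather than proves.
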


\begin{theorem}
\label{Theorem G4}(\cite{Kan 1989}, Proposition 2.14.) Let $\ast $ be a star
operation on $D$. Then any invertible ideal of $D[X]_{S_{\ast }}$ is
principal.
\end{theorem}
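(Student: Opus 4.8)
The plan is to exploit the description of the maximal ideals of the Nagata ring. Since $S_{\ast }=S_{\ast _{f}}$ and $D[X]_{S_{\ast }}=D[X]_{S_{\ast _{f}}}$, I would first replace $\ast $ by $\ast _{f}$ and assume throughout that $\ast $ is of finite type; write $R=D[X]_{S_{\ast }}$. By Proposition \ref{Proposition G1} the maximal ideals of $R$ are exactly the ideals $M[X]_{S_{\ast }}$ with $M$ a maximal $\ast $-ideal of $D$, and for an ideal $E$ of $D[X]$ one has $ER=R$ if and only if the content ideal $A_{E}:=\sum_{h\in E}A_{h}$ lies in no maximal $\ast $-ideal of $D$, i.e. $(A_{E})^{\ast }=D$ (a proper $\ast $-ideal sits inside a maximal $\ast $-ideal).

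Now let $J$ be an invertible ideal of $R$. Then $J$ and $J^{-1}$ are finitely generated, and clearing denominators (which lie in $S_{\ast }$ and hence do not change the ideals generated in $R$) I may write $J=(f_{1},\dots ,f_{n})R$ and $J^{-1}=(g_{1},\dots ,g_{m})R$ with all $f_{i},g_{j}\in D[X]$. Pick integers $0=d_{1}<d_{2}<\cdots <d_{n}$ large enough that the supports of $X^{d_{1}}f_{1},\dots ,X^{d_{n}}f_{n}$ are pairwise disjoint and set $f=\sum_{i}X^{d_{i}}f_{i}$. Then $f\in (f_{1},\dots ,f_{n})D[X]\subseteq J$, the coefficient set of $f$ is the union of those of the $f_{i}$, so $A_{f}=\sum_{i}A_{f_{i}}=:I$ is a finitely generated ideal of $D$, and since each $f_{i}$ has coefficients in $I$ we get $J\subseteq I[X]R=IR$. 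Put also $I^{\prime }=\sum_{j}A_{g_{j}}$. The first key step is to show $I$ is $\ast $-invertible: from $JJ^{-1}=R$ we have $(f_{i}g_{j})_{i,j}R=R$, hence $\sum_{i,j}A_{f_{i}g_{j}}$ lies in no maximal $\ast $-ideal; but $A_{f_{i}g_{j}}\subseteq A_{f_{i}}A_{g_{j}}$, so $\sum_{i,j}A_{f_{i}g_{j}}\subseteq (\sum_{i}A_{f_{i}})(\sum_{j}A_{g_{j}})=II^{\prime }$, whence $II^{\prime }$ lies in no maximal $\ast $-ideal and therefore $(II^{\prime })^{\ast }=D$ with $I,I^{\prime }$ finitely generated. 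Thus $I$ is $\ast $-invertible.

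The second key step is to prove $fR=IR$; combined with the sandwich $fR\subseteq J\subseteq IR$ this yields $J=fR$, principal, and finishes the proof. The inclusion $fR\subseteq IR$ is clear; for the reverse, since $IR/fR$ is a finitely generated $R$-module it suffices to check the equality after localizing at each maximal ideal $\mathfrak{m}=M[X]_{S_{\ast }}$, noting $R_{\mathfrak{m}}=D[X]_{M[X]}$. As $I$ is $\ast $-invertible it is locally principal at each maximal $\ast $-ideal, so $ID_{M}=cD_{M}$ for some $c\in D$. Then $f/c\in D_{M}[X]$ has content generating $D_{M}$, i.e. $f/c$ is primitive over the local ring $D_{M}$; reducing its coefficients modulo $MD_{M}$ shows its image in the residue field $D[X]_{M[X]}/M[X]D[X]_{M[X]}$ is nonzero, so $f/c$ is a unit of $D[X]_{M[X]}$. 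Hence $fD[X]_{M[X]}=cD[X]_{M[X]}=ID[X]_{M[X]}$ for every maximal $\ast $-ideal $M$, giving $fR=IR$ and so $J=fR$. (Alternatively, once $I$ is known to be $\ast $-invertible one may invoke Theorem \ref{Theorem G3} together with the content formula of \cite{MNZ 1990} in place of the local computation; this is essentially Kang's original route \cite{Kan 1989}.)

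The step I expect to be the main obstacle is this identification $fR=IR$: one must be careful that localizing $R$ at $M[X]_{S_{\ast }}$ really returns $D[X]_{M[X]}$, that a polynomial primitive over the local ring $D_{M}$ is genuinely a \emph{unit} there (not just a non-zero-divisor), and that the reduction from an arbitrary star operation to its finite-type companion $\ast _{f}$ is harmless at every point where maximal $\ast $-ideals are used — each routine, but easy to mishandle.
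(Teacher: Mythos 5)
The paper itself gives no proof of this theorem (it is simply quoted from \cite{Kan 1989}), so your argument must stand on its own, and it has a genuine gap at its central step. First, you cannot write $J^{-1}=(g_{1},\dots ,g_{m})R$ with $g_{j}\in D[X]$: the denominators occurring in $J^{-1}$ need not lie in $S_{\ast }$ (indeed, if both $J$ and $J^{-1}$ were integral ideals with $JJ^{-1}=R$, then $R=JJ^{-1}\subseteq J\cap J^{-1}$ would force $J=R$). In general one only gets $J^{-1}=\frac{1}{q}(g_{1},\dots ,g_{m})R$ with $q\in D[X]$ a nonunit of $R$, and then your content computation yields only $(II^{\prime })^{\ast }\supseteq (A_{q})^{\ast }$ rather than $(II^{\prime })^{\ast }=D$. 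Second, and decisively, the two facts you extract from this --- that $I=A_{f}$ is $\ast $-invertible and that $fR=IR$ --- are both false in general. Take $D=k[[t^{2},t^{3}]]$ with maximal ideal $M=(t^{2},t^{3})$, which is the unique maximal $\ast $-ideal for every $\ast $ (as $M=M_{v}$), so that $R=D[X]_{S_{\ast }}=D[X]_{M[X]}$, and let $J=fR$ with $f=t^{2}+t^{3}X$. Then $J$ is invertible (it is principal), $I=A_{f}=M$, and $M$ is not $\ast $-invertible for any $\ast $ since $(MM^{-1})_{v}=M\neq D$. Moreover $t^{2}\in IR\setminus fR$: an equation $t^{2}s=fh$ with $h\in D[X]$, $s\in D[X]\setminus M[X]$ gives $s=(1+tX)h$ in $k[[t]][X]$, and comparing coefficients forces every coefficient of $h$, hence of $s$, into $M$, a contradiction. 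So $fR\subsetneq IR$ and the sandwich $fR\subseteq J\subseteq IR$ cannot close.

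What is salvageable is your choice of $f$ with $A_{f}=\sum_{i}A_{f_{i}}$ and the plan of checking $J=fR$ locally at the maximal ideals $M[X]_{S_{\ast }}$, where $R_{M[X]R}=D[X]_{M[X]}$; but the identity to prove locally is $JD[X]_{M[X]}=fD[X]_{M[X]}$ directly, not $fD[X]_{M[X]}=ID[X]_{M[X]}$. Since $D[X]_{M[X]}$ is local, $JD[X]_{M[X]}$ is principal and equals $f_{i_{0}}D[X]_{M[X]}$ for some $i_{0}$; write $f=f_{i_{0}}h$ with $h\in D[X]_{M[X]}$. If $h$ were a nonunit, then $fs=f_{i_{0}}m$ for some $s\in D[X]\setminus M[X]$ and $m\in M[X]$, so $A_{fs}D_{M}\subseteq A_{f_{i_{0}}}MD_{M}\subseteq A_{f}MD_{M}$, while Dedekind--Mertens gives $A_{f}^{l+1}A_{s}=A_{f}^{l}A_{fs}$ (with $l=\deg s$) and hence, as $A_{s}D_{M}=D_{M}$, $A_{f}^{l+1}D_{M}=A_{f}^{l}A_{fs}D_{M}\subseteq A_{f}^{l+1}MD_{M}$; Nakayama then forces $A_{f}^{l+1}D_{M}=0$, a contradiction. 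Thus $h$ is a unit, $fD[X]_{M[X]}=JD[X]_{M[X]}$ for every $M$, and $J=fR$. This version requires no invertibility of $A_{f}$ (which, as the example shows, can genuinely fail) and is essentially Kang's argument.
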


Thus we have the following corollary.

\begin{corollary}
\label{Corollary G5} Let $I$ be a $t$-ideal of finite type of $D$. Then $I$
is $t$-invertible if and only if $I[X]_{S_{v}}$ is principal.
\end{corollary}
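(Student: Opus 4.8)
The plan is to obtain Corollary \ref{Corollary G5} as an immediate consequence of Theorems \ref{Theorem G3} and \ref{Theorem G4}, once the two multiplicative sets in play are identified. The key preliminary observation is that $S_{v}=S_{t}$: since $A_{f}$ is finitely generated, $(A_{f})_{t}=(A_{f})_{v}$, so $\{f\in D[X]\mid (A_{f})_{v}=D\}=\{f\in D[X]\mid (A_{f})_{t}=D\}$; equivalently, as already noted above, $Na(D,v)=Na(D,v_{s})=Na(D,t)$. Consequently $D[X]_{S_{v}}=D[X]_{S_{t}}$ and $I[X]_{S_{v}}=I[X]_{S_{t}}$, so every statement below phrased with $S_{v}$ is literally a statement about $S_{t}$, and $t$ is indeed a finite type star operation, which is what is required to invoke Theorem \ref{Theorem G3}.

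For the forward implication I would argue as follows. Suppose $I$ is a $t$-invertible $t$-ideal of finite type of $D$. Applying Theorem \ref{Theorem G3} with $\ast =t$ yields that $I[X]_{S_{t}}=I[X]_{S_{v}}$ is an invertible ideal of $D[X]_{S_{t}}=D[X]_{S_{v}}$. Now apply Theorem \ref{Theorem G4} with $\ast =t$: every invertible ideal of $D[X]_{S_{t}}$ is principal. Hence $I[X]_{S_{v}}$ is principal.

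For the converse, suppose $I[X]_{S_{v}}$ is principal. A nonzero principal ideal of a domain is invertible, so $I[X]_{S_{v}}=I[X]_{S_{t}}$ is invertible in $D[X]_{S_{t}}$, and Theorem \ref{Theorem G3} (with $\ast =t$) gives that $I$ is $t$-invertible. This closes the equivalence.

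There is no real obstacle here: the corollary is a packaging of the two cited theorems, and the only point that needs a line of care is the bookkeeping identity $S_{v}=S_{t}$ (so that the conclusion of Theorem \ref{Theorem G3}, which produces an \emph{invertible} ideal, can be fed into Theorem \ref{Theorem G4}, which upgrades invertible to principal in exactly the same Nagata ring) together with the remark that a nonzero principal ideal is automatically invertible, which is what makes the converse direction go through.
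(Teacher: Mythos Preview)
Your proof is correct and follows essentially the same approach as the paper: both directions rest on Theorems \ref{Theorem G3} and \ref{Theorem G4}, together with the identification $S_{v}=S_{t}$ (since $(A_{f})_{t}=(A_{f})_{v}$ for finitely generated $A_{f}$). The only cosmetic difference is that in the direction ``$I$ $t$-invertible $\Rightarrow$ $I[X]_{S_{v}}$ principal'' the paper first passes to a finitely generated $F$ with $F_{t}=I$, applies the two theorems to $F$, and then argues $F[X]_{S_{v}}=(F[X]_{S_{v}})_{t}=I[X]_{S_{v}}$ via Proposition \ref{Proposition G2}; your direct application of Theorem \ref{Theorem G3} to $I$ itself is a legitimate shortcut since that theorem is stated for an arbitrary nonzero ideal.
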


\begin{proof}
If $I[X]_{S_{v}}$ is principal then $I[X]_{S_{v}}$ is invertible and so $I$
is $t$-invertible by Theorem \ref{Theorem G3}. Conversely let $F$ be a
finitely generated ideal such that $F_{t}=I.$ Then $F$ is $t$-invertible and
so, by Theorem \ref{Theorem G3}, is $F[X]_{S_{v}}$ invertible and hence
principal by Theorem \ref{Theorem G4}. But then $%
F[X]_{S_{v}}=(F[X]_{S_{v}})_{t}=I[X]_{S_{v}}.$
\end{proof}

\begin{lemma}
\label{Lemma G6} Let $I$ be a $t$-ideal of finite type of $D.$ Then $%
I[X]_{S_{v}}$ is $d$-homogeneous if and only if $I$ is $t$-homogeneous.
Consequently $I[X]_{S_{v}}$ is $t$-f-rigid if and only if $I$ is $t$-super
homogeneous.
\end{lemma}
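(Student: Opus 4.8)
The plan is to transport the statement across the Nagata dictionary between $D$ and $R:=D[X]_{S_{v}}$ recorded in Propositions \ref{Proposition G1} and \ref{Proposition G2}, supplemented by two routine facts: writing $\mathfrak{m}=MD_{M}$ for a maximal $t$-ideal $M$ of $D$, the maximal ideal $M[X]_{S_{v}}$ of $R$ localizes as $R_{M[X]_{S_{v}}}=D_{M}[X]_{\mathfrak{m}[X]}$; and, for a finitely generated ideal $\mathfrak{c}$ of $D$, $(\mathfrak{c})_{t}D_{M}=\mathfrak{c}D_{M}$, since $(\mathfrak{c})_{t}$ is the intersection of the $\mathfrak{c}D_{N}$ over all maximal $t$-ideals $N$ and hence is contained in $\mathfrak{c}D_{M}$. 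First I record the contraction rule: for a $t$-ideal $A$ of $D$, $A[X]_{S_{v}}\cap D[X]=A[X]$, whence $A[X]_{S_{v}}\cap D=A$. Indeed, if $sf\in A[X]$ with $s\in S_{v}$ and $f\in D[X]$, then $A_{sf}\subseteq A$, and as in the proof of Proposition \ref{Proposition G2} (via \cite{MNZ 1990} and $(A_{s})_{v}=D$) one gets $(A_{f})_{v}=(A_{sf})_{v}=(A_{sf})_{t}\subseteq A_{t}=A$, so $f\in A[X]$. Since, by Proposition \ref{Proposition G1}(2), the maximal ideals of $R$ are precisely the $M[X]_{S_{v}}$ with $M$ a maximal $t$-ideal of $D$ --- each being a $t$-ideal of $R$ by Proposition \ref{Proposition G2}(3), so that maximal ideals and maximal $t$-ideals of $R$ coincide --- the contraction rule yields, for $t$-ideals $A$ of $D$: $A[X]_{S_{v}}\subseteq M[X]_{S_{v}}\iff A\subseteq M$, and $A[X]_{S_{v}}$ is proper iff $A$ is.

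For the first equivalence, write $I=(I_{0})_{t}$ with $I_{0}=(a_{1},\dots,a_{n})D$ finitely generated; then $I_{0}R$ is a finitely generated ideal of $R$ with $(I_{0}R)_{t}=I[X]_{S_{v}}$ (Proposition \ref{Proposition G2}(3)), and it lies in exactly those maximal ideals $M[X]_{S_{v}}$ for which $M\supseteq I$, a set that depends only on $I$. Reading ``$I[X]_{S_{v}}$ is $d$-homogeneous'' as ``$I_{0}R$ is $d$-rigid'' (in the sense of \cite{HZ 2019}), equivalently as ``$I[X]_{S_{v}}$ is a finite-type $t$-homogeneous ideal of $R$'', the previous paragraph makes this property equivalent to $I$ being contained in a unique maximal $t$-ideal of $D$, i.e. to $I$ being $t$-homogeneous.

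For the ``consequently'', assume these equivalent conditions, with $M$ the unique maximal $t$-ideal over $I$. One direction is immediate: if $J=(b_{1},\dots,b_{m})_{t}\supseteq I$ is $t$-homogeneous and $t$-invertible, then $J[X]_{S_{v}}$ is invertible by Theorem \ref{Theorem G3} and hence principal by Theorem \ref{Theorem G4} (equivalently, by Corollary \ref{Corollary G5}). Conversely, let $\mathfrak{B}=(g_{1},\dots,g_{k})R$, $g_{i}\in D[X]$, be a proper finitely generated ideal of $R$ lying in the unique maximal ideal $M[X]_{S_{v}}$ and containing $I[X]_{S_{v}}$, and set $\mathfrak{c}=A_{g_{1}}+\dots+A_{g_{k}}$, a finitely generated ideal of $D$. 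Each $g_{i}\in M[X]_{S_{v}}\cap D[X]=M[X]$, so $\mathfrak{c}\subseteq M$; and $\mathfrak{B}\subseteq\mathfrak{c}R\subseteq(\mathfrak{c})_{t}[X]_{S_{v}}$, so contracting to $D$ gives $I=I[X]_{S_{v}}\cap D\subseteq((\mathfrak{c})_{t}[X]_{S_{v}})\cap D=(\mathfrak{c})_{t}$. Thus $(\mathfrak{c})_{t}$ is a $t$-homogeneous ideal over $I$, hence $t$-invertible by hypothesis, so (being locally principal at the maximal $t$-ideals) $\mathfrak{c}D_{M}=(\mathfrak{c})_{t}D_{M}$ is principal, say $cD_{M}$. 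Then each $g_{i}$ has all coefficients in $cD_{M}$, so $g_{i}=ch_{i}$ with $h_{i}\in D_{M}[X]$ and $\sum_{i}A_{h_{i}}=c^{-1}\mathfrak{c}D_{M}=D_{M}$; since $D_{M}$ is local, some $h_{i}$ has a unit coefficient, so is a unit of $D_{M}[X]_{\mathfrak{m}[X]}=R_{M[X]_{S_{v}}}$, whence $\mathfrak{B}R_{M[X]_{S_{v}}}=cR_{M[X]_{S_{v}}}$. As $\mathfrak{B}$ is finitely generated, contained in the single maximal ideal $M[X]_{S_{v}}$ and locally principal (there by the last line, trivially elsewhere), $\mathfrak{B}$ is invertible, hence principal by Theorem \ref{Theorem G4}. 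A short bookkeeping step --- using that maximal and maximal $t$-ideals of $R$ coincide and that principal ideals are $t$-closed --- then promotes ``every finitely generated $d$-homogeneous ideal over $I_{0}R$ is principal'' to ``every finite-type $t$-homogeneous ideal over $I[X]_{S_{v}}$ is principal'', yielding: $I[X]_{S_{v}}$ is $t$-f-rigid iff $I$ is $t$-super homogeneous.

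The delicate point is the converse just sketched: one must pin down the content ideal $\mathfrak{c}$ so that $(\mathfrak{c})_{t}$ is genuinely $t$-homogeneous and contains $I$, and be scrupulous about the pervasive gap between a finite-type ideal and its $t$-closure --- which is exactly why the argument is run after localizing at $M$, where that gap collapses --- and then check that this passes cleanly through the identifications between $d$- and $t$-homogeneity (and between $d$- and $t$-f-rigidity) of the relevant ideals inside $R$.
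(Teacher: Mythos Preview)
Your argument for the first equivalence is essentially the paper's: both use Proposition~\ref{Proposition G1}(2) to identify the maximal ideals of $R=D[X]_{S_v}$ with the $M[X]_{S_v}$ and then contract. You add a clean justification of the contraction rule $A[X]_{S_v}\cap D=A$ for $t$-ideals $A$, which the paper uses implicitly.

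For the ``consequently'' clause the paper simply writes ``follows from Corollary~\ref{Corollary G5}.'' That one line is really only the easy direction (if $J\supseteq I$ is $t$-homogeneous and $t$-invertible, then $J[X]_{S_v}$ is principal), together with the tacit assumption that every finite-type $t$-homogeneous ideal of $R$ over $I[X]_{S_v}$ is an extended ideal $J[X]_{S_v}$. You do not assume this: given an arbitrary $\mathfrak B=(g_1,\dots,g_k)R$ over $I[X]_{S_v}$, you pass to the content ideal $\mathfrak c=\sum A_{g_i}$, observe $(\mathfrak c)_t$ is a $t$-homogeneous ideal of $D$ over $I$ (hence $t$-invertible by hypothesis), localize at $M$ to make $\mathfrak c D_M$ principal, and conclude $\mathfrak B$ is locally principal, hence invertible, hence principal by Theorem~\ref{Theorem G4}. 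This is correct and genuinely more complete than the paper's appeal to Corollary~\ref{Corollary G5}; it closes the gap of why non-extended homogeneous ideals of $R$ are still forced to be principal. The trade-off is that your route uses the local identification $R_{M[X]_{S_v}}\cong D_M[X]_{\mathfrak m[X]}$ and the collapse $(\mathfrak c)_tD_M=\mathfrak cD_M$, neither of which the paper invokes, but both are standard and you state them up front.
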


\begin{proof}
Let $I$ be a $t$-homogeneous ideal of $D.$ That $I[X]_{S_{v}}$ is a $t$%
-ideal of finite type is an immediate consequence of Proposition \ref%
{Proposition G2}. If $M$ is the unique maximal $t$-ideal containing $I,$
then at least $M[X]_{S_{v}}\supseteq I[X]_{S_{v}}.$ Suppose that $\mathcal{N}
$ is another maximal ideal of $D[X]_{S_{v}}$ containing $I[X]_{S_{v}}.$ But
by Proposition \ref{Proposition G1}, $\mathcal{N}$ $=N[X]_{S_{v}}$ for some
maximal $t$-ideal $N$ of $D.$ But then $N=D\cap N[X]_{S_{v}}\supseteq D\cap
I[X]_{S_{v}}\supseteq I$ . This forces $N=M$ and consequently $%
N[X]_{S_{v}}=M[X]_{S_{v}}$ making $I[X]_{S_{v}}$ homogeneous.

Conversely if $I[X]_{S_{v}}$ is $d$-homogeneous contained in a unique $%
M[X]_{S_{v}},$suppose that $N$ is another maximal $t$-ideal containing $I.$
Then again $N[X]_{S_{v}}\supseteq ID[X]_{S_{v}}$ which is $d$-homogeneous, a
contradiction unless $N=M.$

The consequently part follows from Corollary \ref{Corollary G5}.
\end{proof}

Let's all a domain $\ast $-f-r-potent if every maximal $\ast $-ideal of $D$
contains a $\ast $-f-rigid element.

\begin{proposition}
\label{Proposition G7} Let $D$ be an integral domain with quotient field $K$%
, $X$ an indeterminate over $D$ and let $S_{v}=\{f\in D[X]|(A_{f})_{v}=D\}.$
Then (a) $D$ is $t$-potent if and only if $D[X]_{S_{v}}$ is $d$-potent and
(b) $D$ is $t$-super potent if and only if $D[X]_{S_{v}}$ is $d$-f-r-potent
\end{proposition}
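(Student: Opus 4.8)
The plan is to lean on two things already in hand: Proposition \ref{Proposition G1}, which shows that $M\mapsto M[X]_{S_{v}}$ is a bijection from the maximal $t$-ideals of $D$ onto the maximal ideals of $D[X]_{S_{v}}$, with inverse $\mathcal N\mapsto\mathcal N\cap D$ and with $M[X]_{S_{v}}\cap D[X]=M[X]$; and Lemma \ref{Lemma G6} together with Corollary \ref{Corollary G5}, which say that a $t$-ideal $I$ of finite type of $D$ is $t$-homogeneous exactly when $I[X]_{S_{v}}$ is $d$-homogeneous, and is $t$-super homogeneous exactly when $I$ is $t$-invertible with $I[X]_{S_{v}}$ principal on a $d$-f-rigid generator. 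Each of (a) and (b) then splits into an easy ``going up'' half (extend a homogeneous ideal of $D$ up to $D[X]_{S_{v}}$) and a ``coming down'' half (manufacture a homogeneous ideal of $D$ from one of $D[X]_{S_{v}}$ by passing to the content of its generators).

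For the going-up halves: if $D$ is $t$-potent and $\mathcal M=M[X]_{S_{v}}$ is a maximal ideal of $D[X]_{S_{v}}$, pick a $t$-homogeneous $I\subseteq M$; by Lemma \ref{Lemma G6} the ideal $I[X]_{S_{v}}$ is $d$-homogeneous and $I[X]_{S_{v}}\subseteq\mathcal M$, so $\mathcal M$ is potent, and since $\mathcal M$ was arbitrary $D[X]_{S_{v}}$ is $d$-potent. If instead $D$ is $t$-super potent, take $I\subseteq M$ $t$-super homogeneous; then $I$ is $t$-invertible, so $I[X]_{S_{v}}=\theta D[X]_{S_{v}}$ is principal by Corollary \ref{Corollary G5}, and Lemma \ref{Lemma G6} makes $\theta$ a $d$-f-rigid element lying in $\mathcal M$; hence $D[X]_{S_{v}}$ is $d$-f-r-potent.

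For the coming-down halves, fix a maximal $t$-ideal $M$ of $D$, so $M[X]_{S_{v}}$ is a maximal ideal of $D[X]_{S_{v}}$. If $D[X]_{S_{v}}$ is $d$-potent, $M[X]_{S_{v}}$ contains a $d$-homogeneous, hence nonzero finitely generated, ideal $\mathcal I$; clearing denominators in $S_{v}$ write $\mathcal I=(g_{1},\dots,g_{k})D[X]_{S_{v}}$ with $g_{i}\in M[X]_{S_{v}}\cap D[X]=M[X]$, and put $A=A_{g_{1}}+\dots+A_{g_{k}}$, a nonzero finitely generated ideal of $D$ with $A\subseteq M$. Then $A_{t}$ is $t$-homogeneous: if $A$ lay in a maximal $t$-ideal $N$, then each $g_{i}\in A_{g_{i}}[X]\subseteq A[X]\subseteq N[X]$, so $\mathcal I\subseteq N[X]_{S_{v}}$, and $d$-homogeneity of $\mathcal I$ forces $N=M$. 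Thus $M$ contains the $t$-homogeneous ideal $A_{t}$, and $D$ is $t$-potent. If $D[X]_{S_{v}}$ is $d$-f-r-potent, run the same argument starting from a $d$-f-rigid element of $M[X]_{S_{v}}$, which we may take to be a polynomial $g\in M[X]$; with $A=A_{g}$, the ideal $(A_{g})_{t}$ is $t$-homogeneous as above, and it remains to see it is $t$-super homogeneous. Given a $t$-homogeneous $J\supseteq(A_{g})_{t}$, choose a finitely generated $C$ with $A_{g}\subseteq C\subseteq J$ and $C_{t}=J$; then $C[X]_{S_{v}}$ is finitely generated, contains $g$, and is $d$-homogeneous (any maximal ideal containing it contains $g$, hence equals $M[X]_{S_{v}}$), so by $d$-f-rigidity of $g$ it is principal, hence invertible, hence by Theorem \ref{Theorem G3} the ideal $C$, and thus $J=C_{t}$, is $t$-invertible. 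So $(A_{g})_{t}$ is a $t$-super homogeneous ideal inside $M$, and $D$ is $t$-super potent.

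I expect the main friction to be the bookkeeping in the coming-down half of (b): keeping straight the passage between finitely generated ideals of $D$, their $t$-closures, and their extensions to $D[X]_{S_{v}}$, and transporting $t$-invertibility and principality across the extension. The technical lubricant is the elementary identity $F[X]_{S_{v}}\cap D=F_{t}$ for a finitely generated ideal $F$ of $D$: if $a\in F[X]_{S_{v}}\cap D$ then $as\in F[X]$ for some $s\in S_{v}$, so $aA_{s}=A_{as}\subseteq F$, and hence $aD=a(A_{s})_{v}=(aA_{s})_{v}\subseteq F_{v}=F_{t}$, using $(A_{s})_{v}=D$ for $s\in S_{v}$ and $F_{v}=F_{t}$ for finitely generated $F$. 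Combined with Proposition \ref{Proposition G2} (an extension of a $t$-finite ideal is $t$-finite), Corollary \ref{Corollary G5}, and Theorems \ref{Theorem G3} and \ref{Theorem G4}, this is exactly what makes the dictionary between ``$d$-homogeneous ideal of $D[X]_{S_{v}}$ inside $M[X]_{S_{v}}$'' and ``$t$-homogeneous ideal of $D$ inside $M$'' precise; once that dictionary is fixed, (a) and (b) are the two routine readings of it above.
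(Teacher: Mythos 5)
Your proposal is correct and follows essentially the same route as the paper: the going-up halves are Lemma \ref{Lemma G6} plus Corollary \ref{Corollary G5} applied to a ($t$-super) homogeneous ideal of $D$, and the coming-down halves pass from generators $g_{i}\in M[X]_{S_{v}}\cap D[X]=M[X]$ of a $d$-homogeneous ideal to the content ideal $(A_{I})_{t}\subseteq M$, exactly as in the paper's proof of (a). Your expanded treatment of (b) -- extending a finitely generated $C$ with $C_{t}=J$ up to the principal, hence invertible, ideal $C[X]_{S_{v}}$ and pulling $t$-invertibility back via Theorem \ref{Theorem G3} -- is just the honest filling-in of the paper's one-line ``use part (a) and Corollary \ref{Corollary G5},'' and it checks out.
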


\begin{proof}
(a) Suppose that $D$ is potent Let $M[X]_{S_{v}}$ be a maximal ideal of $%
D[X]_{S_{v}}$ and let $I$ be a $t$-homogeneous ideal contained in $M.$ By
Lemma \ref{Lemma G6}, $I[X]_{S_{v}}$ is $d$-homogeneous, making $%
M[X]_{S_{v}} $ $d$-potent. Conversely suppose that $D[X]_{S_{v}}$ is $d$%
-potent and let $M $ be a maximal $t$-ideal of $D.$ Then $M[X]_{S_{v}}$ is a
maximal ideal of $D[X]_{S_{v}}$ and so contains a $d$-homogeneous ideal $%
\mathcal{I}$ $=(f_{1},f_{2},...,f_{n})D[X]_{S_{v}}.$ Now let $%
I=(f_{1},f_{2},...,f_{n})$. Then $\mathcal{I}$ $=ID[X]_{S_{v}}$ and $%
I\subseteq (A_{I})_{t}[X]_{S_{v}}\subseteq M[X]_{S_{v}},$ since $%
M[X]_{S_{v}} $ is a $t$-ideal and $f_{i}\in M[X]_{S_{v}}\cap D[X].$ This
gives $\mathcal{I}$ $=ID[X]_{S_{v}}$ $\subseteq
(A_{I})_{t}[X]_{S_{v}}\subseteq M[X]_{S_{v}}$ making $(A_{I})_{t}[X]_{S_{v}}$
another homogeneous ideal, contained in $M[X]_{S_{v}}$ and containing $%
\mathcal{I}$. But then $(A_{I})_{t}$ $\subseteq M$ is a $t$-homogeneous
ideal, by Lemma \ref{Lemma G6}. (b) Use part (a) and Corollary \ref%
{Corollary G5}.
\end{proof}

The other property that can be mentioned \textquotedblleft off
hand\textquotedblright\ is given in the following statement.

\begin{theorem}
\label{Theorem H1} A $t$-f-potent domain of $t$-dimension one is a GCD
domain of finite $t$-character.{}
\end{theorem}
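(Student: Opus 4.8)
The plan is to reduce the whole statement to one structural fact: \emph{every nonzero non unit of $D$ is a finite product of $t$-f-rigid elements.} Once this is established, $D$ is a semirigid GCD domain in the sense of \cite{Zaf 1975} — in particular a GCD domain — and it is of finite $t$-character, because a nonzero non unit written as a finite product of $t$-f-rigid (hence rigid, \cite{Coh 1968}) elements, refined into a product of mutually $t$-comaximal $t$-f-rigid factors, lies in exactly the finitely many maximal $t$-ideals $M(r_iD)$ attached to those factors. Throughout I use the facts recalled above: a $t$-f-rigid element is rigid and an extractor of \cite{AZ 1995} (hence completely primal), every non unit factor and every power of a $t$-f-rigid element is again $t$-f-rigid, and a finite type $t$-homogeneous ideal containing a $t$-f-rigid element is principal.

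The first — and main — step is to prove that $D$ is a PVMD, i.e. that $D_M$ is a valuation domain for each maximal $t$-ideal $M$. Here $t$-dimension one is used decisively: $M$ has height one, so $D_M$ is a one-dimensional quasilocal domain with maximal ideal $\mathfrak m = MD_M = \sqrt{rD_M}$, where $r$ is a $t$-f-rigid element with $M = M(rD)$. Since extraction commutes with localization, $r$ is still an extractor in $D_M$; one checks that factors of an extractor are extractors and that extractors obey Euclid's lemma (if $a$ is an extractor, $a\mid bc$ and $\gcd(a,c)=1$ then $a\mid b$, whence an element coprime to an extractor is coprime to all its powers). Given $x\in\mathfrak m\setminus\{0\}$, set $g=\gcd_{D_M}(r,x)$; as $x^k\in rD_M$ for some $k$, $g$ is a non unit, and splitting into the cases ``$r/g$ a unit'' and ``$r/g$ a non unit'' (in the latter $r/g\in\mathfrak m=\sqrt{rD_M}$, and coprimality of $r/g$ and $x/g$ forces $x/g$ to be a unit) shows $r\mid x$ or $x\mid r$ in $D_M$; the same holds with each power $r^{\,n}$ in place of $r$. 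A short bootstrap from comparability with all the $r^{\,n}$ (which also yields $\bigcap_n r^{\,n}D_M=0$) gives that any two elements of $\mathfrak m$ are comparable, so $D_M$ is a valuation domain, necessarily of rank one. Since $D=\bigcap\{D_M:M\text{ a maximal }t\text{-ideal}\}$ for the finite character operation $t$, $D$ is a PVMD. \textbf{This valuation-domain step is the one genuinely technical point; everything after it is bookkeeping with the extractor/primality machinery.}

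Now fix a nonzero non unit $x$ and let $A$ be the set of maximal $t$-ideals containing $x$. For $M=M(r_MD)\in A$, the ideal $(x,r_M^{\,n})_t$ is a finite type $t$-homogeneous ideal containing the $t$-f-rigid element $r_M^{\,n}$, hence is principal, say $(x,r_M^{\,n})_t=d_M^{(n)}D$. Since $D_M$ is a rank one valuation domain, $v_M(x)$ is finite, so for $n$ large $r_M^{\,n}D_M\subseteq xD_M$ and $d_M:=d_M^{(n)}$ satisfies $v_M(x/d_M)=0$ while $v_{M'}(d_M)=0$ for $M'\neq M$; thus $d_M$ is ``the $M$-part of $x$'': a $t$-f-rigid factor of $x$ with $M(d_MD)=M$ and $x/d_M\notin M$. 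Distinct $d_M$'s are then mutually $t$-comaximal, and since each is a completely primal extractor, Euclid's lemma gives $\prod_{M\in F}d_M\mid x$ for every finite $F\subseteq A$.

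Finally I claim $A$ is finite. If not, form the directed union $J=\bigcup\bigl\{\,\bigl(x/\prod_{M\in F}d_M\bigr)D : F\subseteq A\text{ finite}\,\bigr\}$; it is a $t$-ideal (a directed union of $t$-ideals under the finite character operation $t$), and it is proper, for if some $x/\prod_{M\in F}d_M$ were a unit then $x$ would be associate to a finite product of $t$-f-rigid elements in the distinct maximal $t$-ideals $\{M:M\in F\}$, forcing $A=F$ to be finite. Hence $J$ lies in some maximal $t$-ideal $N$; as $x\in J$ (the term $F=\emptyset$) we get $N\in A$, say $N=M_0$. But then the term $F=\{M_0\}$ gives $x/d_{M_0}\in J\subseteq M_0$, contradicting $x/d_{M_0}\notin M_0$. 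So $A=\{M_1,\dots,M_k\}$ is finite, and $v_M\bigl(x/\prod_{i=1}^k d_{M_i}\bigr)=0$ for every maximal $t$-ideal $M$; since $D=\bigcap_M D_M$, the element $x/\prod_{i=1}^k d_{M_i}$ is a unit, so $x=\bigl(\prod_{i=1}^k d_{M_i}\bigr)u$ is a finite product of $t$-f-rigid elements. This is the required reduction, and the theorem follows.
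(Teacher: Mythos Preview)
Your argument is correct, but it takes a substantially different route from the paper's. The paper proceeds in two quick strokes: it invokes \cite[Theorem~5.3]{HZ 2019} (applicable because $t$-f-rigid ideals are $t$-super homogeneous) to conclude at once that a $t$-f-potent domain of $t$-dimension one is of finite $t$-character, hence weakly Krull; then Lemma~\ref{Lemma K} observes that in a weakly Krull domain any $t$-homogeneous ideal $J$ similar to $rD$ must contain some power $r^n$ and is therefore principal, so $D$ is a $t$-f-SH domain, hence a semirigid GCD domain. By contrast you work entirely from first principles: you first prove directly that each $D_M$ is a valuation domain (using $t$-f-rigidity and height one), then manufacture the ``$M$-part'' $d_M$ of a given $x$ via $(x,r_M^{\,n})_t$, and finally run an elegant directed-union argument to force the set $A$ of maximal $t$-ideals containing $x$ to be finite. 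What the paper's approach buys is brevity and modularity with the $\ast$-super-potent machinery of \cite{HZ 2019}; what yours buys is a fully self-contained proof that does not depend on that external reference and that yields the intermediate structural fact that $D$ is a PVMD along the way.

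One remark: the phrase ``a short bootstrap from comparability with all the $r^{\,n}$ (which also yields $\bigcap_n r^{\,n}D_M=0$)'' hides the crucial step. The clean way to justify it is to note that $Q=\bigcap_n r^{\,n}D_M$ is prime: if $a,b\notin Q$ then $a\mid r^{\,n}$ and $b\mid r^{\,m}$ for some $n,m$ by your comparability, so $ab\mid r^{\,n+m}$, whereas $ab\in Q$ would give $r^{\,n+m+1}\mid ab$, forcing $r$ to be a unit. Since $D_M$ is one-dimensional and $r\notin Q$, one gets $Q=0$, and then every nonzero $x\in\mathfrak m$ divides some $r^{\,N}$; rigidity of $r^{\,N}$ in $D$ (transported to $D_M$ via $(r^N,x)_t=g_xD$ and $x\sim g_x$ in $D_M$) then gives comparability of any two such elements. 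Spelling this out would make your valuation-domain step watertight.
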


A domain of $t$-dimension one that is of finite $t$-character is called a
weakly Krull domain. ($D$ is weakly Krull if $D=\cap D_{P}$ where $P$ ranges
over a family $\mathcal{F}$ of height one prime ideals of $D$ and each
nonzero non unit of $D$ belongs to at most a finite number of members of $%
\mathcal{F}$.) A weakly Krull domain $D$ is dubbed in \cite{AZ 2019} as $%
\ast $-weakly Krull domain or as a type $1$ $\ast $-SH domain. Here a $\ast $%
-homogeneous ideal $I$ is said to be of type $1$ if $M(I)=\sqrt{I^{\ast }}$
and $D$ is a type $1$ $\ast $-SH domain if every nonzero non unit of $D$ is
a $\ast $-product of finitely many $\ast $-homogeneous ideals of type $1$.
In the following lemma we set $\ast =t.$

\begin{lemma}
\label{Lemma K} A $t$-f-potent weakly Krull domain is a type $1$ $t$-f-SH
domain.
\end{lemma}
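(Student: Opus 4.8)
The plan is to get the result out of two facts: the characterization (from \cite{AZ 2019}) that a weakly Krull domain is precisely a type $1$ $t$-SH domain, and the fact that a $t$-f-potent weakly Krull domain is a GCD domain. Fix a nonzero non unit $a\in D$. Since $D$ is weakly Krull it is a type $1$ $t$-SH domain, so $aD=(I_{1}\cdots I_{n})^{t}$ for finitely many type $1$ $t$-homogeneous ideals $I_{j}$, which, after merging similar ones, we may take mutually $t$-comaximal, with $M(I_{j})=\sqrt{I_{j}}$. The only thing to do is to improve each $I_{j}$ from being $t$-homogeneous to being $t$-f-homogeneous.

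The key input is that $D$ is a GCD domain. A weakly Krull domain has $t$-dimension one, so Theorem \ref{Theorem H1} applies to the $t$-f-potent domain $D$ and makes it a GCD domain of finite $t$-character, hence a GUFD. In a GCD domain every finite type $t$-ideal is principal, since such an ideal equals $(b_{1},\dots,b_{m})_{v}=\gcd(b_{1},\dots,b_{m})D$; so each $I_{j}$ is principal, say $I_{j}=q_{j}D$, and so is every finite type $t$-homogeneous ideal of $D$.

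It remains to observe that each $q_{j}D$ is a type $1$ $t$-f-homogeneous ideal. It is $t$-homogeneous by construction; it is of type $1$ because $\sqrt{q_{j}D}=\sqrt{I_{j}}=M(I_{j})=M(q_{j}D)$; and $q_{j}$ is $t$-f-rigid, since it lies in a unique maximal $t$-ideal and, $D$ being a GCD domain, every finite type $t$-homogeneous ideal containing $q_{j}$ is principal. Hence $aD=(q_{1}D\cdots q_{n}D)^{t}$ presents $a$ as a $t$-product of finitely many type $1$ $t$-f-homogeneous ideals (up to a unit, $a=q_{1}\cdots q_{n}$ with the $q_{j}$ pairwise $t$-comaximal $t$-f-rigid elements of prime radical). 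As $a$ was arbitrary, $D$ is a type $1$ $t$-f-SH domain.

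The substance of the argument is thus Theorem \ref{Theorem H1}: once $D$ is known to be a GCD domain, $t$-f-homogeneity of a finite type $t$-homogeneous ideal comes at no cost, and the lemma is immediate from the weakly Krull characterization. An approach that avoids Theorem \ref{Theorem H1} would instead try to show each type $1$ component $I_{j}$ is directly $t$-f-homogeneous: $t$-f-potence supplies a $t$-f-rigid element $r_{j}\in M(I_{j})$, and $D$ being weakly Krull gives $\sqrt{r_{j}D}=M(I_{j})$ (the minimal primes of $r_{j}D$ are maximal $t$-ideals and only one contains the $t$-f-rigid element $r_{j}$), while the ideals $(I_{j}+r_{j}^{k}D)^{t}$ are $t$-homogeneous and contain the $t$-f-rigid elements $r_{j}^{k}$, hence are principal; but passing from this to principality of an arbitrary finite type $t$-homogeneous ideal containing $I_{j}$ is the delicate step, and that is exactly where going through the GCD structure is the cleaner move.
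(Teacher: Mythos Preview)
Your main argument is circular. You invoke Theorem~\ref{Theorem H1} to conclude that $D$ is a GCD domain, but in the paper the proof of Theorem~\ref{Theorem H1} is given \emph{after} Lemma~\ref{Lemma K} and explicitly relies on Lemma~\ref{Lemma K}: it first reduces to the weakly Krull case and then ``applies Lemma~\ref{Lemma K}'' to obtain the $t$-f-SH (hence GCD) conclusion. So you are using the lemma to prove itself.

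The irony is that the ``alternative'' route you sketch in your last paragraph, and dismiss as delicate, is exactly what the paper does, and it is not delicate at all. The point you are missing is the direction of the radical containment. Since each $I_{j}$ is of type $1$, we have $\sqrt{I_{j}}=M(I_{j})$; the $t$-f-rigid element $r_{j}\in M(I_{j})$ therefore satisfies $r_{j}^{n}\in I_{j}$ for some $n$ (not merely $I_{j}\subseteq (I_{j}+r_{j}^{k}D)^{t}$ as you wrote). Now $r_{j}^{n}$ is again $t$-f-rigid, and every finite type $t$-homogeneous ideal $K\supseteq I_{j}$ contains $r_{j}^{n}$, so $K$ is principal by the very definition of $t$-f-rigidity. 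Hence $I_{j}$ is $t$-f-homogeneous. The paper phrases this as: for similar type $1$ homogeneous ideals $I,J$ one has $I^{n}\subseteq J^{t}$ and $J^{m}\subseteq I^{t}$, so $t$-f-homogeneity passes between similar ideals; combined with $t$-f-potence this forces every $t$-homogeneous ideal to be $t$-f-homogeneous, and the GCD property falls out as a \emph{consequence}, not an input.
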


\begin{proof}
A weakly Krull domain is a type $1$ $t$-SH domain. But then for every pair $%
I,J$ of similar homogeneous ideals $I^{n}\subseteq J^{\ast }$ and $%
J^{m}\subseteq I^{\ast }$ for some positive integers $m,n.$ So $J$ is a $t$%
-f-homogeneous ideal if $I$ is and vice versa. Thus in a $t$-f-potent weakly
Krull domain the $t$-image of every $t$-homogeneous ideal is principal
whence every nonzero non unit of $D$ is expressible as a product of $t$%
-f-homogeneous elements which makes $D$ a $t$-f-SH domain and hence a GCD
domain.
\end{proof}

\begin{proof}
of Theorem \ref{Theorem H1} Use Theorem 5.3 of \cite{HZ 2019} for $\ast =t$
to decide that $D$ is of finite $t$-character and of $t$-dimension one.
Indeed, that makes $D$ a weakly Krull domain that is $t$-f-potent. The proof
would be complete once we apply Lemma \ref{Lemma K} and note that a $t$-f-SH
domain is a GCD domain and of course of finite $t$-character.
\end{proof}

Generally a domain that is $t$-f-potent and with $t$-dimension $>1,$ is not
necessarily GCD nor of finite $t$-character.

\begin{example}
\label{Example L}$D=Z+XL[[X]]$ where $Z$ is the ring of integers and $L$ is
a proper extension of $Q$ the ring of rational numbers. Indeed $D$ is prime
potent and two dimensional but neither of finite $t$-character nor a GCD
domain.
\end{example}

There are some special cases, in which a $t$-f-potent domain is GCD of
finite $t$-character.

i) If every nonzero prime ideal contains a $t$-f- homogeneous ideal. (Use
(4) of Theorem 5 of \cite{AZ 2019}) along with the fact that $D$ is a $t$%
-f-SH domain if and only if $D$ is a $t$-SH domain with every $t$%
-homogeneous ideal $t$-f-homogeneous. Thus a $t$-f-potent domain of $t$-dim $%
1$ is of finite character.

ii) If $D$ is a $t$-f-potent PVMD of finite $t$-character that contains a
set $S$ multiplicatively generated by $t$-f-homogeneous elements of $D$ and
if $D_{S}$ is a GCD domain then so is $D.$

I'd be doing a grave injustice if I don't mention the fact that before there
was any modern day multiplicative ideal theory there were prime potent
domains as $Z$ the ring of integers and the rings of polynomials over them.
It is also worth mentioning that there are three dimensional prime potent
Prufer domains that are not Bezout. The examples that I have in mind are due
to Loper \cite{Lop 1999}. These are non-Bezout Prufer domains whose maximal
ideals are generated by principal primes.

\section{\label{S3}$\ast $-finite ideal monoids}

In \cite{YZ 2011}, we called a directed p.o. group $G$ pre-Riesz if its
positive elements satisfied the following property.

(pR): If $x_{1},x_{2},...,x_{n}$ are strictly positive elements in $G$ and $%
x_{i}$ are such that there is at least one $g\in G$ with $g\nleq 0$ such
that $g\leq x_{1},x_{2},...,x_{n}$ then there is at least one $r\in G$ such
that $0<r\leq x_{1},x_{2},...,x_{n}.$

By a basic element, in the above paper, we meant a strictly positive element 
$c\in G$ such that for every pair of strictly positive elements $c_{1},c_{2}$
preceding $c$ we have $r\in G$ such that $0<r\leq c_{1},c_{2}.$

Note that it is essentially the positive cone $G^{+}=\{g\in G|g\geq 0\}$ of
the pre-Riesz group that satisfies the (pR), but with reference to elements
of its main group. So let's call a commutative p.o. monoid $M=<M,+,0,\leq ~>$
a pre-Riesz monoid if $M$ is upper directed and satisfies (pR'): For any
finite set of strictly positive elements $x_{1},x_{2},...,x_{n}\in M,$
either $glb(x_{1},x_{2},...,x_{n})=0$ or there is $r\in M$ such that $%
0<r\leq x_{1},x_{2},...,x_{n}.$ Note that the `$+$' and `$0$' are mainly
symbolic, standing in for the monoid operation and the identity. Note also
that to avoid getting into trivialities we shall only consider non-trivial
pre-Riesz monoids, i.e., ones that are different from $\{0\}.$

Here, of course, we do not require that $a\leq b$ $\Leftrightarrow a+x=b$ $.$
The partial order may be pre-assigned but must be compatible with the binary
operation of the monoid. Let's call $M$ a divisibility p.o. monoid if in $M$ 
$a\leq b$ $\Leftrightarrow a+x=b,$ for some $x\in M.$

A monoid $M$ is said to have cancellation if $a+b=a+c$ implies $b=c.$
Obviously if in a cancellation monoid with order defined as above we have $%
a+b\leq a+c$ then $b\leq c.$

\begin{proposition}
\label{Proposition FX} Let $a,b\in M$ where $M$ is a divisibility pre-Riesz
monoid with cancellation. Then $lub(a,b)=a+b$ if and only if $glb(a,b)=0.$
\end{proposition}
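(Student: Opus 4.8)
The plan is to argue both implications inside the divisibility monoid $M$, using cancellation freely. For the forward direction, suppose $lub(a,b)=a+b$. Let $d=glb(a,b)$, which exists as an element of $M$ by hypothesis (or at least I must first check it does: since $M$ is a pre-Riesz monoid, a greatest lower bound of the finite set $\{a,b\}$ either is $0$ or is detected by the (pR') alternative — but what I actually need is that whenever a common lower bound exists at all, the glb exists; I will assume the statement is posed under the standing hypothesis that $glb(a,b)$ is meaningful, as is implicit in writing "$glb(a,b)=0$"). Write $a=d+a'$ and $b=d+b'$ for suitable $a',b'\in M$, using that $M$ is a divisibility monoid. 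Then $a+b=2d+a'+b'$, and I claim $d+a'+b'$ is a common \emph{upper} bound of $a$ and $b$: indeed $a=d+a'\le d+a'+b'$ and similarly $b\le d+a'+b'$. Since $lub(a,b)=a+b=d+(d+a'+b')$, we get $a+b\le d+a'+b'$, i.e. $2d+a'+b'\le d+a'+b'$; cancelling $a'+b'$ gives $2d\le d$, hence (cancelling $d$ once more against the relation $d\le 2d$ which always holds, or directly by cancellation in the order) $d\le 0$, so $d=0$.

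For the converse, suppose $glb(a,b)=0$. I want $lub(a,b)=a+b$. First, $a+b$ is visibly a common upper bound: $a\le a+b$ and $b\le a+b$ since $M$ is a divisibility monoid (write $a+b=a+b$). So it remains to show $a+b$ is the \emph{least} common upper bound. Let $u$ be any common upper bound of $a$ and $b$; write $u=a+x=b+y$ with $x,y\in M$. The goal is $a+b\le u$, i.e. there is $z$ with $u=a+b+z$. Here is where I expect the real work. From $u=a+x$ and $u=b+y$, I want to extract that $x\ge b$ or symmetrically. Consider $glb$-type reasoning: the elements $x$ and... hmm, a cleaner route is to apply the pre-Riesz property (pR') to a suitable pair. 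Suppose $a+b\not\le u$. Then in particular there is something preventing $b\le x$ (since $b\le x$ would give $u=a+x\ge a+b$, done). So $glb(b,x)\ne b$. The idea is: if $glb(b,x)=0$, derive a contradiction with $glb(a,b)=0$ being "tight"; if $glb(b,x)=r>0$, peel off $r$ and induct / iterate. This descent is the main obstacle — I need to ensure the iteration terminates or find a slicker argument.

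A cleaner finish for the converse: apply (pR') to the pair $\{x,b\}$ where $u=a+x=b+y$. If $glb(x,b)=0$: then I claim $x\ge$ nothing forces a contradiction directly, so instead apply (pR') to $\{a+x, \text{translates}\}$... Let me instead use the translation-invariance of the order. From $a+x=b+y$ we get, since the order is compatible with $+$, a "transfer" — consider $c:=glb(a,y)$. The honest statement is that divisibility monoids with this Riesz-type interpolation are exactly the ones where common refinements of the equation $a+x=b+y$ exist, and $glb(a,b)=0$ is precisely the condition that forces the refinement to be trivial on the $a,b$ side, yielding $b\mid x$ in the divisibility sense. I would carry this out by: (i) using (pR') to produce, from $a+x=b+y$, an element $r\le a$ and $r\le y$ with the "quotients" again satisfying a smaller instance, (ii) observing each such $r$ is also $\le glb(a, \cdot)$ and tracking how it interacts with $glb(a,b)=0$, and (iii) concluding $b\le x$, whence $u=a+x\ge a+b$. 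The termination of step (ii) is the crux; if $M$ has no descending-chain hypothesis I may need to invoke a maximality (Zorn) argument on the set of $r$'s common to $a$ and $x$ rather than a literal induction, and then cancellation pins down $b\le x$ exactly.
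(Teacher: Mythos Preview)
Your forward direction is essentially the paper's argument. You do not need to assume $d=glb(a,b)$ exists: simply take any $r>0$ with $r\le a,b$, write $a=r+a'$, $b=r+b'$, observe that $r+a'+b'$ is an upper bound of $a,b$, and conclude $a+b=2r+a'+b'\le r+a'+b'$, whence $r\le 0$ by cancellation, a contradiction. This is exactly what the paper does.

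Your converse, however, has a real gap, and the pre-Riesz property is a red herring: the paper never invokes (pR$'$) in this proof. You are missing one clean move. Arguing by contradiction, suppose $r$ is an upper bound of $a,b$ with $r<a+b$; in the divisibility order this means $a+b=r+z$ for some $z\ne 0$. Now write $r=a+x=b+y$ as you already did. Substituting $r=a+x$ into $a+b=r+z$ and cancelling $a$ gives $b=x+z$; symmetrically, from $r=b+y$ one gets $a=y+z$. Thus $z\le a$ and $z\le b$, so $z=0$ by the hypothesis $glb(a,b)=0$, forcing $r=a+b$ after all. No refinement, no iteration, no Zorn---just two cancellations. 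Your attempts to squeeze out $b\le x$ via (pR$'$) and a descent argument go nowhere precisely because you never wrote down the equation $a+b=r+z$, which is where the common lower bound $z$ of $a$ and $b$ comes from.
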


\begin{proof}
Suppose that $lub(a,b)=a+b$ and let there be $r>0$ such that $r\leq a,b.$
Then $a=r+x$ and $b=r+y$ for some $x,y\in M.$ Obviously, as $r>0,$ $x<a$ and 
$y<b.$ Thus $r+x+y<a+b$ yet $r+x+y\geq a,b$ contradicting the assumption
that $lub(a,b)=a+b$. Conversely suppose that $glb(a,b)=0$ and let there be,
by way of contradiction, $r$ such that $r\geq a,b$ yet $r<a+b$. Then $%
r=a+x=b+y$ and $a+b=r+z.$ Taking $r=a+x$ we have $a+b=a+x+z.$ Cancelling $a$
from both sides we get $b=x+z.$ Similarly substituting for $r=b+y$ and
cancelling $b$ from both sides we get $a=y+z.$ But then $z\leq a,b$ and
hence $z=0$ forcing $a=y$ and $b=x$ and $r=a+b$, a contradiction.
\end{proof}

If $glb(a,b)$ (resp., $lub(a,b))$ exists in a monoid $M$ we denote it by $%
a\wedge b$ (resp., $a\vee b)$

\begin{example}
\label{Example GX} (1) If $G$ is a Riesz group then as shown in \cite[%
Proposition 3.1]{YZ 2011}, $G^{+}$ is a pre-Riesz monoid. (2) Indeed $G$ is
a pre-Riesz group if and only if $G^{+}$ is a pre-Riesz monoid and indeed a
pre-Riesz group can be regarded as a pre-Riesz monoid. (3) Let $\ast $ be a
finite character star operation defined on a domain $D$ and let $\Gamma $ be
the set of all proper $\ast $-ideals of finite type of $D$. Then $\Gamma
\cup \{D\}$ is a pre-Riesz monoid under $\ast $-multiplication because $%
glb(A_{1},A_{2},...,A_{n})=D$ if and only if $(A_{1},A_{2},...,A_{n})^{\ast
}=D.$ Let's denote this monoid by $<\Gamma \cup \{D\},\ast ,D,\leq $ $>$ and
call it $\ast $-finite ideals monoid ($\ast $-FIM)
\end{example}

(This is because the $\ast $-product of finitely many members of $\Gamma $
is again of finite type and this $\ast $-product is associative. Here the
partial order is induced by reverse containment i.e. for $A,B\in \Gamma ,$ $%
A\leq B$ if and only if $A\supseteq B$ and of course $\ast $-multiplication
is compatible with the order, i.e., for $A,B,C\in \Gamma $ with $A\leq B$
then $(AC)^{\ast }\leq (BC)^{\ast }$ (since $A\supseteq B$ implies that $%
(AC)^{\ast }\supseteq (BC)^{\ast }).)$

Let's call $D$ $\ast $-coherent if for all $A,B\in \Gamma $ we have $A\cap
B\in \Gamma .$

\begin{proposition}
\label{Proposition HX} Let $<\Gamma \cup \{D\},\ast ,D,\leq $ $>$ be a $\ast 
$-finite monoid (1) For all $H,K\in \Gamma ,$ (We have $H\wedge K\in \Gamma $
as $(H,K)^{\ast }$ and if $H\cap K\in \Gamma ,$ then $H\vee K=H\cap K.$
\end{proposition}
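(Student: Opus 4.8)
The plan is to read off both the meet and the join directly from the order on the $\ast$-FIM $\langle \Gamma \cup \{D\}, \ast, D, \leq\rangle$, which is reverse containment: $A \leq B \Leftrightarrow A \supseteq B$. Under this order a greatest lower bound of a pair is the $\subseteq$-smallest $\ast$-ideal of finite type containing both members, while a least upper bound is the $\subseteq$-largest $\ast$-ideal of finite type contained in both. So each assertion reduces to identifying the relevant extremal ideal and checking universality.

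First I would handle the meet. Since $\ast$ has finite character and $H,K$ are of finite type, write $H = H_0^\ast$ and $K = K_0^\ast$ with $H_0, K_0 \in f(D)$; then $(H,K)^\ast = (H_0 + K_0)^\ast$ is the $\ast$-closure of a finitely generated ideal, hence of finite type, and it is either proper or equal to $D$, so $(H,K)^\ast \in \Gamma \cup \{D\}$. Because $(H,K)^\ast \supseteq H$ and $(H,K)^\ast \supseteq K$, it is a lower bound of $\{H,K\}$. If $A \in \Gamma \cup \{D\}$ is any lower bound then $A \supseteq H$ and $A \supseteq K$, so $A = A^\ast \supseteq (H+K)^\ast = (H,K)^\ast$, i.e. $A \leq (H,K)^\ast$. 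Hence $H \wedge K = (H,K)^\ast$, which lies in $\Gamma$ unless $H$ and $K$ are $\ast$-comaximal, in which case it equals $D$ (in agreement with $glb = D$ in a pre-Riesz monoid).

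Next, for the join, suppose $H \cap K \in \Gamma$. Then $H \cap K$ is a nonzero proper $\ast$-ideal of finite type --- it contains $HK \neq 0$ and is contained in $H \subsetneq D$ --- so it lies in the monoid, and from $H \cap K \subseteq H$ and $H \cap K \subseteq K$ we get $H \cap K \geq H$ and $H \cap K \geq K$, i.e. it is an upper bound of $\{H,K\}$. For any upper bound $A \in \Gamma \cup \{D\}$ we have $A \subseteq H$ and $A \subseteq K$, so $A \subseteq H \cap K$, i.e. $H \cap K \leq A$; thus $H \cap K$ is the least upper bound and $H \vee K = H \cap K$.

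I do not expect a real obstacle here: everything is a direct translation of the lattice operations through reverse containment. The only points needing care are that $(H,K)^\ast$ is genuinely of finite type --- this is exactly where the finite character of $\ast$ is used --- keeping the direction of the inequalities straight, and noting, as a caveat to the statement as written, that $H \wedge K = (H,K)^\ast$ need not be proper, so the accurate membership claim is $H \wedge K \in \Gamma \cup \{D\}$.
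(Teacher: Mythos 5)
Your proposal is correct and follows essentially the same route as the paper: under reverse containment, $(H,K)^{\ast}$ is a lower bound of $\{H,K\}$ and any $A\supseteq H,K$ with $A=A^{\ast}$ satisfies $A\supseteq (H,K)^{\ast}$, so it is the greatest lower bound, while $H\cap K$ (when it lies in $\Gamma$) is the least upper bound for the symmetric reason. Your added checks that $(H,K)^{\ast}$ is of finite type and your caveat that $H\wedge K$ may equal $D$ (so the accurate membership claim is $H\wedge K\in\Gamma\cup\{D\}$) are both correct refinements of the statement as printed.
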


\begin{proof}
Indeed as $(H,K)^{\ast }\leq H,K$ (sine $(H,K)^{\ast }\supseteq H,K)$ and if 
$A\leq H,K$ for some $A\in \Gamma ,$ (i.e., $A\supseteq H,K$) then $A\leq
(H,K)^{\ast }.$ Let's put it this way $(H,K)^{\ast }$ is standard for $\inf
(H,K)$ and $H\cap K,$ if it exists is standard for $\sup (H,K)$ in ideal
theory and so it is here.
\end{proof}

So, a $\ast $-finite monoid is actually a semilattice. Now let $M$ be a
pre-Riesz monoid and $H\in M.$ Call $H$ homogeneous if for all $0<R,S\leq H$
we have a $0<t<R,S.$ Obviously $0<K\in M$ is not homogeneous if and only if
there are $0<R,S<K$ such that $\inf (R,S)=0.$ Let's call $X,Y\in M$ disjoint
if $\inf (X,Y)=0$ and note that if $H$ is homogeneous then $H$ cannot be non
disjoint with two or more disjoint elements. Also if $X,Y$ are disjoint and $%
0\,<x<X$ then $x$ and $Y$ are disjoint, for if not then there is $0<r<x,Y,X$
making $X,Y$ non-disjoint.

Call a set $S$ of homogeneous elements of a pre-Riesz monoid $M$ an
independent set if every pair of elements of $S$ is disjoint. In notes of my
work with Yang and a student of his \cite{LYZ 2014}, other, restricted,
versions of the following were proved. As the notes are not made public yet
and there is a significant difference of the notions involved, I include
below some related results that are relevant to this write up.

\begin{proposition}
\label{Proposition JX} Let $S$ be an independent set of homogeneous
elements, in a pre-Riesz monoid, satisfying a property $(Q).$ Then $S$ can
be enlarged to a maximal independent set $T$ of homogeneous elements
satisfying $(Q).$
\end{proposition}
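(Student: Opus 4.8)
The plan is to run Zorn's Lemma on the family of all independent sets of homogeneous elements that contain $S$ and satisfy $(Q)$, ordered by inclusion; the only delicate point is that $(Q)$ must survive passage to unions of chains, and this is precisely the sort of step whose mishandling occasioned the corrigendum recorded above.

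First I would let $\mathcal{F}$ be the collection of all subsets $T$ of $M$ such that $S\subseteq T$, the set $T$ is an independent set of homogeneous elements, and $T$ satisfies $(Q)$; partially order $\mathcal{F}$ by inclusion. By hypothesis $S\in\mathcal{F}$, so $\mathcal{F}$ is nonempty. Next, let $\mathcal{C}=\{T_\lambda\}_{\lambda\in\Lambda}$ be a nonempty chain in $\mathcal{F}$ and put $T^{*}=\bigcup_{\lambda\in\Lambda}T_\lambda$. Plainly $S\subseteq T^{*}$ and every element of $T^{*}$ is homogeneous. For independence: if $x,y\in T^{*}$ are distinct, pick $\lambda,\mu$ with $x\in T_\lambda$ and $y\in T_\mu$; since $\mathcal{C}$ is a chain one of $T_\lambda,T_\mu$ contains the other, so $x$ and $y$ both lie in a single member of $\mathcal{C}$, which is independent, whence $\inf(x,y)=0$. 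Finally $T^{*}$ satisfies $(Q)$: in the applications $(Q)$ is a condition imposed on the individual elements of a set (membership in $\Gamma$, or a fixed relationship to a prescribed element), hence of finite character, so since every finite subset of $T^{*}$ already lies inside some $T_\lambda$ it inherits $(Q)$ from that $T_\lambda$, and therefore so does $T^{*}$. Thus $T^{*}\in\mathcal{F}$ and it bounds $\mathcal{C}$ above.

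By Zorn's Lemma $\mathcal{F}$ has a maximal element $T$; by construction $S\subseteq T$ and $T$ is an independent set of homogeneous elements satisfying $(Q)$. To see that $T$ is maximal among all such sets, and not merely within $\mathcal{F}$, observe that any independent set $T'\supseteq T$ of homogeneous elements satisfying $(Q)$ automatically contains $S$ and hence lies in $\mathcal{F}$, so maximality of $T$ in $\mathcal{F}$ gives $T'=T$. This yields the asserted enlargement.

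The hard part is the verification that $T^{*}$ still satisfies $(Q)$: if $(Q)$ were a genuinely global condition not preserved under directed unions, the Zorn argument would collapse --- exactly as the example $x=2^{2}5^{2}$ in $Z$ above shows that ``being a maximal set of mutually $\ast$-comaximal finite-type ideals'' is the wrong thing to maximize. Restricting attention to homogeneous elements is what keeps $\mathcal{F}$ well behaved, since a homogeneous element cannot be non-disjoint with two disjoint elements, and this is what makes the chain step go through.
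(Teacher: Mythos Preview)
Your proof is correct and follows essentially the same route as the paper: form the family of independent sets of homogeneous elements containing $S$ and satisfying $(Q)$, order by inclusion, check that the union of a chain stays in the family, and apply Zorn's Lemma. Your version is in fact more careful than the paper's, which tacitly treats $(Q)$ as an elementwise condition; your explicit remark that $(Q)$ must be of finite character (so that it passes to directed unions) is exactly the implicit assumption the paper is making, and your added observation that maximality in $\mathcal{F}$ coincides with maximality among all such sets is a harmless clarification.
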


\begin{proof}
Let $\Gamma =\{B|B\supseteq S$ is an independent set of homogeneous elements
satisfying $(Q)\}$. Obviously $\Gamma \neq \phi $. Now let $\{B_{\alpha }\}$
be a chain of members of $\Gamma $ and let $C=\cup B_{\alpha }.$ Then $%
C\supseteq S$ and for any pair $x,y\in C$, $x,y$ are in $B_{\alpha }$ for
some $\alpha $ so elements of $C$ are homogeneous, satisfy $(Q)$ and are
homogeneous. So, $C\in \Gamma .$ Thus by Zorn's Lemma $\Gamma $ must contain
a maximal element and that is our $T.$
\end{proof}

We shall call a set $S$ of mutually disjoint elements, of a monoid $M,$ a
maximal disjoint set if (as usual) no set $T$ exists of mutually disjoint
elements such that $M\supseteq T\supsetneq S$ and we shall call a set $S$ of
mutually disjoint elements of $M$ order maximal if no element $s$ of $S$ can
be replaced by two distinct predecessors to form a set $(S\backslash
\{s\})\cup \{x,y)$ of mutually disjoint elements. A maximal set of disjoint
homogeneous elements is obviously order maximal too, but a mere maximal set
of mutually disjoint elements may not be, as we have seen in the case of
ideals in a ring.

An order maximal independent set $B$ of homogeneous elements of a pre-Riesz
monoid $M$ is called a basis if $B$ is also an order maximal set of mutually
disjoint elements.

\begin{lemma}
\label{Lemma KX}. Let $M$ be a pre-Riesz monoid. Then a non-empty subset $S$
of $M$ is a basis if and only if $S$ is disjoint and ($S\backslash \{s\}$)$%
\cup \{x,y\}$ is non-disjoint for any $s\in S$ and for any $\{x,y\}\subseteq 
$ $(M\backslash S)\cup \{s\}$, with $x\neq y$.
\end{lemma}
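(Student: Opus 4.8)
The plan is to prove both directions by unwinding the definition of a basis: a basis of $M$ is a pairwise disjoint set of homogeneous elements that is maximal among sets of mutually disjoint elements (such a set being automatically order maximal by the remark preceding the statement). Throughout I will use two facts established just above: that $0<K\in M$ fails to be homogeneous precisely when $K$ dominates two disjoint strictly positive elements, and that if $X\perp Y$ and $0<z\le X$ then $z\perp Y$. I will also use, as the paper does implicitly, that two elements are non-disjoint exactly when they have a strictly positive common lower bound.

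For the forward direction, assume $S$ is a basis. That $S$ is disjoint is immediate. Fix $s\in S$ and distinct $x,y\in(M\setminus S)\cup\{s\}$. If one of them is $s$, say $y=s$, then $x\in M\setminus S$ and $(S\setminus\{s\})\cup\{x,y\}=S\cup\{x\}$, which is non-disjoint since $S$ is a maximal disjoint set. If instead $x,y\in M\setminus S$, suppose for contradiction that $(S\setminus\{s\})\cup\{x,y\}$ is disjoint; then $x$ and $y$ are each disjoint from every element of $S\setminus\{s\}$, and $x\perp y$. Since $x\notin S$ and $S$ is maximal disjoint, $x$ fails to be disjoint from some element of $S$, which can only be $s$; so there is $r_1$ with $0<r_1\le x$ and $r_1\le s$, and likewise $r_2$ with $0<r_2\le y$ and $r_2\le s$. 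Now $r_1,r_2$ are strictly positive and lie below the homogeneous element $s$, so there is $u$ with $0<u\le r_1,r_2$; then $u\le x$ and $u\le y$, contradicting $x\perp y$.

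For the converse, assume the right-hand condition holds; $S$ is then disjoint by hypothesis, and I must show its elements are homogeneous and that $S$ is maximal among disjoint sets. If some $s\in S$ were not homogeneous, $s$ would dominate two disjoint strictly positive elements $R,T<s$. Such $R,T$ cannot lie in $S$ (an element of $S$ strictly below another element of $S$ would be a strictly positive common lower bound of the two, contradicting disjointness of $S$), they are distinct, and — using $R,T\le s$, the disjointness $s\perp t$ for $t\in S\setminus\{s\}$, and the fact $z\le X\perp Y\Rightarrow z\perp Y$ — the set $(S\setminus\{s\})\cup\{R,T\}$ is disjoint, contradicting the hypothesis applied to $\{R,T\}\subseteq M\setminus S$. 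Hence every element of $S$ is homogeneous. Maximality is immediate: for $x\in M\setminus S$, pick any $s\in S$ (possible since $S\neq\emptyset$); the hypothesis gives that $(S\setminus\{s\})\cup\{x,s\}=S\cup\{x\}$ is non-disjoint, so no disjoint set properly contains $S$. Thus $S$ is a maximal disjoint set of homogeneous elements, i.e.\ a basis.

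The main obstacle is the second case of the forward direction: one must pull strictly positive common lower bounds $r_1$ of $x,s$ and $r_2$ of $y,s$ out of the maximality of $S$, and then invoke homogeneity of $s$ to fuse $r_1,r_2$ into a single strictly positive lower bound of $x$ and $y$, which collides with $x\perp y$. The only bookkeeping that needs care — in both directions — is verifying that the auxiliary elements produced (the $r_i$ and the split elements $R,T$) actually lie in $M\setminus S$, so that the hypothesis and the maximality condition are applicable to them.
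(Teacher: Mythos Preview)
Your proof is correct and follows essentially the same approach as the paper's: in both directions you use maximality of $S$ to force $x\wedge s\neq 0$ and $y\wedge s\neq 0$, invoke the pre-Riesz property to extract strictly positive common lower bounds, and then use homogeneity of $s$ to merge these into a lower bound of $x$ and $y$; the converse likewise matches the paper's argument line for line. The only cosmetic difference is that you split the forward direction into the cases ``one of $x,y$ equals $s$'' versus ``both lie in $M\setminus S$'', whereas the paper handles both cases at once (since $s\wedge s=s>0$ trivially), and you supply a bit more justification for why the auxiliary elements $R,T$ in the converse lie outside $S$---details the paper leaves implicit.
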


\begin{proof}
Let $S$ be a basis and suppose that for some $s\in S$, $(S\backslash
\{s\})\cup \{x,y\}$ is disjoint for some $\{x,y\}\subseteq $ $(M\backslash
S)\cup \{s\}$, with $x\neq y$. Then $x\wedge s\neq 0$ and $y\wedge s\neq 0$
because $S$ is a maximal set of disjoint elements of $M$. Since $M$ is
pre-Riesz, there is $t\in M$ such that $0<t\leq x,s$ and $u\in M$ such that $%
0<u\leq y,s$. Next as $s$ is homogeneous, there is $w\in M$ such that $%
0<w\leq t,u,x,y$, a contradiction. Conversely, suppose that $S$ is disjoint
and satisfies the condition in the lemma. If $S$ $\cup \{x\}$ is disjoint
for some $x\in M\backslash S$, then for any $s\in S$, $(S\backslash
\{s\})\cup \{s,x\}$ is disjoint and $s\neq x$, a contradiction. Therefore, $%
S $ is an maximal disjoint set. If $s\in S$ and $s$ is not homogeneous, then
there exists at least one pair of elements $0<x,y<s$ such that $x\wedge y=0$%
. But then $x,y\notin S,x\neq y$ and $(S\backslash \{s\})\cup \{x,y\}$ is
disjoint, a contradiction. Thus, $S$ is a maximal disjoint set consisting of
homogeneous elements, i.e., $S$ is a basis.
\end{proof}

\begin{theorem}
\label{Theorem LX} (cf \cite[Theorem 9]{LYZ 2014}) A pre-Riesz monoid $M$
has a basis if and only if (P): each $0<x\in M$ exceeds at least one
homogeneous element . Every basis of $M$ is an order maximal independent
subset and every order maximal independent subset of $M$ is a basis provided 
$M$ has a basis.
\end{theorem}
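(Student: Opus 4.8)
The plan is to derive the whole statement from the characterisation of a basis in Lemma \ref{Lemma KX}, using only two elementary observations together with one application of the pre-Riesz hypothesis. The observations are: (a) a predecessor of a homogeneous element is homogeneous — if $H$ is homogeneous and $0<h\le H$, then any $0<R,S\le h$ also satisfy $0<R,S\le H$, so homogeneity of $H$ yields $0<t\le R,S$; and (b) a predecessor of an element disjoint from $Y$ is again disjoint from $Y$ (already noted in the text). The pre-Riesz input I will use is the $n=2$ case of (pR'): if $u,v>0$ and $u\wedge v\neq 0$ then there is $0<w\le u,v$; combined with the definition of homogeneity this gives the \emph{pull-down} step: if $0<p,q\le s$ with $s$ homogeneous, then there is $0<w\le p,q$, hence $0<w\le u\wedge v$ whenever $p\le u$ and $q\le v$.

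First I would prove that $M$ has a basis if and only if (P). For (P)$\Rightarrow$(basis): (P) together with $M\neq\{0\}$ gives at least one homogeneous element, so the independent subsets of $M$ (pairwise disjoint sets of homogeneous elements) form a nonempty family, and the union of a chain of them is again independent; Zorn produces a maximal independent subset $B\neq\emptyset$. I claim $B$ satisfies the Lemma \ref{Lemma KX} condition. If not, there are $s\in B$ and distinct $x,y\in(M\setminus B)\cup\{s\}$ with $(B\setminus\{s\})\cup\{x,y\}$ disjoint. Using (P), choose a homogeneous $x'\le x$ and a homogeneous $y'\le y$, taking $x'=s$ (resp. $y'=s$) in the case $x=s$ (resp. $y=s$); since $x\neq y$, at most one such substitution occurs. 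By (b), $x'$ and $y'$ are disjoint from every member of $B\setminus\{s\}$ and from each other. If $x'\wedge s=0$, then $x'$ is disjoint from all of $B$, so $x'\notin B$ and $B\cup\{x'\}$ is an independent set strictly larger than $B$, a contradiction; hence $x'\wedge s\neq0$, and likewise $y'\wedge s\neq 0$. By pre-Riesz pick $0<p\le x',s$ and $0<q\le y',s$, then pull down to $0<w\le p,q$; now $0<w\le x'\wedge y'=0$, a contradiction. So $B$ is a basis. For (basis)$\Rightarrow$(P): by the proof of Lemma \ref{Lemma KX} a basis $B$ is a maximal disjoint set consisting of homogeneous elements. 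Given $0<x$, either $x\in B$ (then $x$ is homogeneous and $x\le x$), or $x\notin B$, in which case maximality of $B$ as a disjoint set gives $b\in B$ with $x\wedge b\neq0$; by pre-Riesz there is $0<r\le x,b$, and by (a) $r$ is homogeneous. Either way $x$ exceeds a homogeneous element, so (P) holds.

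Next I would establish that a basis and an order maximal independent subset are the same thing (the converse requiring that $M$ have a basis). A basis $B$ is, by the proof of Lemma \ref{Lemma KX}, a maximal disjoint set of homogeneous elements, and it is automatically order maximal in the replacement sense: a homogeneous $s$ cannot be written with two disjoint positive predecessors $u,v$, since homogeneity would give $0<t\le u,v$, contradicting $u\wedge v=0$. Hence every basis is an order maximal independent subset. Conversely, suppose $M$ has a basis, so (P) holds by the previous part, and let $B$ be an order maximal independent subset, i.e. a maximal disjoint set of homogeneous elements; it is nonempty because homogeneous elements exist. Checking the Lemma \ref{Lemma KX} condition for $B$ is now verbatim the argument from the (P)$\Rightarrow$(basis) step: a hypothetical splitting datum $s\in B$, $x,y\in(M\setminus B)\cup\{s\}$ is refined via (P) to homogeneous $x'\le x$, $y'\le y$; if one of them is disjoint from $s$ it is disjoint from all of $B$, contradicting maximality of $B$ as a disjoint set; otherwise the pull-down step contradicts $x'\wedge y'=0$. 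Thus $B$ is a basis.

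The content is light once Lemma \ref{Lemma KX} is in hand, so the real work is organisational, and I expect the main obstacles to be two. One is applying Zorn's lemma to independent sets (not merely disjoint sets) and verifying that a chain-union of independent sets is still independent. The other, pervasive, one is the case analysis on whether the homogeneous refinements $x',y'$ of $x,y$ are disjoint from the distinguished element $s$: the disjoint-from-$s$ branch is where inclusion-maximality of $B$ is used, while the non-disjoint branch is the only place where the pre-Riesz property and the homogeneity of $s$ are used together. The one trap to avoid is that $x$ or $y$ may already equal $s$, so (P) must not be applied to shrink it; setting $x'=s$ there — permissible because $x\neq y$ — keeps the argument uniform.
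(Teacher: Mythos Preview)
Your proof is correct, but for the direction (P)$\Rightarrow$(basis) you take a considerably longer route than the paper. Having produced a maximal independent subset $B$ by Zorn, the paper simply shows that $B$ is a \emph{maximal disjoint set}: if some $0<x$ were disjoint from every element of $B$, then by (P) a homogeneous $h\le x$ would also be disjoint from every element of $B$, and $B\cup\{h\}$ would be a strictly larger independent set, contradicting maximality. That is the entire argument --- one application of (P) and one use of observation (b). By contrast, you verify the Lemma~\ref{Lemma KX} condition directly, which forces you into the refinement to $x',y'$, the case split on whether $x'\wedge s=0$, and the ``pull-down'' step exploiting homogeneity of $s$. Your approach still works (and you handle the $x=s$ edge case carefully), but it re-proves content already packaged inside Lemma~\ref{Lemma KX}, whereas the paper uses the lemma only as the identification ``basis $=$ maximal disjoint set of homogeneous elements'' and then checks maximal disjointness in one line.

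For (basis)$\Rightarrow$(P) your argument is essentially identical to the paper's. For the second sentence of the theorem, the paper's proof leaves it implicit: the (P)$\Rightarrow$(basis) argument actually shows that \emph{every} maximal independent subset is a basis once (P) holds, which is exactly the converse claim; and the forward claim is immediate from the definition of basis. You spell this out explicitly, which is fine, but note that your final paragraph then just repeats your (P)$\Rightarrow$(basis) argument verbatim --- if you adopt the paper's shorter version of that argument, the repetition becomes a single sentence.
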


\begin{proof}
Let $S$ = $\{0<a_{\gamma }|\gamma \in \Gamma \}$ be a basis for $M$, and
consider $0<x\in M$. There exists $\gamma \in \Gamma $ such that $x\wedge
a_{\gamma }\neq 0$ for otherwise $S$ is not a maximal set of disjoint
elements. This means that there is $0<h\leq x,a_{\gamma }$ and $h$ is
homogeneous because $a_{\gamma }$ is homogeneous and $h\in (0,a_{\gamma }]$.
Thus, $M$ satisfies (P). Conversely, suppose that $M$ satisfies the property
(P). Since $M$ is non-trivial there is at least one homogeneous element and,
by Proposition \ref{Proposition JX}, there exists a maximal independent
subset $T=\{0<a_{\gamma }|\gamma \in \Gamma \}$ of $M$, assuming that $(Q)$
means \textquotedblleft no restriction". All we need show is that $T$ is a
maximal set of disjoint elements. Suppose on the contrary that there is an
element $0<x\in M\backslash T$ such that $x\wedge a_{\gamma }=0$ for all $%
\gamma \in \Gamma $. But then by the property (P), $x$ exceeds a homogeneous
element $h$, and $h$ is disjoint with $a_{\gamma }$ for all $\gamma \in
\Gamma .$ Therefore, $T\cup \{h\}\supsetneq T$and $T$ $\cup \{h\}$ is an
independent subset of $M$, but this is contrary to our choice of $T$.
\end{proof}

Conrad's F-condition on a pre-Riesz monoid reads thus: Each strictly
positive element $x$ in a pre-Riesz monoid $M$ is greater than at most a
finite number of (mutually) disjoint positive elements.

\begin{proposition}
\label{Proposition MX} If a pre-Riesz monoid $M$ satisfies Conrad's
F-condition, then $M$ has a basis.
\end{proposition}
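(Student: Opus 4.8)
The plan is to reduce everything to Theorem \ref{Theorem LX}, which already says that a pre-Riesz monoid $M$ has a basis if and only if it satisfies property (P): every $0<x\in M$ exceeds at least one homogeneous element. So it suffices to show that Conrad's F-condition forces (P).

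First I would fix $0<x\in M$ and argue by contradiction, assuming that $x$ exceeds no homogeneous element, i.e. that there is no homogeneous $h$ with $0<h\le x$. In particular $x$ itself is not homogeneous, so by the characterization of non-homogeneous elements recorded just after the definition (namely that $0<K$ fails to be homogeneous exactly when there are $0<R,S<K$ with $\inf(R,S)=0$) there exist $0<R_1,R_2<x$ with $R_1\wedge R_2=0$. This gives two mutually disjoint positive elements strictly below $x$.

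The key step is then an induction producing, for each $n\ge 2$, a set of $n$ mutually disjoint positive elements each strictly below $x$. Suppose $\{R_1,\dots,R_n\}$ is such a set. Every $R_i$ satisfies $0<R_i\le x$, so by the standing assumption none of the $R_i$ is homogeneous; pick $R_1$ and split it, obtaining $0<S_1,S_2<R_1$ with $S_1\wedge S_2=0$. Using the remark that if $X,Y$ are disjoint and $0<z<X$ then $z$ and $Y$ are disjoint, we get $S_1\wedge R_j=0=S_2\wedge R_j$ for $j\ge 2$; and transitivity of the strict order gives $S_1,S_2<x$. Hence $\{S_1,S_2,R_2,\dots,R_n\}$ is a set of $n+1$ mutually disjoint positive elements strictly below $x$ (they are pairwise distinct, since two disjoint positive elements cannot coincide). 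Letting $n$ grow, $x$ is then strictly greater than arbitrarily large finite sets of mutually disjoint positive elements, contradicting Conrad's F-condition. Therefore $x$ must exceed a homogeneous element, (P) holds, and Theorem \ref{Theorem LX} yields a basis of $M$.

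I do not anticipate a serious obstacle here; the delicate points are purely bookkeeping — ensuring the newly produced elements $S_1,S_2$ are genuinely \emph{strictly} below $x$ and genuinely disjoint from the untouched $R_j$ — and both are handled by transitivity of $<$ together with the downward inheritance of disjointness noted in the text. (The case where $x$ is already homogeneous needs no separate treatment: it is immediately absorbed by the opening contradiction hypothesis, since then $x$ exceeds the homogeneous element $x$.)
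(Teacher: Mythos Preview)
Your argument is essentially the paper's: reduce to property (P) via Theorem \ref{Theorem LX}, assume some $0<x$ exceeds no homogeneous element, and recursively split non-homogeneous elements below $x$ to manufacture large disjoint families. One small tightening: Conrad's F-condition says every disjoint family below $x$ is \emph{finite}, not that such families have bounded size, so ``arbitrarily large finite sets'' is not by itself a contradiction; but your construction already produces an infinite disjoint family --- at each stage all but one element survive to the next, so the elements that are never split again (your $R_2$, then $S_2$, and so on) form an infinite pairwise disjoint sequence below $x$, exactly as the paper records with its $y_i$.
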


\begin{proof}
Suppose that the condition holds but $M$ has no basis. Then by Theorem \ref%
{Theorem LX}, there is at least one $0<y\in M$ such that no homogeneous
element is contained in $\{x\in M:0<x\leq y\}$. Then there exist two
disjoint elements $x_{1},y_{1}$ with $0<x_{1},y_{1}<y$ where none of $%
x_{1},y_{1}$ exceeds a homogeneous element for otherwise $y$ would. So, say, 
$0<x_{2},y_{2}<x_{1}$ with $x_{2}\wedge y_{2}$ $=0$. Since $x_{1}\wedge
y_{1}=0$ and $y_{2}<x_{1}$ we have $y_{1}\wedge y_{2}=0$. Next $%
0<x_{3},y_{3}<x_{2}$ with $x_{3}\wedge y_{3}=0$. We can conclude that $%
y_{1},y_{2},y_{3}$ are mutually disjoint. Similarly producing $x_{i}$ s, $%
y_{i}$ s and using induction we can produce an infinite sequence $\{y_{i}\}$
of mutually disjoint elements less than $y$. Contradicting the assumption
that $M$ satisfies Conrad's F-condition.
\end{proof}

\begin{corollary}
\label{Corollary NX} The following are equivalent for a pre-Riesz monoid $M$%
: (i) $M$ satisfies Conrad's F-condition, (ii) Every strictly positive
element exceeds at least one and at most a finite number of homogeneous
elements that are mutually disjoint, (iii) $M$ contains a subset $\Gamma $
of strictly positive elements such that every strictly positive element of $%
M $ exceeds at least one member of $\Gamma $ and at most a finite number of
mutually disjoint members of $\Gamma .$
\end{corollary}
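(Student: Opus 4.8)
The plan is to prove the three conditions equivalent by the cycle $(i)\Rightarrow(ii)\Rightarrow(iii)\Rightarrow(i)$. The first implication will be a direct consequence of Proposition~\ref{Proposition MX} together with Theorem~\ref{Theorem LX}; the second is essentially a matter of unwinding definitions; and the third is a short contrapositive argument that manufactures an infinite pairwise-disjoint family inside $\Gamma$ out of one in $M$.

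For $(i)\Rightarrow(ii)$: assuming Conrad's F-condition, Proposition~\ref{Proposition MX} shows that $M$ has a basis, and then Theorem~\ref{Theorem LX} yields property (P), namely that every $0<x\in M$ exceeds at least one homogeneous element; this is the first half of $(ii)$. For the second half, note that a homogeneous element is in particular strictly positive, so the F-condition applied to $x$ already asserts that the pairwise-disjoint positive elements exceeded by $x$ — hence a fortiori the pairwise-disjoint homogeneous ones among them — form a finite set. For $(ii)\Rightarrow(iii)$: take $\Gamma$ to be the set of all homogeneous elements of $M$; it is a set of strictly positive elements, and with this choice $(iii)$ is literally the statement $(ii)$.

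For $(iii)\Rightarrow(i)$: suppose $\Gamma$ witnesses $(iii)$ but, toward a contradiction, some $0<x\in M$ exceeds an infinite family $\{z_i\}_{i\ge 1}$ of pairwise-disjoint strictly positive elements. Applying the first clause of $(iii)$ to each $z_i$ produces $g_i\in\Gamma$ with $0<g_i\le z_i$. By the observation recorded in the text (a positive element lying below one of two disjoint elements is disjoint from the other, applied twice), the $g_i$ are again pairwise disjoint; moreover they are pairwise distinct, since $g_i=g_j$ with $i\ne j$ would force $0<g_i\le z_i\wedge z_j=0$. Thus $\{g_i\}_{i\ge 1}$ is an infinite set of pairwise-disjoint members of $\Gamma$, all exceeded by $x$ (as $g_i\le z_i\le x$), contradicting the second clause of $(iii)$. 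Hence $M$ satisfies Conrad's F-condition, completing the cycle.

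The one step requiring genuine care is $(iii)\Rightarrow(i)$, and within it the single point that passing from the disjoint family $\{z_i\}$ to the family $\{g_i\}\subseteq\Gamma$ preserves both pairwise disjointness and (crucially) the absence of repetitions, so that the new family is still infinite; everything else reduces immediately to Proposition~\ref{Proposition MX}, Theorem~\ref{Theorem LX}, and the definitions of homogeneous and disjoint elements.
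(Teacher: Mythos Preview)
Your proof is correct and follows essentially the same route as the paper's own argument: the paper also derives the ``at least one homogeneous element'' clause of (ii) from Proposition~\ref{Proposition MX} (implicitly via Theorem~\ref{Theorem LX}), takes $\Gamma$ to be the set of homogeneous elements for (ii)$\Rightarrow$(iii), and proves (iii)$\Rightarrow$(i) by the same contrapositive construction of an infinite disjoint family inside $\Gamma$. The only minor differences are that the paper organizes the implications as (i)$\Leftrightarrow$(ii) and (ii)$\Rightarrow$(iii)$\Rightarrow$(i) rather than a single cycle, and for the ``at most finitely many'' half of (ii) it invokes Proposition~\ref{Proposition JX} to build a maximal independent set, whereas you observe more directly that the F-condition already bounds any disjoint family; your attention to the distinctness of the $g_i$ in (iii)$\Rightarrow$(i) is a detail the paper leaves implicit.
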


\begin{proof}
(i) $\Rightarrow $ (ii) Conrad's F-condition, via Proposition \ref%
{Proposition MX}, implies that every strictly positive element $x$ exceeds
at least one homogeneous element say $h.$ The set $\{h\}$ is an independent
set of ($(Q)$)homogeneous elements preceding $x$ and by Proposition \ref%
{Proposition JX}, $\{h\}$ can be expanded to a maximal independent set $T$
of elements preceding $x.$ But again by Conrad's F-condition, $T$ must be
finite. For (ii) )$\Rightarrow $(i), suppose that (ii) holds yet $M$ does
not satisfy (i). Then there is $0<x\in M$ that exceeds an infinite sequence $%
\{x_{i}\}$ of mutually disjoint strictly positive elements of $M$. Now each
of $x_{i}$ exceeds at least one homogeneous element $h_{i}$. Since $%
\{x_{i}\} $ are mutually disjoint, $\{h_{i}\}$ are mutually disjoint, which
causes a contradiction. Whence, we have the conclusion. (ii)$\Rightarrow $
(iii) Take $\Gamma =\{h|h$ is a homogeneous element of $M\}$, then every
positive element exceeds at least one member of $\Gamma $ and at most a
finite number. (iii)$\Rightarrow $ (i) Suppose that the given condition
holds but Conrad's F-condition doesn't. That means there is some element $%
y>0 $ such that $y$ is greater than an infinite number of mutually disjoint
elements $\{y_{\alpha }\}$ of $M.$ By (iii) each $y_{\alpha }$ exceeds a
member $z_{\alpha }$ of $\Gamma .$ As $y_{\alpha }$ are mutually disjoint,
making $y$ exceed an infinite number of mutually disjoint members of $\Gamma
,$ a contradiction.
\end{proof}

\begin{corollary}
\label{Corollary PX} (Corollary to Corollary \ref{Corollary NX}) Let $D$ be
an integral domain, $\ast $ a finite character star operation on $D$ and let 
$\Gamma $ be a set of proper, nonzero, $\ast $-ideals of finite type of $D$
such that every proper nonzero finite type $\ast $-ideal of $D$ is contained
in some member of $\Gamma $ . Then $D$ is of finite $\ast $-character if and
only if every nonzero finitely generated ideal $I$ of $D$ with $I^{\ast
}\neq D$ is contained in at least one and at most a finite number of
mutually $\ast $-comaximal members of $\Gamma .$
\end{corollary}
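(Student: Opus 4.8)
The plan is to obtain Corollary \ref{Corollary PX} by reading Corollary \ref{Corollary NX} inside the $\ast$-finite ideals monoid. Let $\Omega$ denote the set of all proper nonzero $\ast$-ideals of finite type of $D$, and let $M=\langle\Omega\cup\{D\},\ast,D,\leq\,\rangle$ be the $\ast$-FIM of Example \ref{Example GX}(3); thus $M$ is a pre-Riesz monoid with zero element $D$, and $\Gamma\subseteq\Omega$ is a set of strictly positive elements of $M$. First I would set up the translation between $M$ and $D$: the strictly positive elements of $M$ are exactly the proper nonzero finite-type $\ast$-ideals, and, since $\ast$ has finite character, each such ideal has the form $I^{\ast}$ for some nonzero finitely generated $I$; two strictly positive elements $A,B$ are disjoint in $M$, i.e.\ $A\wedge B=(A+B)^{\ast}=D$ (see Proposition \ref{Proposition HX}), precisely when they are $\ast$-comaximal; ``$x$ exceeds $y$'' ($y\leq x$, i.e.\ $y\supseteq x$) means ``$x$ is contained in $y$''; an element $H$ of $M$ is homogeneous in the pre-Riesz sense exactly when $H$ is a $\ast$-homogeneous ideal of $D$ --- this is Lemma \ref{Lemma X4}, since ``$H$ homogeneous in $M$'' unwinds to ``$(R+S)^{\ast}\neq D$ for all proper finite-type $\ast$-ideals $R,S\supseteq H$'' (the required witness may always be chosen to be $(R+S)^{\ast}$); and, finally, Conrad's F-condition on $M$ says that every proper nonzero finite-type $\ast$-ideal is contained in at most finitely many mutually $\ast$-comaximal such ideals, which by Theorem \ref{Theorem X6} is exactly the assertion that $D$ is of finite $\ast$-character.

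With the dictionary in place, the standing hypothesis on $\Gamma$ --- every proper nonzero finite-type $\ast$-ideal is contained in some member of $\Gamma$ --- says precisely that every strictly positive element of $M$ exceeds at least one member of $\Gamma$. For the forward implication, if $D$ is of finite $\ast$-character then $M$ satisfies Conrad's F-condition by Theorem \ref{Theorem X6}, so for every nonzero finitely generated $I$ with $I^{\ast}\neq D$ the element $I^{\ast}$ of $M$ lies in only finitely many mutually $\ast$-comaximal proper finite-type $\ast$-ideals --- a fortiori in only finitely many mutually $\ast$-comaximal members of $\Gamma$ --- while it lies in at least one by the hypothesis on $\Gamma$. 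For the converse, suppose every nonzero finitely generated $I$ with $I^{\ast}\neq D$ is contained in at least one and at most finitely many mutually $\ast$-comaximal members of $\Gamma$; since every strictly positive element of $M$ has the form $I^{\ast}$ with $I$ finitely generated, this is exactly condition (iii) of Corollary \ref{Corollary NX} applied to $M$ with $\Gamma$ as the distinguished subset. Hence $M$ satisfies Conrad's F-condition by Corollary \ref{Corollary NX}, and therefore $D$ is of finite $\ast$-character by Theorem \ref{Theorem X6}.

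The logic on top is routine; the step that needs care is the dictionary itself --- in particular verifying that ``homogeneous element of $M$'' really coincides with ``$\ast$-homogeneous ideal of $D$'' (this rests on Lemma \ref{Lemma X4} together with the fact that the witness $t$ in the monoid definition can be taken to be $(R+S)^{\ast}$), and that passing from arbitrary proper nonzero finite-type $\ast$-ideals to those of the form $I^{\ast}$ with $I$ finitely generated costs nothing, which is the finite-character property of $\ast$ (if $A=A^{\ast}$ is of finite type then $A=B^{\ast}$ for some finitely generated $B\subseteq A$). As an aside, one can also argue directly from Theorem \ref{Theorem X6} without mentioning $M$: if $D$ is not of finite $\ast$-character, Theorem \ref{Theorem X6} supplies a finite-type $\ast$-ideal $A=I^{\ast}$ (with $I$ nonzero finitely generated, $I^{\ast}\neq D$) contained in infinitely many mutually $\ast$-comaximal finite-type $\ast$-ideals $A_{\alpha}$; choosing $B_{\alpha}\in\Gamma$ with $A_{\alpha}\subseteq B_{\alpha}$, the $B_{\alpha}$ are pairwise $\ast$-comaximal (hence pairwise distinct) members of $\Gamma$ all containing $I$, contradicting the hypothesis.
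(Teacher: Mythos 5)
Your proof is correct, but it takes a genuinely different route from the one in the paper. The paper's own proof of Corollary \ref{Corollary PX} does not simply quote Theorem \ref{Theorem X6}: after invoking Corollary \ref{Corollary NX} to get a finite maximal independent set $T=\{H_{1},\dots,H_{n}\}$ of homogeneous ideals containing $A^{\ast}$, it re-runs the counting argument, showing directly that the associated maximal $\ast$-ideals $M(H_{1}),\dots,M(H_{n})$ are \emph{all} the maximal $\ast$-ideals containing $A$ (if $N$ were another one, pick $x\in N\setminus\bigcup M(H_{i})$ and derive a homogeneous ideal above $(x,A)^{\ast}$ disjoint from every $H_{i}$, contradicting maximality of $T$). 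You instead treat Theorem \ref{Theorem X6} as a black box equivalence between finite $\ast$-character and Conrad's F-condition on the $\ast$-FIM, and reduce the corollary to the implication (iii)$\Rightarrow$(i) of Corollary \ref{Corollary NX} (or, in your aside, to Remark \ref{Remark X5} alone: lift infinitely many mutually $\ast$-comaximal finite-type $\ast$-ideals containing $I$ to infinitely many mutually $\ast$-comaximal members of $\Gamma$). This is shorter and cleanly separates the hard work (already done in Theorem \ref{Theorem X6}) from the easy bridging between ``all finite-type $\ast$-ideals'' and ``members of $\Gamma$''; what it gives up is the explicit identification of the maximal $\ast$-ideals containing $A$ with the $M(H_{i})$, which the paper's argument produces as a byproduct. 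Your dictionary entries are sound (in particular the observation that the witness in the monoid-homogeneity condition can be taken to be $(R+S)^{\ast}$, matching Lemma \ref{Lemma X4}, and that finite character of $\ast$ lets you replace an arbitrary finite-type $\ast$-ideal by $I^{\ast}$ with $I$ finitely generated), though note that the homogeneity dictionary is not actually needed on your chosen path, since neither (i) nor (iii) of Corollary \ref{Corollary NX} mentions homogeneous elements.
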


\begin{proof}
We know that $M=\{A^{\ast }|$ $A^{\ast }\neq D$ is a $\ast $-ideal of finite
type of $D\}\cup \{D\}$ is a pre-Riesz monoid under $\ast $-multiplication
and the set $\Gamma $ can just be regarded as a subset of $M$ and the
theorem requires every strictly positive member of $M$ exceeds at least one
member of $\Gamma $ and at most a finite number of mutually disjoint members
of $\Gamma .$ Now this means, according to Corollary \ref{Corollary NX},
that every element $A^{\ast }$ exceeds at least one basic element and at
most a finite number of basic elements of $M.$ Now take an element $A^{\ast
} $ in $M$ and let $h$ be a basic element of $\Gamma $ containing $A^{\ast
}. $ Then, by Proposition \ref{Proposition JX}, there is at least one
maximal set $S$ of mutually disjoint basic elements containing $A^{\ast }$
and each $h\in S$ exceeds some member of $\Gamma $ giving a maximal set $T$
of basic elements in $\Gamma $ and containing $A^{\ast }$ Now this
translates to: If the condition is satisfied, then or every $\ast $-ideal of
finite type $A$ there is a maximal set $T$ of homogeneous $\ast $-ideals
containing $A$ and by the condition, $T$ is finite. Now let $|T|=n$ and
recall that if $T=\{H_{1},...,H_{n}\}$ then each of the $H_{i}$ determines a
unique maximal $\ast $-ideal $M(H_{i}).$ To show that $T^{\prime
}=\{M(H_{1}),...,M(H_{n})\}$ contains all the maximal $\ast $-ideals
containing $A^{\ast }$ assume that there is a maximal $\ast $-ideal $N\notin
T^{\prime }$ and containing $A^{\ast }.$ Then there is $x\in N\backslash
(\cup M(H_{i}).$ But then $xD$ is $\ast $-comaximal with $H_{i}$ for each $i$
and hence $(x,A)^{\ast }\subseteq N$ is $\ast $-comaximal with each $H_{i}$
which translates to: $(x,A)^{\ast }$ is disjoint with each basic element $%
H_{i}.$ But then $(x,A)^{\ast }$ exceeds a basic element $K$ which must be
disjoint with each of $H_{i}$, killing the maximality of $T.$ The converse
is obvious because if there is an infinite number of mutually $\ast $%
-comaximal members of $\Gamma $ then $D$ cannot be of finite $\ast $%
-character because a maximal $\ast $-ideal cannot afford mutually $\ast $%
-comaximal ideals.
\end{proof}

Finally, it's important to mention that not all p.o. monoids are pre-Riesz
monoids. According to Proposition 4.2 of \cite{YZ 2011} The group of
divisibility $G(D)$ of a domain $D$ is pre-Riesz if and only if (P): for all 
$x_{1},x_{2},...,x_{n}\in D\backslash \{0\},$ $(x_{1},x_{2},...,x_{n})_{v}=D$
or $(x_{1},x_{2},...,x_{n})\subseteq rD$ for some non unit $r\in D.$ As we
can readily see, a domain satisfying (P) above is a domain satisfying the
PSP property and in a PSP domain every atom is a prime. Thus an atomic
domain (every nonzero non unit is expressible as a product of atoms) with
PSP property is a UFD. Thus, say, if $D$ is a non UFD Noetherian domain then 
$G(D)$ is not pre-Riesz. It may be noted that the set of principal ideals is
under multiplication is a submonoid of $\Gamma \cup \{D\}.$

\subsection{Riesz monoids}

First off let's note that when we say "monoid" we mean a commutative monoid.
Now call a directed p.o. monoid $M=<M,+,0,\leq $ $>$ a sub-Riesz monoid, if
every element $x$ of $M$ is primal i.e. for $y_{1},y_{2}\in M,$ $x\leq
y_{1}+y_{2}$ $\Rightarrow x=x_{1}+x_{2}$ such that $x_{i}\leq y_{i}$ and a
Riesz monoid if $M$ is also divisibility and cancellative.

One may ask whether Riesz monoids satisfy the Riesz interpolation, as do
Riesz groups. The answer is yes and can be readily checked as we show below.
Note that by $M^{+}$ we mean the set $\{x\in M|x\geq 0\}$

\begin{theorem}
\bigskip \label{Theorem QX}TFAE for a commutative cancellation divisibility
monoid $M$. (1) Every $0\leq x\in M$ is primal (2) For all $a,b,x,y\in M^{+}$
with $a,b\leq $ $x,y$ there is $z$ such that $a,b\leq z\leq x,y,$ (3) For
all $a,b,x_{1},x_{2},...,x_{n}$ $\in M^{+}$ with $a,b\leq $ $%
x_{1},x_{2},...,x_{n}$ there exists $z$ such that $a,b\leq z\leq
x_{1},x_{2},...,x_{n}$, (4) For all $%
a_{1},a_{2},...,a_{n},b_{1},b_{2},...,b_{m}\in M^{+}$ with $%
a_{1},a_{2},...,a_{n}\leq $ $b_{1},b_{2},...,b_{m}$ there exists $d$ such
that $a_{1},a_{2},...,a_{n}\leq d\leq b_{1},b_{2},...,b_{m}.$
\end{theorem}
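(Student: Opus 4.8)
The plan is to run the cycle $(1)\Rightarrow(2)\Rightarrow(3)\Rightarrow(4)\Rightarrow(1)$. Two of the links cost nothing: $(4)\Rightarrow(3)$ is the special case of two lower terms and $(3)\Rightarrow(2)$ is the special case of two upper terms. So only three things need real arguments: the equivalence of primality $(1)$ with binary interpolation $(2)$, and the bootstrap from $(2)$ to the $n$-ary forms $(3)$ and $(4)$. Note also that in a divisibility monoid $0$ is the least element, so $M^{+}=M$ throughout.

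For $(1)\Leftrightarrow(2)$ I would pass to the group of differences $G=M-M$. Since $M$ is cancellative it embeds in $G$; and since $M$ is a \emph{divisibility} monoid with least element $0$ and a genuine (antisymmetric) order, $a+b=0$ forces $a=b=0$, hence $M\cap(-M)=\{0\}$ and $G$ is a partially ordered abelian group whose positive cone is exactly the image of $M$ and whose order restricts on $M$ to the monoid order; moreover $G$ is directed ($a+b$ bounds $a,b$ above, so $G$ is directed up and, being a group, also down). Under this dictionary, $(1)$ is exactly Riesz decomposition for $G$ and $(2)$ is binary Riesz interpolation for $G$ (a translation by a common lower bound, available by directedness, converts the "positive configurations only" form $(2)$ into interpolation for arbitrary elements of $G$, and conversely interpolants of positive configurations are automatically positive). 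So $(1)\Leftrightarrow(2)$ reduces to the classical equivalence of Riesz decomposition and Riesz interpolation in a directed partially ordered abelian group, and I would include the two short standard arguments: from interpolation, given $0\le x\le a+b$ with $a,b\ge0$ one has $x-b\le a,x$ and $0\le a,x$, so an interpolant $z$ between $\{0,x-b\}$ and $\{a,x\}$ gives the decomposition $x=z+(x-z)$; conversely, from decomposition, given $a_{1},a_{2}\le b_{1},b_{2}$ one applies decomposition to $0\le b_{1}-a_{1}\le(b_{1}-a_{2})+(b_{2}-a_{1})$ (all three elements are $\ge 0$), obtaining $b_{1}-a_{1}=u+v$ with $0\le u\le b_{1}-a_{2}$, $0\le v\le b_{2}-a_{1}$, and then $z=a_{1}+v=b_{1}-u$ interpolates.

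For $(2)\Rightarrow(3)$ I would induct on the number $n$ of upper terms: $n\le 2$ is $(2)$ (for $n=1$ take $z=x_{1}$), and for $n\ge 3$ the inductive hypothesis gives $z'$ with $a,b\le z'\le x_{1},\dots,x_{n-1}$, whereupon $(2)$ applied to the configuration $a,b\le z',x_{n}$ produces the required $z$. For $(3)\Rightarrow(4)$ I would induct on the number $n$ of lower terms: $n\le 2$ is $(3)$ (for $n=1$ take $d=a_{1}$), and for $n\ge 3$ the inductive hypothesis gives $c$ with $a_{1},\dots,a_{n-1}\le c\le b_{1},\dots,b_{m}$, and then $(3)$ applied to the two lower bounds $c,a_{n}$ against $b_{1},\dots,b_{m}$ yields $d$ with $a_{1},\dots,a_{n}\le d\le b_{1},\dots,b_{m}$.

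The main obstacle is the bookkeeping around the group of differences: one must verify carefully that the order the theorem \emph{gives} on $M$ (a priori only required to be compatible with $+$) actually coincides with the order induced by the positive cone $M$ inside $G$ — this is precisely where the hypothesis ``divisibility'' is used — and that $G$ is directed, so that the back-and-forth translation between statements about $M$ and statements about all of $G$ is legitimate. Everything after that is the routine Riesz-theory manipulation sketched above; one could instead argue $(1)\Rightarrow(2)$ entirely inside $M$ by way of the Riesz refinement property, but passing to $G$ is considerably more transparent and is the route I would present.
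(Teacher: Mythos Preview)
Your proof is correct, and the induction steps $(2)\Rightarrow(3)$ and $(3)\Rightarrow(4)$ match the paper's essentially verbatim. The real difference is in $(1)\Leftrightarrow(2)$.

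The paper argues $(1)\Rightarrow(2)$ and $(2)\Rightarrow(1)$ entirely inside $M$, never leaving the monoid. For $(1)\Rightarrow(2)$ it starts from $a,b\le x,y$, writes $x=x_{1}+a=x_{2}+b$, $y=y_{1}+a=y_{2}+b$, applies primality of $b$ to $b\le x_{1}+a$, then primality of a resulting piece $b_{1}$ to an equation coming from the $y$-side, and after a page of substitutions and cancellations produces an explicit interpolant of the form $a_{1}'+b$. The converse $(2)\Rightarrow(1)$ is likewise done by hand: from $a\le b+c$ one interpolates $a,b\le b+c,\,a+b$ and reads off a splitting of $a$ from the interpolant. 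No group of differences ever appears.

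Your route---embed $M$ in $G=M-M$, check that the divisibility hypothesis makes $M$ conic so that $M$ is exactly $G^{+}$, extend $(2)$ from $M^{+}$ to $G$ by translation, and then quote the classical equivalence of Riesz decomposition and Riesz interpolation in a directed p.o.\ abelian group---is shorter and more conceptual. It also dovetails nicely with what the paper does \emph{afterwards}: its Theorem~\ref{Theorem SX} (the Bergman argument) shows that a conic cancellative primal monoid generates a Riesz group, which is essentially your observation packaged as a separate result. What the paper's intrinsic argument buys is that one sees the interpolant built concretely from repeated primal splittings without ever invoking subtraction; what your argument buys is transparency and brevity, at the cost of the small overhead of verifying that the passage to $G$ is order-faithful (which you correctly flag as the one place the ``divisibility'' hypothesis is really used).
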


\begin{proof}
(1) $\Rightarrow $ (2) Let every positive element of $M$ be primal.

Let $a,b\leq x,y.$ Then $x=x_{1}+a=x_{2}+b$ and $y=y_{1}+a=y_{2}+b......(1)$

Since $x_{1}+a=x_{2}+b,$ $b\leq x_{1}+a$ and since $b$ is primal $%
b=b_{1}+b_{2}$ where $b_{1}\leq x_{1}$ and $b_{2}\leq a.$ (2)

Let $x_{1}=x_{1}^{\prime }+b_{1}$ and $a=a_{1}+b_{2}.$ Then $x_{1}+a=x_{2}+b$
can be written as $x_{1}^{\prime }+b_{1}+a_{1}+b_{2}=x_{2}+b$, or $%
x_{1}^{\prime }+a_{1}+b_{1}+b_{2}=x_{2}+b.$ Noting that $b=b_{1}+b_{2}$ and
cancelling $b$ from both sides we get $x_{1}^{\prime }+a_{1}=x_{2}.$
......(3)

Since $a_{1}+b_{2}=a$ we have $a,b\leq a_{1}+b........(4)$

Using the value of $x_{2}$ we have $a_{1}+b\leq x.$ (Note: $%
x=x_{2}+b=(x_{1}^{\prime }+a_{1})+b)$ ... (5)

Now consider $y_{1}+a=y_{2}+b.$ Using $a=a_{1}+b_{2}$ and $b=b_{1}+b_{2}$ we
have $y_{1}+a_{1}+b_{2}=y_{2}+b_{1}+b_{2}.$ Cancelling $b_{2}$ from both
sides we get $y_{1}+a_{1}=y_{2}+b_{1}.$ So that $b_{1}\leq y_{1}+a_{1}$ and
as $b_{1}$ is primal we have $b_{1}=b_{3}+b_{4}$ where $b_{3}\leq y_{1}$ and 
$b_{4}\leq a_{1}.$ Writing $y_{1}=y_{1}^{\prime }+b_{3}$ and $%
a_{1}=a_{1}^{\prime }+b_{4}$ we can express $y_{1}+a_{1}=y_{2}+b_{1}$ as $%
y_{1}^{\prime }+b_{3}+a_{1}^{\prime }+b_{4}=y_{2}+b_{1}.$ Cancelling $%
b_{1}=b_{3}+b_{4}$ from both sides we get $y_{2}=y_{1}^{\prime
}+a_{1}^{\prime }.$ This gives $y=y_{2}+b=y_{1}^{\prime }+a_{1}^{\prime
}+b=y_{1}+a.$ Now as $y_{1}^{\prime }\leq y_{1}$ we get $y_{1}=y_{4}+y_{1}^{%
\prime }$ which on substituting in $y_{1}^{\prime }+a_{1}^{\prime
}+b=y_{1}+a $ gives $y_{1}^{\prime }+a_{1}^{\prime }+b=y_{4}+y_{1}^{\prime
}+a$ and cancelling $y_{1}^{\prime }$ we get $y_{4}+a=a_{1}^{\prime }+b$ and
so $a\leq a_{1}^{\prime }+b.$ That is $a,b\leq a_{1}^{\prime }+b$ and $%
a_{1}^{\prime }+b\leq y.$ But as $a_{1}^{\prime }\leq a_{1}$ and $%
x_{2}=x_{1}^{\prime }+a_{1}$ we have $a_{1}^{\prime }+b\leq x_{2}+b=x.$ So
we have $z=a_{1}^{\prime }+b$ such that $a,b\leq z\leq x,y.$

(2) $\Rightarrow $ (1). Let $a\leq b+c$.

Then as $a,b\leq b+c,$ $a+b$ there is $x$ such that $a,b\leq x\leq b+c,$ $%
a+b $ ..........(i)

Now as $a\leq x$ we have $x=x_{1}+a$ ..........(ii)

and as $b\leq x$ we have $x=x_{2}+b.......$...(iii)

Using (i) and (iii) $x_{2}\leq a$ and $x_{2}\leq c.$ Now as $x_{2}\leq a,$
setting $a=x_{3}+x_{2}$ we have from $x_{1}+a=x_{2}+b$, the equation $%
b=x_{1}+x_{3}.$ So $a\leq b+c$ implies that $a=x_{2}a+x_{3},$ with $%
x_{2},x_{3}\in M^{+}$ such that $x_{3}\leq b$ and $x_{2}\leq c.$

(2) $\Rightarrow $ (3). Let $a,b\leq x_{1},x_{2},...,x_{n}.$ If $n=2$ we
have the result by (2). So suppose that $n>2$ and suppose that for all $%
x_{1},x_{2},...,x_{n-1}$ the statement is true. Then for $a,b\leq
x_{1},x_{2},...,x_{n-1}$ there is a $d_{1}$ such that $a,b\leq d_{1}\leq
x_{1},x_{2},...,x_{n-1}.$ But then for $d_{1},x_{n}$ there is $d$ with $%
a,b\leq d\leq d_{1},x_{n}.$ But this $d$ satisfies $a,b\leq d\leq
x_{1},x_{2},...,x_{n}.$

(3) $\Rightarrow $ (4). Let $a_{1},a_{2},...,a_{n}\leq
b_{1},b_{2},...,b_{m}. $ Then $a_{1},a_{2}\leq b_{1},b_{2},...,b_{m}$ and so
there is a $d_{1}$ such that $a_{1},a_{2}\leq d_{1}\leq
b_{1},b_{2},...,b_{m}.$ Now $d_{1},a_{3},...,a_{n}\leq b_{1},b_{2},...,b_{m}$
and induction on $n$ completes the job. (4) $\Rightarrow $ (2). Obvious
because (2) is a special case of (4).
\end{proof}

Part (2) of Theorrem \ref{Theorem QX} is also called $(2,2)$ Riesz
interpolation Property and (4) is $(n,m)$ interpolaion for positive integral 
$n$ and $m$.

Call a subset $S$ of a monoid $M$ conic if $x+y=0$ implies $x=0=y,$ for all $%
x,y\in S.$ In a p.o. group $G$ the sets $G^{+}$ and $-G^{+}$ are conic. If $%
D $ is an integral domain then the set $m(D)$ of nonzero principal ideals of 
$D $ is a monoid under multiplication, with identity $D,$ ordered by $aD\leq
bD$ $\Leftrightarrow $ there is $c\in D$ such that $bD=acD\Leftrightarrow
aD\supseteq bD.$ The monoid $m(D)$ is cancellative too and in $m(D)$ $%
xDyD=1\Rightarrow xD=yD=1.$ So, $m(D)$ is a divisibility cancellative conic
monoid. The monoid $m(D)$ is of interest because of the manner it generates
a group. We know how the field of quotients of a domain is formed as a set
of ordered pairs eah pair representing an equivalence class with $%
(a,b)=(c,d) $ $\Leftrightarrow da=bc$ and then we represent the pair $(a,b),$
$b\in D\backslash \{0\}$ by $\frac{a}{b}=ab^{-1}.$ Now the group of $m(D)$
gets the form $G(D)=\{\frac{a}{b}D|\frac{a}{b}\in qf(D)\backslash \{0\}\},$
ordered by $\frac{a}{b}D\leq \frac{c}{d}D$ $\Leftrightarrow \frac{a}{b}%
D\supseteq \frac{c}{d}D\Leftrightarrow $ there is $hD\in m(D)$ such that $%
\frac{a}{b}DhD=\frac{a}{b}D,$ so that $m(D)$ is the positive cone of $G(D).$
The group $G(D)$ gets the name group of divisibility of $D$ (actually of $%
m(D)).$ Now any divisibility monoid that is also a cancellative and conic
monoid $M,$ with least element $0$ can be put through a similar process of
forming equivalent classes of ordered pairs to get group of divisibility
like group $G(M)=\{a-b|a,b\in M\}$ with $x\leq y$ in $G(M)$ $\Leftrightarrow
x+h=y$ for some $h\in M.$

\begin{corollary}
\label{Corollary RX}A Riesz Monoid $M$ has the pre-Riesz property. Also $%
M^{+}$ is conic for a Riesz monoid $M.$
\end{corollary}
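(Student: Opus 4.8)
The plan is to verify directly that a Riesz monoid $M$ satisfies the definition of a pre-Riesz monoid and that $M^{+}$ is conic; the single observation that drives everything is that in a divisibility monoid $0$ is the least element. Indeed, for any $x\in M$ the equation $0+x=x$ exhibits the witness required by the divisibility order, so $0\leq x$ for all $x$; in particular $M=M^{+}$ and $0$ is the bottom of $M$. It is also worth recording that $M$ is automatically upper directed, since $a,b\leq a+b$ for all $a,b$ (with witnesses $b$ and $a$), so the ``upper directed'' half of the pre-Riesz definition needs no further work.

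For the conic assertion I would argue as follows. Let $x,y\in M^{+}$ with $x+y=0$. By divisibility, $x\leq x+y=0$ (witness $y$), and also $0\leq x$; since $\leq$ is a genuine partial order, antisymmetry forces $x=0$, whence $y=0+y=x+y=0$. Thus $M^{+}$ is conic. Note this uses only that $\leq$ is a partial order compatible with $+$; cancellativity and primality are not needed for this half.

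For the pre-Riesz property (pR') I would fix strictly positive $x_{1},\dots,x_{n}\in M$ and look at the set $L$ of their common lower bounds, which is nonempty because $0\in L$. If $L=\{0\}$, then $0$ is the greatest element of $L$, i.e. $glb(x_{1},\dots,x_{n})=0$, which is the first alternative of (pR'). If instead there is some $d\in L$ with $d\neq 0$, then $d>0$ because $0$ is the least element, and $d\leq x_{i}$ for each $i$ by the choice of $d$; taking $r=d$ gives the second alternative. This verifies (pR'), so $M$ is a pre-Riesz monoid, and together with the conic statement the corollary follows.

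I do not expect a genuine obstacle here: once one records that $0$ is the least element of a divisibility monoid, both claims are immediate, and the role of the Riesz/primality hypothesis and of Theorem \ref{Theorem QX} is essentially to guarantee the ambient setting (a directed, cancellative, divisibility monoid) in which the statements are being asserted. If one wanted a more structural rendering, one could instead invoke the $(n,m)$-interpolation property of Theorem \ref{Theorem QX} to describe common lower bounds more explicitly, but that is more than the dichotomy in (pR') actually requires.
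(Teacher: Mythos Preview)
Your proof is correct, and it is in fact more direct than the paper's. The key observation you isolate---that in a divisibility monoid $0$ is the least element, so $M=M^{+}$ and every nonzero element is strictly positive---immediately reduces both claims to order-theoretic trivialities: (pR') becomes the tautology ``either $0$ is the only common lower bound, or some common lower bound is nonzero (hence strictly positive),'' and the conic property follows from $x\leq x+y=0\leq x$ and antisymmetry.

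The paper proceeds differently: it invokes the $(2,2)$ interpolation property of Theorem~\ref{Theorem QX} for both assertions. For (pR') it assumes a common lower bound $g$ with $g\nleq 0$ and interpolates $0,g\leq r\leq x,y$ to produce $r>0$; for conicity it interpolates $0,x\leq r\leq x,x+y$ to obtain a contradiction when $x\neq 0$. Your route shows that neither primality nor cancellativity nor interpolation is actually needed here---divisibility alone suffices---so the corollary holds for any divisibility p.o.\ monoid, not just Riesz monoids. The paper's approach, by contrast, stays closer to the Riesz-group template (where $0$ is not a least element and interpolation genuinely does work), at the cost of obscuring how little is really being used.
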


\begin{proof}
Let $0\leq x,y$ in $M$ and suppose that there is $g\in M$ such that $g$ is
not greater than or equal to $0$ yet $g\leq x,y,$ that is $0,g\leq x,y.$
Then by the $(2,2)$ interpolation property there is $r\in M$ such that $%
0,g\leq r\leq x,y.$ But then $r>0,$ as $r\geq 0$ and $r\neq 0$ because $%
r\geq g.$ Next suppose $x,y\geq 0$. If $x+y=0$ and say $x\neq 0,$ then we
have $0,x\leq x,x+y$ and by the $(2,2)$ interpolation there is $r$ such that 
$0<x,x+y$ contradicting the fact that $x+y=0.$
\end{proof}

Well a p.o. monoid $M$ is a p.o. group if every element of $M$ has an
inverse and obviously if a p.o. monoid is a Riesz monoid and a group it is a
Riesz group. This brings up the question: Let $M$ be a Riesz monoid and $%
M^{+}$ the positive cone of it, will $M^{+}$ generate a Riesz group? As we
shall be mostly concerned with monoids $M$ with $0$ the least element, i.e. $%
M=M^{+}$ we remodel the question as: Let $M$ be a Riesz monoid with $M^{+}=M$
the positive cone of it, will $M$ generate a Riesz group? The following
result whose proof was indicated to me by G.M. Bergman, in an email,
provides the answer.

\begin{theorem}
\label{Theorem SX}Suppose $M$ is a cancellative abelian monoid, which is
"conical", i.e., no two nonidentity elements sum to $0$, and which we
partially order by divisibility; and suppose every element of $M$ is primal,
namely, that with respect the divisibility order, (1) $x\leq a+b=>x=u+v$
such that $u\leq a$ and $v\leq b$. Then the group generated by $M$ is a
Riesz group.
\end{theorem}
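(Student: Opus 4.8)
The plan is to realize the group generated by $M$ concretely as the Grothendieck group $G=\{a-b:a,b\in M\}$, with $a-b=c-d$ iff $a+d=b+c$, ordered by $x\le y\iff y-x\in M$, and then to verify the three defining features of a Riesz group: that $(G,\le)$ is a partially ordered abelian group, that it is directed, and that it satisfies the Riesz interpolation property. Since $M$ is cancellative, the canonical map $M\to G$ is injective, so we identify $M$ with the positive cone $G^{+}=\{g\in G:g\ge 0\}$. Reflexivity and transitivity of $\le$ are immediate from $0\in M$ and $M+M\subseteq M$, and translation invariance is clear; the conicality hypothesis is used exactly once, for antisymmetry: if $y-x\in M$ and $x-y\in M$, write $y=x+h$ and $x=y+k$ with $h,k\in M$, cancel to get $h+k=0$, and conclude $h=k=0$, so $x=y$. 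For directedness it suffices (in a po-group) to produce upper bounds: given $x=a-b$ and $y=c-d$ in $G$, the element $a+c\in M\subseteq G$ satisfies $(a+c)-x=b+c\in M$ and $(a+c)-y=a+d\in M$, so $a+c\ge x,y$.

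The heart of the argument is interpolation, and this is where primality enters. Primality of every element of $M$ is exactly the Riesz decomposition property for the positive cone: $x\le y_{1}+y_{2}$ in $M$ forces $x=x_{1}+x_{2}$ with $x_{1}\le y_{1}$, $x_{2}\le y_{2}$, and $x_{1},x_{2}\in M$. To lift this to $G$, take $a_{1},a_{2},b_{1},b_{2}\in G$ with $a_{i}\le b_{j}$ for $i,j\in\{1,2\}$, and set $p_{ij}=b_{j}-a_{i}\in M$. A direct computation gives $p_{11}+p_{22}=p_{12}+p_{21}$ (both equal $b_{1}+b_{2}-a_{1}-a_{2}$), hence $p_{11}\le p_{12}+p_{21}$ in $M$; applying primality to $p_{11}$ yields $p_{11}=q+r$ with $q\le p_{12}$, $r\le p_{21}$, and $q,r\in M$. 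Then $c:=a_{1}+q$ does the job: $c-a_{1}=q\in M$, $b_{1}-c=p_{11}-q=r\in M$, $b_{2}-c=p_{12}-q\in M$, and $c-a_{2}=q+(p_{21}-p_{11})=p_{21}-r\in M$, so $a_{1},a_{2}\le c\le b_{1},b_{2}$. This is the $(2,2)$ interpolation property, and the extension to the general $(n,m)$ form is the same routine induction carried out in the proof of Theorem \ref{Theorem QX}, now performed inside $G$ (whose positive cone is $M$). Therefore $G$ is a directed partially ordered abelian group with the interpolation property, that is, a Riesz group.

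The only genuinely non-bookkeeping step is the interpolation lift in the second paragraph; everything else is checking that the Grothendieck construction behaves as expected, with conicality supplying antisymmetry of the induced order and cancellativity supplying the embedding $M\hookrightarrow G$. It is worth flagging at the outset that ``Riesz group'' is taken to mean a directed partially ordered abelian group satisfying the Riesz interpolation property (equivalently, one whose positive cone has the decomposition property), so that the three verifications above exhaust the definition.
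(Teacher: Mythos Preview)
Your proof is correct and follows essentially the same route as the paper's: both pass to the four differences in $M$ arising from $a_i\le b_j$, observe the relation $p_{11}+p_{22}=p_{12}+p_{21}$, and use primality to split one of these and produce the interpolant $a_1+q$ (the paper's $p+a'$). The only cosmetic difference is that the paper first recasts primality as the Riesz \emph{refinement} statement ($a+b=c+d\Rightarrow$ a $2\times 2$ common refinement) and then applies that, whereas you apply primality directly to the inequality $p_{11}\le p_{12}+p_{21}$; your version also spells out the po-group axioms and directedness (where conicality is used for antisymmetry), which the paper leaves implicit.
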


\begin{proof}
Let us rewrite (1) by translating all the inequalities into their
divisibility statements; so that $x\leq a+b$ becomes $x+y=a+b$ for some $y$
and $u\leq a$ becomes $a=u+u\prime $, and similarly for the last inequality;
and finally, let us rename the elements more systematically; in particular,
using $a,b,c,d$ for the above$x,y,a,b$. Then we find that (1) becomes $%
a+b=c+d\Rightarrow a=a\prime +a",c=a\prime +b\prime ,d=a"+b"$ for some $%
a\prime ,a",b\prime ,b"\in M.$ Now if we substitute the three equations to
the right of the "$\Rightarrow $" into the equation before the "$\Rightarrow 
$", and use cancellativity, we find that $b=b\prime +b"$; so the full
statement is (2) $a+b=c+d\Rightarrow a=a\prime +a",~b=b\prime +b",~c=a\prime
+b\prime ,~d=a"+b",$ for some $a\prime ,a",b\prime ,b"\in M$. Now let $G$ be
the group generated by $M$, ordered so that $M$ is the positive cone. We
want to show $G$ has the Riesz Interpolation property. So suppose that in $G$
we have $p,q\leq r,s$. We can write these inequalities as (3) $%
r=p+a,s=p+c,r=q+d,s=q+b$ where $a,b,c,d\in M$. Now the sum of the first and
last equations gives a formula for $r+s$, and so does the sum of the second
and third equations.Equating the results, and cancelling the summands $p+q$
on each side, we get an equation in $M:a+b=c+d$. Hence we can apply (2) to
get decompositions of $a,b,c,d$, and substitute these into (3), getting (4) $%
r=p+a\prime +a",s=p+a\prime +b\prime ,r=q+a"+b",s=q+b\prime +b"$. Equating
the first and third equations (or if we prefer, the second and fourth) and
cancelling the common term $a"$ (respectively, the common term $b\prime $),
we get (whichever choice we have made) (5) $p+a\prime =q+b"$.The element
given by (5) is clearly $\geq p,q$, while from (4) (using whichever of the
equations for $r$ we prefer and whichever of the equations for $s$ we
prefer), we see that it is $\leq r,s$. So this is the element whose
existence is required for the ($(2,2)$) Riesz interpolation property for $G$.
\end{proof}

A fractional ideal $I$ is called $\ast $-invertible if $(II^{-1})^{\ast }=D.$
It is well known that if $I$ is $\ast $-invertible for a finite character
star operation $\ast $ then $I^{\ast }$ and $I^{-1}$ are of finite type.
Denote the set of all $\ast $-invertible fractional $\ast $-ideals of $D$ by 
$Inv_{\ast }(D)$ and note that given an integral ideal $I$ it is possible
that $I$ cannot always be expressed as a product of \bigskip integral
ideals. So when we talk about an integral $\ast $-invertible $\ast $-ideal
we are talking about the end result and not how it is expressed. Let $%
\mathcal{I}_{\ast }(D)$ be the set of integral $\ast $-invertible $\ast $%
-ideals and note that $\mathcal{I}_{\ast }(D)$ is a monoid under $\ast $%
-multiplication. Note that $\mathcal{I}_{\ast }(D)$ can be partially ordered
by $I\leq J$ if and only if $I\supseteq J$. Indeed $J\subseteq I$ if and
only if $(JI^{-1})^{\ast }=H\subseteq D$, if and only if $J=(IH)^{\ast },$
and as $J,$ $I$ are $\ast $-invertible, $H$ is $\ast $-invertible and
integral. Thus in $\mathcal{I}_{\ast }(D)$, $I\leq J$ $\Leftrightarrow
J=(IH)^{\ast }$ for some $H\in \mathcal{I}_{\ast }(D).$ In other words $%
\mathcal{I}_{\ast }(D)$ is a divisibility p.o. monoid. Because $\mathcal{I}%
_{\ast }(D)$ involves only $\ast $-invertible $\ast $-ideals, it is
cancellative too. Finally $\mathcal{I}_{\ast }(D)$ is directed because of
the definition of order. That $Inv_{\ast }(D)$ is generated by $\mathcal{I}%
_{\ast }(D)$ follows from the fact that every fractionary ideal of $D$ can
be written in the form $A/d$ where $A\in F(D)$ and $d\in D\backslash \{0\}.$
Finally, the partial order in $Inv_{\ast }(D)$ gets induced by $\mathcal{I}%
_{\ast }(D)$ in that for $I,J\in $ $Inv_{\ast }(D)$ we have $I\leq J$ $%
\Leftrightarrow J\subseteq I\Leftrightarrow (JI^{-1})^{\ast }\in $ $\mathcal{%
I}_{\ast }(D).$ Call $I\in \mathcal{I}_{\ast }(D)$ $\ast $-primal if for all 
$J,K\in \mathcal{I}_{\ast }(D)$ $I\leq (JK)^{\ast }$ we have $%
I=(I_{1}I_{2})^{\ast }$ where $I_{1}^{\ast }\leq J$ and $I_{2}^{\ast }\leq
K. $ Call $D$ $\ast $-Schreier, for star operation $\ast $ of finite
character, if every integral $\ast $-invertible $\ast $-ideal of $D$ is $%
\ast $-primal.

\begin{proposition}
\label{Proposition TX}Let $\ast $ be a finite character star operation
defined on $D.$ Then $D$ is a $\ast $-Schreier domain if and only if $%
Inv_{\ast }(D)$ is a Riesz group under $\ast $-multiplication and order
defined by $A\leq B\Leftrightarrow A\supseteq B.$
\end{proposition}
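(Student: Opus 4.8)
The plan is to recognize that the statement is just the combination of Theorem~\ref{Theorem SX} and Theorem~\ref{Theorem QX}, applied to the monoid $\mathcal{I}_{\ast}(D)$ of integral $\ast$-invertible $\ast$-ideals. Recall from the discussion preceding the proposition that $\mathcal{I}_{\ast}(D)$ is a commutative cancellative monoid under $\ast$-multiplication; that it is conic (if $(IJ)^{\ast}=D$ with $I,J$ integral, then $I=I^{\ast}\supseteq (IJ)^{\ast}=D$ forces $I=D$, and likewise $J=D$); that it is a divisibility monoid for the reverse-containment order $I\le J\Leftrightarrow J=(IH)^{\ast}$ for some $H\in\mathcal{I}_{\ast}(D)$; and that the group it generates, with the induced order, is exactly $Inv_{\ast}(D)$ ordered by $A\le B\Leftrightarrow A\supseteq B$. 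Moreover, unwinding the definitions, $D$ is $\ast$-Schreier precisely when every element of $\mathcal{I}_{\ast}(D)$ is primal in the divisibility order: the hypothesis $I\le (JK)^{\ast}$ is the divisibility statement ``$I$ divides $(JK)^{\ast}$'', and the conclusion $I=(I_{1}I_{2})^{\ast}$ with $I_{1}^{\ast}\le J$, $I_{2}^{\ast}\le K$ is the corresponding decomposition inside $\mathcal{I}_{\ast}(D)$.

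With this dictionary the forward implication is immediate: if $D$ is $\ast$-Schreier, then every element of the cancellative conic divisibility monoid $\mathcal{I}_{\ast}(D)$ is primal, so Theorem~\ref{Theorem SX} applies and says that the group generated by $\mathcal{I}_{\ast}(D)$, namely $Inv_{\ast}(D)$, is a Riesz group, with positive cone $\mathcal{I}_{\ast}(D)$ and the order induced from it, i.e.\ reverse containment.

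For the converse I would argue as follows. Suppose $Inv_{\ast}(D)$ is a Riesz group. Then it has the $(2,2)$ Riesz interpolation property; restricting to four elements of the positive cone $\mathcal{I}_{\ast}(D)=Inv_{\ast}(D)^{+}$ and observing that any interpolating element $z$ with $a,b\le z$ satisfies $z\ge a\ge 0$, hence lies in $\mathcal{I}_{\ast}(D)$, we conclude that the monoid $\mathcal{I}_{\ast}(D)$ itself satisfies condition~(2) of Theorem~\ref{Theorem QX}. Since $\mathcal{I}_{\ast}(D)$ is a commutative cancellation divisibility monoid, Theorem~\ref{Theorem QX} then yields that every element of $\mathcal{I}_{\ast}(D)$ is primal, i.e.\ $D$ is $\ast$-Schreier.

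I do not expect a serious obstacle: the substantive work is already in Theorems~\ref{Theorem QX} and~\ref{Theorem SX}, and the fact that $\mathcal{I}_{\ast}(D)$ is a cancellative conic divisibility monoid with group of quotients $Inv_{\ast}(D)$ was established just before the statement. The one point deserving a careful sentence or two is the translation dictionary above, specifically checking that ``$\ast$-primal'' in the ideal-theoretic formulation (where $I_{1},I_{2}$ are a priori allowed to be fractional) coincides with ``primal element of $\mathcal{I}_{\ast}(D)$'' (where the factors are integral $\ast$-invertible $\ast$-ideals): one normalizes $I_{1},I_{2}$ to $\ast$-ideals and uses that $(I_{1}I_{2})^{\ast}=I$ is integral together with $I_{1}\supseteq J$ and $I_{2}\supseteq K$ to replace them by integral factors. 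Once that is pinned down, both implications are one-line applications of the two theorems.
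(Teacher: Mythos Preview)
Your proposal is correct and follows essentially the same route as the paper: the forward direction is exactly the paper's---verify $\mathcal{I}_{\ast}(D)$ is cancellative, conic, divisibility-ordered, and apply Theorem~\ref{Theorem SX}. For the converse the paper simply observes that $\mathcal{I}_{\ast}(D)$ is the positive cone of the Riesz group $Inv_{\ast}(D)$ and hence every element is primal; your route through Theorem~\ref{Theorem QX} (restricting the $(2,2)$ interpolation to the positive cone) makes that same step explicit, so the two arguments coincide.
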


\begin{proof}
Suppose that $D$ is $\ast $-Schreier, as defined above. That is each $I\in $ 
$\mathcal{I}_{\ast }(D)$ is primal. The notion of $\ast $-Schreier suggests
that we define $\leq $ by $A\leq B$ $\Leftrightarrow A\supseteq B.$ Then as
for each pair of integral ideals $IJ$, $(IJ)^{\ast }=D$ $\Rightarrow J^{\ast
}=I^{\ast }=D,$ the same holds for members of $\mathcal{I}_{\ast }(D)$ which
are all $\ast $-ideals. So $(IJ)^{\ast }=D\Rightarrow I=J=D.$ and so $%
\mathcal{I}_{\ast }(D)$ is conic. Of course $\mathcal{I}_{\ast }(D)$ is
cancellative by the choice of ideals and by the definition of ordr $\mathcal{%
I}_{\ast }(D)$ is a divisibility monoid. So by Theorem \ref{Theorem SX} $%
\mathcal{I}_{\ast }(D)$ generates a Riesz group and by the above
considerations $Inv_{\ast }(D)$ is generated by $\mathcal{I}_{\ast }(D).$
Consequently $Inv_{\ast }(D)$ is a Riesz group. Conversely if $Inv_{\ast
}(D) $ is a Riesz group, with that order defined on it, then $\mathcal{I}%
_{\ast }(D)$ is the positive cone of the Riesz group $Inv_{\ast }(D)$ and so
each element of $\mathcal{I}_{\ast }(D)$ must be primal.
\end{proof}

Proposition \ref{Proposition TX} brings together a number of notions studied
at different times. The first was quasi-Schreier, study started in \cite{DM
2003} and completed in \cite{ADZ 2007}. The target in these papers was
studying $\mathcal{I}_{d}(D),$ i.e. the monoid of invertible integral ideals
of $D,$ when $Inv_{\ast }(D)$ is a Riesz group. Another study targeting $%
\mathcal{I}_{t}(D),$ i.e. the monoid of $t$-invertible integral $t$-ideals
of $D,$ for study along the same lines as above appeared in \cite{DZ 2011}.

Now let's step back and require that every $\ast $-invertible $\ast $-ideal
of $D$ be principal. Then in Proposition \ref{Proposition TX}, $\mathcal{I}%
_{\ast }(D)$ is the monoid of principal ideals, each of which is primal and
the Riesz group $Inv_{\ast }(D)$ of consists just of principal fractional
ideals of $D$, and hence the group of divisibility of $D$. It is well known
that if $\ast $ is of finite type each $\ast $-invertible $\ast $-ideal is a 
$t$-invertible $t$-ideal (\cite{Zaf 2000}) and that in a pre-Schreier domain
each $t$-invertible $t$-ideal is principal (\cite[Theorem 3.6]{Zaf 1987}).
So we have the following corollary.

\begin{corollary}
\label{Corollary VX}Let $D$ be $\ast $-Schreier for any star operation $\ast 
$ of finite character. Then $D$ is pre-Schreier if and only if each element
of $\mathcal{I}_{\ast }(D)$ is principal.
\end{corollary}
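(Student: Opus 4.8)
The plan is to prove the two implications separately; the forward direction is essentially immediate from the facts recalled just before the statement, and the converse is where Proposition~\ref{Proposition TX} does the work. For the forward implication, suppose $D$ is pre-Schreier and take $I\in\mathcal{I}_{\ast}(D)$. Since $\ast$ has finite character, $I$ is a $t$-invertible $t$-ideal (every $\ast$-invertible $\ast$-ideal is $t$-invertible, by \cite{Zaf 2000}), and in a pre-Schreier domain every $t$-invertible $t$-ideal is principal by \cite[Theorem 3.6]{Zaf 1987}. Hence $I$ is principal. I would remark that this half does not even invoke the standing hypothesis that $D$ is $\ast$-Schreier.

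For the converse, assume $D$ is $\ast$-Schreier and that every element of $\mathcal{I}_{\ast}(D)$ is principal. By Proposition~\ref{Proposition TX}, $Inv_{\ast}(D)$ is a Riesz group under $\ast$-multiplication and the order $A\le B\Leftrightarrow A\supseteq B$. Now I would observe that under our hypothesis $Inv_{\ast}(D)$ \emph{is} the group of divisibility $G(D)$ as an ordered group: given $J\in Inv_{\ast}(D)$, choose $d\in D\setminus\{0\}$ with $dJ\subseteq D$; then $dJ$ is an integral $\ast$-invertible $\ast$-ideal, hence lies in $\mathcal{I}_{\ast}(D)$, hence is principal, say $dJ=aD$, so $J=(a/d)D$ is principal. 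Conversely every nonzero principal fractional ideal is $\ast$-invertible, so $Inv_{\ast}(D)$ coincides with the set of nonzero principal fractional ideals, i.e.\ with $G(D)$, and the order restricted from reverse containment is exactly the divisibility order; moreover $\mathcal{I}_{\ast}(D)$ is then identified with $m(D)$, the monoid of nonzero principal integral ideals. Thus $G(D)$ is a Riesz group.

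It remains to deduce from ``$G(D)$ is a Riesz group'' that $D$ is pre-Schreier, which is the familiar dictionary between primality of elements of $D$ and the Riesz decomposition property on $G(D)^{+}=m(D)$: for $c,a,b\in D\setminus\{0\}$ the relation $c\mid ab$ is $cD\le aD+bD$ in $m(D)$, and a factorization $c=uv$ with $u\mid a$, $v\mid b$ is precisely a decomposition $cD=uD+vD$ with $uD\le aD$ and $vD\le bD$. Since a Riesz group has the Riesz decomposition property on its positive cone, every element of $m(D)$ is primal, i.e.\ every nonzero element of $D$ is primal, which is the definition of $D$ being pre-Schreier. I would also note in passing that this same translation, combined with Theorem~\ref{Theorem SX} applied to the cancellative conical divisibility monoid $m(D)$, gives an alternative route to the forward implication.

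The only external inputs are the two facts cited in the forward direction; the single point that needs a line of care — and the one I would flag as the main (mild) obstacle — is the identification of $Inv_{\ast}(D)$ with $G(D)$ once all $\ast$-invertible $\ast$-ideals are principal, together with the bookkeeping that the Riesz-group structure on $Inv_{\ast}(D)$ transports to the primality statement for elements of $D$; both are routine but must be spelled out rather than asserted.
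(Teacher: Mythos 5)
Your proof is correct, but your converse direction takes a genuinely different (and longer) route than the paper's. The paper argues the converse directly from the definition of $\ast $-Schreier: for $a\mid bc$ one has $aD\leq (bD\cdot cD)^{\ast }$ in $\mathcal{I}_{\ast }(D)$, so $\ast $-primality of $aD$ gives $aD=(I_{1}I_{2})^{\ast }$ with $I_{1}\leq bD$ and $I_{2}\leq cD$; by hypothesis each $I_{i}=a_{i}D$, whence $a=a_{1}a_{2}$ (up to a unit) with $a_{1}\mid b$ and $a_{2}\mid c$ --- that is the whole argument, with no appeal to Proposition \ref{Proposition TX} or to $G(D)$. You instead invoke Proposition \ref{Proposition TX} to get that $Inv_{\ast }(D)$ is a Riesz group, identify $Inv_{\ast }(D)$ with the group of divisibility $G(D)$ once all members of $\mathcal{I}_{\ast }(D)$ are principal (your identification via $dJ\in \mathcal{I}_{\ast }(D)$ is correct, and $\ast $-multiplication does restrict to ordinary multiplication on principal ideals), and then translate interpolation back into primality of elements, which is the $(2)\Rightarrow (1)$ direction of Theorem \ref{Theorem QX} applied to $m(D)$. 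This buys a cleaner conceptual picture --- under the hypotheses, $\ast $-Schreier collapses onto the classical ``$G(D)$ is a Riesz group'' characterization of pre-Schreier --- at the cost of routing through Theorem \ref{Theorem SX} via Proposition \ref{Proposition TX} where a one-line unwinding of $\ast $-primality suffices. Your forward direction is exactly the paper's, and your observation that it does not use the $\ast $-Schreier hypothesis is accurate. One small caution: your closing aside that Theorem \ref{Theorem SX} applied to $m(D)$ gives ``an alternative route to the forward implication'' does not work as stated, since knowing that $G(D)$ is a Riesz group does not by itself show that $\ast $-invertible $\ast $-ideals are principal; that step genuinely needs \cite[Theorem 3.6]{Zaf 1987}, so keep that citation.
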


\begin{proof}
Suppose that eah member of $\mathcal{I}_{\ast }(D)$ is principal then in $%
\mathcal{I}_{\ast }(D)$. Then for $a,b,c\in D\backslash \{0\}$ we have $%
aD,bD,cD\in \mathcal{I}_{\ast }(D)$ and for $a|bc$ in $D$ would be $aD\leq
bDcD$ and in $\mathcal{I}_{\ast }(D)$ we must have $aD=(I_{1}I_{2})^{\ast }$
where $I_{1}\leq bD$ and $I_{2}\leq cD.$ But $I_{i}$ being in $\mathcal{I}%
_{\ast }(D)$ must be principal. So,say, $I_{i}=a_{i}D$. But this gives $%
a=a_{1}a_{2}$ and $a_{1}D\leq bD,a_{2}D\leq cD$ gives $a_{1}|b,a_{2}|c.$ In
sum for all $a,b,c\in D\backslash \{0\}$ $a|bc$ $\Rightarrow a=a_{1}a_{2}$
where $a_{1}|b$ and $a_{2}|c$ which is a way of saying that every nonzero
element of $D$ is primal. Conversely as indicated earlier $D$ being
pre-Schreier makes each $\ast $-invertible $\ast $-ideal of $D$ principal
and consequently all members of $\mathcal{I}_{\ast }(D)$ principal.
\end{proof}

This brings us to the last item on the \textquotedblleft
agenda\textquotedblright . In 1998, Professor Halter-Koch wrote a book, \cite%
{H-K 1998} and restated all the then known conepts of multiplicative ideal
theory for monoids, in terms of ideal systems, except for one, he did not
include a Schreier monoid nor a pre-Schreier monoid. Provided below is one
of the missing definitions.

\begin{definition}
\label{Definition WX}A conic, cancellative divisibility monoid $<M,\bullet
,1,\leq >$is a pre-Schreier or a Riesz monoid if every element of $M$ is
primal.
\end{definition}

To end it all let's note, as Professor Halter-Koch would have, that an
integral domain $D$ all nonzero elements of whose multiplicative monoid are
primal is pre-Schreier if $\leq $ is replaced by $|$.

\begin{acknowledgement}
I am grateful to all who I learned from, from Mathematics to painful hard
facts of life. My special thanks go to Professor G.M. Bergman. He often
reminded me of Paul Cohn and has been equally kind. (I have often written
such thanks to him, that disappeared in the final joint versions.)
\end{acknowledgement}

\bigskip

\

\end{document}